\documentclass{article}

\usepackage{arxiv}
 \usepackage{xcolor}
\usepackage[utf8]{inputenc} % allow utf-8 input
\usepackage[T1]{fontenc}    % use 8-bit T1 fonts
\usepackage{hyperref,bbm}       % hyperlinks
\usepackage{url}            % simple URL typesetting
\usepackage{booktabs}       % professional-quality tables
\usepackage{amsfonts}       % blackboard math symbols
\usepackage{amssymb}        % for \leqslant and other symbols
\usepackage{nicefrac}       % compact symbols for 1/2, etc.
\usepackage{microtype}      % microtypography
\usepackage{lipsum}
\usepackage{amsmath}
\usepackage{amsthm}
\usepackage{appendix}
\usepackage{indentfirst}

\numberwithin{equation}{section}
\usepackage{graphicx}
\numberwithin{equation}{section}

\newtheorem{theorem}{Theorem}[section]
\newtheorem{proposition}[theorem]{Proposition}
\newtheorem{remark}[theorem]{Remark}

\newtheorem{lemma}[theorem]{Lemma}
\newtheorem{corollary}[theorem]{Corollary}

\graphicspath{ {./images/} }

\title{Scaling limit of 1+1 dimensional directed polymer with power-law tail and spatial correlated noise }

\author{
 Junjie Cao \\
  Beijing Normal-Hong Kong Baptist University\\
  \texttt{caojunjie@bnbu.edu.cn} \\
   \And
Guanglin Rang \\
  Wuhan University\\
  \texttt{glrang.math@whu.edu.cn} \\
  \And
  \\
}

\begin{document}
\maketitle
\begin{abstract}
We study a $(1+1)$-dimensional directed polymer in a spatially correlated random environment generated by power-law tail variables:
    $\omega(i,x)=\sum_{y\in\mathbb Z}\psi_{y-x}\xi(i,y), \psi_y\sim \lambda_r |y|^{-r}, r\in(1/2,1)$,
where the variables $\xi(i,y)$ are i.i.d. and have a regularly varying right tail with exponent $\alpha>2$.
The spatial covariance of the environment has long-range decay with Hurst parameter
    $H=\frac32-r\in(1/2,1)$.
We identify the limiting fluctuations of the log-partition function in the intermediate disorder regime and show that the critical tail exponent is
    $\alpha_c=\frac{3}{H}=\frac{6}{3-2r}$.
When $\alpha>\alpha_c$, the model has the same scaling limits as the corresponding Gaussian spatially correlated polymer: if $\beta_NN^{H/2}\to\beta\in(0,\infty)$, the centered log-partition function converges to the logarithm of the solution of the stochastic heat equation driven by fractional spatial noise; if $\beta_NN^{H/2}\to0$, its normalized fluctuation converges to a centered Gaussian law. In the regime $2<\alpha\le\alpha_c$, at the scale $\beta_NN^{H/2}=\beta N^{H/2}/l(N^{3/2})$, the log-partition function still satisfies Gaussian fluctuation. The main ingredient is a truncation comparison argument adapted to long-range moving-average environments, together with an invariance principle for polynomial chaos. Due to the non-locality of the environments, we perform a far-near field analysis, as well as multiscale analysis, to prove that the truncated version does not change the log-partition function at the corresponding scales.
\end{abstract}

% keywords can be removed
%\keywords{First keyword \and Second keyword \and More}

\section{Introduction}

\subsection{Setup of the model}

 Directed polymers are stochastic models describing the trajectory of a polymer chain in a random environment, typically defined on a lattice. From a mathematical perspective, such a polymer can be represented as a directed random walk path \(S = (S_1, S_2, \dots, S_N)\) starting from the origin in \(\mathbb{Z}^d\), where each step is influenced by a disorder \(\omega(i, S_i)\). We denote \(\mathcal{S}_0^N = \{S \mid (S_0, S_1, S_2, \dots, S_N),\;\; S_0 = 0\}\). Here, \(\omega = \{\omega(i, x), (i, x) \in \mathbb{N} \times \mathbb{Z}^d\}\) is a family of real-valued random variables representing the disordered environment. In the following, \(\mathbb{P}_N\) and \(\mathbb{Q}\) denote the probability measures associated with the random walk \(S\) and the environmental variables \(\omega\), respectively, and \(\mathbb{E}_{\mathbb{P}_N}\), \(\mathbb{E}_{\mathbb{Q}}\) denote the corresponding expectations.
For a fixed environment \(\omega\), the \(N\)-step energy of a path \(S\) is given by  
\[
H_N^\omega(S) = \sum_{i=1}^N \omega(i, S_i),
\]  
and the associated random polymer measure is defined via the Gibbs form  
\[
\mathbb{P}_{N,\beta}^{\omega}(S) = \frac{1}{Z_N(\beta, \omega)} e^{\beta H_N^\omega(S)} \mathbb{P}_N(S),
\]  
where \(\beta\) is the inverse temperature, and the partition function is defined as  
\begin{equation}
Z_N(\beta, \omega) = \sum_{S \in \mathcal{S}_0^N} e^{\beta H_N^\omega(S)} \mathbb{P}_N(S) = \mathbb{E}_{\mathbb{P}_N}\left(e^{\beta H_N^\omega(S)}\right) \label{partition function}.
\end{equation}
Moreover, for any \(x \in \mathbb{Z}^d\), the partition function from the origin to \(x\) is defined as  
\begin{equation}
Z_N(\beta, \omega; x) = \sum_{S \in \mathcal{S}_0^N} e^{\beta H_N^\omega(S)} \mathbb{P}_N(S \mid S_N = x) = \mathbb{E}_{\mathbb{P}_N}\left(e^{\beta H_N^\omega(S)} \mathbbm {1}_{\{S_N = x\}}\right).
\end{equation}

As a fundamental model in statistical physics and probability theory, directed polymer in random environment (DPRE for short) has been greatly  developed over the past twenty years. The investigation of this model    aims at to look for new universal exponents which encodes some important information characterizing how a pure entropy behavior can be effected by the disorders,  especially by strong disorders. Although there has a widely accepted conjecture on the exponents, very few examples can confirm its validity in dimension $1+1$. Meanwhile, a rich variety of different regimes, which is called intermediate disorder regimes,  exists between the weak and strong disorder by
scaling the inverse temperature with the length of the
polymer $N$. For more details see \cite{2014AKQ1, 2010Intermediate,Caravenna2017}. It is in these regimes that one can establish the convergence of the DPRE to the Stochastic Heat Equation (SHE) and (via the
Cole-Hopf transformation) of the Kardar-Parisi-Zhang (KPZ) equation or Gaussian fluctuations for $d=1$ in \cite{2014AKQ1} and for $d=2$ in \cite{2023CFS}. 

These discoveries are established under the assumption of the existence of exponential moments of $\omega$. Dey and Zygours in \cite{DZ16} showed that the exponential moments condition can be relaxed to the heavy-tailed law case, which verifies the earlier conjectures (\cite{2014AKQ1}) that the six moments is needed for the intermediate scaling limit of the partition function. Berger and Lacoin in \cite{2021BerLac} considered the same scaling limit of the partition function of product form when the environmental variables admitting power-law tail distributions.  In this paper, we shall keep to explore this topic in the case where the environmental variables $\omega$
are spatially correlated, in contrast to the independent assumption adopted in previous studies. Correlated environmental variables in DPRE were first investigated in \cite{1989MHKZ}, where numerical simulations were performed to extract the critical exponents for the correlated case.
 Since then more and more correlated models are investigated by mathematicians and physicists. Lacoin in \cite{2011Lacoin} studies the influence of disorder with correlation to Brownian motion; Chu and Kardar in \cite{PhysRevE2016} simulate the distribution of the free energy of DPRE in the presence of spatially correlated noise.
 Shen et al in \cite{SHEN2021} studies the scaling limit of random walk in the occupation field, which exhibits a long-range correlation with respect to time-space points, given by a Poisson system of independent random walks on $\mathbb Z$. 
 The moving average time-space fields also are used as the environmental variables in DPRE. The corresponding scaling limit are considered in \cite{RANG20203408} for simple symmetrical random walk in spatially correlated environment, in \cite{GC23} for long-range random walk in the same environment as in \cite{RANG20203408} and in \cite{2024RWS} for the same random walk as in \cite{GC23} but with time correlated spatially independent Gaussian environment, respectively. Berger and his collaborators did a series works on the DPRE in correlated environment, see references cited in \cite{BerLeg2024}, where  a comprehensive investigation was implemented on a generalized Poland-Scheraga model.

Here,  we investigate the limiting behavior of the log-partition function in the setting where the environment is temporally independent but exhibits long-range spatial correlations, with the polymer displaying diffusive behavior, i.e., \(\nu = \frac{1}{2}\). As the tail probability of the environmental variables increases, the limiting behavior ranges from the Hopf-Cole solution of the generalized KPZ equation to a normal distribution. Before formally stating the theorems, we first introduce some assumptions and notations.

\textbf{(A)} The environment \( \omega = \omega(i, x), (i, x) \in \mathbb{N} \times \mathbb{Z} \) is taken to be of the form:

\begin{equation}
\omega(i, x) =
\begin{cases} 
\displaystyle \sum_{y=-\infty}^{\infty} \psi_{y-x} \xi(i, y) & r \in (1/2, 1), \\[1.2em]
\xi(i, x) & r = 1  
\end{cases}
\label{omega}
\end{equation}
where \( \psi_y > 0, \psi_y \sim \lambda_r |y|^{-r}, \lambda_r > 0 \), and \( \{ \xi(i, x); (i, x) \in \mathbb{N} \times \mathbb{Z} \} \) is a family of independent and identically distributed random variables.

\begin{remark}
  Here we only consider the case \( r \in (1/2, 1) \) fixed throughout the paper. For consistency with classical results in the case of i.i.d. environment variables \cite{DZ16}, we extend the definition to \( r = 1 \).
\end{remark} 

We assume that the cumulative distribution function \( F(x) := \mathbb{Q}(\xi \leq x) \) has a regularly varying right tail. In other words, we suppose:

$$
\bar{F}(x) := 1 - F(x) = x^{-\alpha} L(x) \quad \forall \, x > 0,
$$
for some \( \alpha > 0 \) and some slowly varying function \( L(x) \) (a slowly varying function satisfies \( \frac{L(tx)}{L(x)} \to 1 \) as \( x \to \infty \) for any \( t > 0 \)), with the requirement of \( \alpha r > 1 \). Moreover, \( L(\cdot) \) is chosen such that \( \bar{F}(x) \) remains bounded, thereby defining a proper cumulative distribution function. And, the left tail of the cumulative distribution function \( F(x) \) is bounded by the right tail.

\textbf{(B)} When \( \alpha\textgreater2 \), we require the random variables to satisfy:

$$
\mathbb{E}_\mathbb{Q}(\xi) = 0, \quad \mathbb{E}_\mathbb{Q}(\xi^2) = 1.
$$
For any \( r \in (\frac{1}{2}, 1) \), we have (see \cite{HOSKING1996261} for details)

\begin{equation}
\mathbb{E_Q}[\omega(i, x)\omega(j, z)] = \delta_{ij} \gamma(x - z) = \delta_{ij} \sum_{y=-\infty}^{\infty} \psi_y \psi_{y+x-z},   \label{gamma}
\end{equation}
where \( \delta_{i,j} \) is the Kronecker delta, and \( \gamma(k) \sim \lambda_r |k|^{1-2r} \) for sufficiently large integers \( k \); here, \( \lambda_r = c_r^2 \frac{\Gamma(2r-1)\Gamma(1-r)}{\Gamma(r)} \).

\begin{remark}
    Throughout this paper, the two parameters $r\in(1/2,1)$, \( H = 3/2 - r\in (1/2,1) \)  are fixed unless otherwise stated. Actually, $H$, called the Hurst exponent, corresponds to a fractional Gaussian field. The parameter \( c_r \)
  can be chosen so that \( \lambda_r = H(2H - 1) \) holds. Consequently, we have

\begin{equation}
\gamma([z]) \sim K(z) \quad \text{as } |z| \to \infty,  \label{gamma1}
\end{equation}
where \( [z] \) denotes the integer part of \( z \), and the function \( K \) is defined as

\begin{equation}
K(z) = H(2H - 1)|z|^{2H-2}. \label{gamma2}
\end{equation}
\end{remark}
For \( t > 1 \), define

$$
l(t) := \inf \{ x \mid \bar{F}(x) \leq 1/t \},
$$
From the definition of \( l(t) \), we have \( l(t) = t^{1/\alpha} L_0(t) \) as \( t \to \infty \), where \( L_0(t) \) is some slowly varying function. For any \( x \in \mathbb{R} \), define \( x_+ = \max \{ x, 0 \} \), \( x_- = \max \{ -x, 0 \} \). 

Now, under the above assumptions we have the two main results of this paper:

\begin{theorem}
\label{thm:main_log_partition}
Assume that conditions \textbf{(A)} and \textbf{(B)} hold. Let the tail exponent satisfy $\alpha > \frac{6}{3-2r}$. For a sequence of inverse temperatures $\beta_N$,  the log-partition function $\ln Z_N(\beta_N; \omega)$ exhibits the following two limit behaviors:

%\medskip
\noindent
\textbf{(a)} 
If $\beta_N N^{H/2} \to \beta \in (0, \infty)$ as $N \to \infty$, then the log-partition function centered by its multi-order polynomial drift converges in distribution to the logarithm of the continuous Wiener chaos:
$$
    \ln Z_N(\beta_N; \omega) - N \sum_{j=2}^{\lfloor 2/H \rfloor} \frac{\beta_N^j}{j!} \mathbb{E}_{\mathbb{Q}}[\bar{\xi}^j] \sum_{y=-\infty}^{\infty} \psi_y^j \stackrel{D}{\Longrightarrow} \ln Z_{\sqrt{2}\beta}(1, \cdot),
$$
where $Z_{\sqrt{2}\beta}(t, \cdot)$ is the unique solution to the following stochastic heat equation (SHE)
$$
    \partial_t u = \frac{1}{2} \Delta u + \sqrt{2}\beta u \dot{W}, \quad u(0,x) = \delta_0(x),
$$
driven by a centered Gaussian space-time noise $\dot{W}$ with covariance structure
$$
    \mathbb{E}_{\mathbb{Q}}[\dot{W}(t,x) \dot{W}(s,y)] = H(2H-1)\delta(t-s) |x-y|^{2H-2}.
$$

%\medskip
\noindent
\textbf{(b)} 
Let $\beta_N N^{H/2} \to 0$ as $N \to \infty$, and there exists a constant $a$ with $0 < a < A_{*}(\alpha, H)$, which is defined in \eqref{A_alpha_H}, such that 
\[
\beta_N N^{H/2} \gg N^{-a}.
\]
Then the normalized fluctuation converges to a Gaussian distribution:
$$
    \frac{1}{\beta_N N^{H/2}} \left( \ln Z_N(\beta_N; \omega) - N \sum_{j=2}^{p'} \frac{\beta_N^j}{j!} \mathbb{E}_{\mathbb{Q}}[\bar{\xi}^j] \sum_{y=-\infty}^{\infty} \psi_y^j \right) \stackrel{D}{\longrightarrow} \mathcal{N}(0, \sigma_H^2),
$$
where $p'=\left\lfloor\frac2H\right\rfloor$. \(\text{More generally, \(p'\) can be replaced by any integer }
p\le \lfloor2/H\rfloor
\text{ such that }
\frac{N\beta_N^{p+1}}{\beta_NN^{H/2}}\to0.\) The variance is given by
$$
    \sigma_H^2 = 2H(2H-1) \int_0^1 \int_{\mathbb{R}^2} \rho(t,x)\rho(t,y)|x-y|^{2H-2} \,dx \,dy \,dt,
$$
with $\rho(t,x)$ being the standard heat kernel.
\end{theorem}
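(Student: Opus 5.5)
The plan is the truncation-comparison scheme of Dey--Zygouras and Berger--Lacoin, adapted to the moving-average environment, followed by an invariance principle for polynomial chaos.

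\textbf{Step 1: truncation.} Fix a level $\varepsilon_N$ with $\beta_N\varepsilon_N\to0$ but $\varepsilon_N\gg l(N^{3/2})^{-1}\cdot$(polylog), and split $\xi=\bar\xi+\hat\xi$, where $\bar\xi=\xi\mathbbm 1_{\{|\xi|\le \varepsilon_N/\beta_N\}}-\mathbb E[\cdots]$. The number of space-time sites $(i,y)$ that matter is of order $N\cdot N^{1/2}=N^{3/2}$. Hence when $\alpha>\alpha_c$, with probability tending to one no $\xi(i,y)$ in the diffusive box $[1,N]\times[-CN^{1/2}\log N,CN^{1/2}\log N]$ exceeds $\varepsilon_N/\beta_N$. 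This uses $l(N^{3/2})\beta_N\approx N^{3/(2\alpha)-H/2}\to0$, which is exactly the condition $\alpha>3/H$. The environment $\omega$ is non-local, however: large $\xi(i,y)$ far from the box still contribute through $\psi_{y-x}\sim|y-x|^{-r}$. I would therefore do a \emph{far--near field} split. Near-field sites are handled by the union bound above. For the far field I would use a dyadic multiscale decomposition $|y|\in[2^kN^{1/2},2^{k+1}N^{1/2})$. On each shell the effective coefficient is $\beta_N 2^{-kr}N^{-r/2}$, and the number of sites is $N2^kN^{1/2}$. The truncation error is bounded in $L^1$ or $L^2$ by $\beta_N\sum_k\mathbb E|\hat\xi|\,N2^{k(1-r)}\cdots$, and this is summable since $\alpha r>1$ gives integrability of the heavy tail against $|y|^{-r}$. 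The conclusion is $\ln Z_N(\beta_N,\omega)-\ln Z_N(\beta_N,\bar\omega)\to0$ in probability, where $\bar\omega=\sum_y\psi_{y-x}\bar\xi(i,y)$. This step also fixes the admissible range of $a<A_*(\alpha,H)$ in part (b).

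\textbf{Step 2: drift and chaos expansion for the truncated field.} Since $\bar\xi$ is bounded by $\varepsilon_N/\beta_N$, all exponential moments of $\beta_N\bar\omega$ exist. I would write $e^{\beta_N\bar\omega(i,x)}=\mathbb E[e^{\beta_N\bar\omega}](1+\eta_N(i,x))$. A cumulant expansion gives
\[
\ln\mathbb E e^{\beta_N\bar\omega}=\sum_{j\ge2}\frac{\beta_N^j}{j!}\mathbb E[\bar\xi^j]\sum_y\psi_y^j+\cdots,
\]
where the mixed cumulant terms are of lower order. Multiplying by $N$, the terms with $j>2/H$ are $o(1)$ because $N\beta_N^j\to0$. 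This produces the centering in (a) and (b); the variant with $p$ in (b) is the same estimate relative to $\beta_NN^{H/2}$. Then $Z_N/\text{drift}=\sum_k\sum_{i_1<\dots<i_k}\mathbb E_{\mathbb P_N}\prod\eta_N(i_j,S_{i_j})$. I would reexpand $\eta_N$ linearly in the independent $\bar\xi(i,y)$ through the kernel $\psi$, up to higher-order remainders controlled in $L^2$. This yields a polynomial chaos in the i.i.d. variables $\bar\xi(i,y)$ with kernel $\beta_N\sum_x\psi_{y-x}p_i(x)\cdots$.

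\textbf{Step 3: limits.} For (a), I would apply the invariance principle for polynomial chaos (Mossel--O'Donnell--Oleszkiewicz, as in Caravenna--Sun--Zygouras). It needs a uniform $L^2$ tail bound over chaos order $k$ and vanishing maximal influence. The influence bound follows because $\beta_N\sum_x\psi_{y-x}p_i(x)\to0$ uniformly, and the $L^2$ tail bound comes from the bound $\gamma(k)\lesssim|k|^{2H-2}$ together with local limit theorem estimates. The bound is summable since the two-replica intersection functional $\beta^2\int\!\!\int K(B_t-B'_t)\,dt$ has all exponential moments for $H<1$. This lets me replace $\bar\xi$ by Gaussians, and the Gaussian correlated polymer converges to the Wiener chaos for the SHE with covariance $H(2H-1)|x-y|^{2H-2}$. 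The factor $\sqrt2$ comes from the normalization of the simple random walk, namely its periodicity and lattice variance. Positivity of $Z_{\sqrt2\beta}(1,\cdot)$ then gives convergence of the logarithm. For (b), the first chaos dominates. Its variance equals $\beta_N^2\sum_i\sum_{x,z}p_i(x)p_i(z)\gamma(x-z)\sim\beta_N^2N^{H}\sigma_H^2$ by the Riemann sum. Higher chaoses are $O((\beta_NN^{H/2})^2)$ in $L^2$, and $\ln(1+X)=X+O(X^2)$ then gives the result. Asymptotic normality of the linear term follows from Lindeberg, which uses the truncation.

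\textbf{Main obstacle.} The hardest part is Step 1 in the far field, together with its interplay with Step 2. Heavy $\xi$ at distance $\gg N^{1/2}$ can coherently shift $\omega$ over the whole box, so the truncation must be handled with the multiscale shells. Moreover, the centering residue from truncating $\bar\xi$ must be shown to be $o(1)$ after multiplication by $N$ in (a), and $o(\beta_NN^{H/2})$ in (b). I would first try a second-moment bound on each shell, using the condition $\alpha r>1$.
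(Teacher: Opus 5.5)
Your Steps 2--3 follow the paper's route in substance: chaos expansion, replacing $\hat\omega$ by $\bar\omega$ through covariance bounds, a Lindeberg/hypercontractivity swap to the Gaussian field $\mu$, convergence of the Gaussian chaos, positivity of the SHE solution, and the expansion of $N\bar\lambda(\beta_N)$. You perform the swap over the i.i.d.\ $\bar\xi(i,y)$ rather than over time layers, which is equally fine. The gap is Step 1, which does not close as described.

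First, the far-field sum you write has a per-shell factor $\beta_N e_N N^{3/2-r/2}2^{k(1-r)}$, which diverges in $k$. This is intrinsic: $\sum_y\psi_y=\infty$ for $r<1$, so $\Delta\omega^{\mathrm{far},h}$ exists only as a centered $L^2$ series and no $L^1$ bound can work. The $L^2$ bound on the \emph{linear} far-field term is summable, but because $2r>1$, and it is not what is needed. You must control $e^{\beta_N\Delta H_N^{\mathrm{far},h}(S)}-1$ jointly with the weight $e^{\beta_NH_N^{\bar\omega}(S)}$, while $\mathbb E_{\mathbb Q}[e^{c\xi}]=\infty$ for every $c>0$ under a regularly varying right tail; hence $\mathbb E_{\mathbb Q}[Z_N(\beta_N;\omega;A)]=\infty$ for every nonempty $A$. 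The paper first discards the event $D_h$ that some far site has $2\beta_N\Psi_h(y)|\xi(i,y)|>1$. This is where $\alpha r>1$ is used, via $\mathbb Q(D_h)\le C_\rho N\beta_N^{\alpha-\rho}h^{1-r(\alpha-\rho)}$. On $D_h^c$ it bounds the second moment of the clipped exponential under the product measure tilted by $e^{\beta_NH_N^{\bar\omega}(S)}$, uniformly over paths with $\max_i|S_i|\le h$, which gives the error $\mathfrak d_N(h)=\beta_N^2Nw_Nh^{2H-2}$.

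Second, your effective coefficient $\beta_N2^{-kr}N^{-r/2}$ presupposes that the path stays in the diffusive box. A path reaching a shell sees the large $\xi$'s there as near field, and its small $\mathbb P_N$-probability proves nothing when the expected weight is infinite. This is why the paper decomposes \emph{path space} by range into $\mathcal B_1,\mathcal A_2,\dots,\mathcal A_m$ and runs an energy--entropy comparison. It works on $\{\beta_NT_{h_j}\le I_{j-1}/4\}$ (Markov, using $\mathbb E_{\mathbb Q}[T_h]\le CNhe_N$). It combines this with $\mathbb E_{\mathbb Q}[Z_N(\beta_N;\bar\omega;A)]=\mathbb P_N(A)\,\mathbb E_{\mathbb Q}[Z_N(\beta_N;\bar\omega)]$ and $\mathbb P_N(\mathcal A_j)\le4e^{-I_{j-1}/2}$, with scales chosen so that $\beta_NNh_je_N/I_{j-1}\to0$.

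Two more ingredients are missing. Turning additive bounds on $Z_N(\beta_N;\omega)-Z_N(\beta_N;\bar\omega)$ into a statement about logarithms needs the positivity condition $\lim_{\delta\downarrow0}\limsup_{N}\mathbb Q\bigl(Z_N(\beta_N;\bar\omega)<\delta\,\mathbb E_{\mathbb Q}[Z_N(\beta_N;\bar\omega)]\bigr)=0$. This is available only after the chaos limit: from positivity of the SHE solution in (a), and from $Z_N(\beta_N;\bar\omega)e^{-N\bar\lambda(\beta_N)}\to1$ in (b). So Step 1 must be fed the output of Steps 2--3 rather than precede them.

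In (b), the comparison has to be $o(\beta_NN^{H/2})$, i.e.\ $o(N^{-a})$ for every $a<A_*(\alpha,H)$. You say Step 1 fixes this range but derive no rate. In the paper, $A_*$ comes from optimizing the first scale $h_1=N^p$, $p\in(1/2,\min\{1,1/2+\kappa\})$, between two terms. One is the centered near-field drift $\beta_NN|\mu_N|h_1^{1-r}$, which is present even on the good event because there $\Delta\xi=-\mu_N$; so your union bound does not make the near field vanish. The other is the far-field term $\sqrt{\mathfrak d_N(h_1)}$. With a first scale of size $N^{1/2}\log N$, the same estimates only yield $a<\frac34+\frac\kappa2-\frac H2$, strictly smaller than $A_*(\alpha,H)$.
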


\medskip
\begin{theorem}
\label{thm:main_gaussian}
Assume that conditions \textbf{(A)} and \textbf{(B)} hold. Let the tail exponent satisfy $\alpha \in (2, \frac{6}{3-2r}]$ and the sequence of inverse temperatures chosen as $\beta_N = \beta l(N^{3/2})^{-1}$. When the scaling $\beta_N N^{H/2} \to 0$ as $N \to \infty$ , then the normalized fluctuation of the log-partition function converges in distribution to a centered Gaussian distribution:
$$
    \frac{1}{\beta_N N^{H/2}} \left( \ln Z_N(\beta_N; \omega) - N \sum_{j=2}^{p'} \frac{\beta_N^j}{j!} \mathbb{E}_{\mathbb{Q}}[\bar{\xi}^j] \sum_{y=-\infty}^{\infty} \psi_y^j \right)  \stackrel{D}{\longrightarrow} \mathcal{N}(0, \sigma_H^2),
$$
where \(p'\) is any fixed integer satisfying
\[
        1\le p'<\alpha,
        \qquad\text{and}\quad
        \frac{N\beta_N^{p'+1}}{\beta_NN^{H/2}}\longrightarrow0 ~\text{as}~N\to\infty.
\]
The variance $\sigma_H^2$ is given by the integral:
$$
    \sigma_H^2 = 2H(2H-1) \int_0^1 \int_{\mathbb{R}^2} \rho(t,x)\rho(t,y)|x-y|^{2H-2} \,dx \,dy \,dt.
$$
\end{theorem}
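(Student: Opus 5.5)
The plan is to prove Theorem~\ref{thm:main_gaussian} by truncating the heavy-tailed variables $\xi$ so that the truncated model falls under the Gaussian-fluctuation mechanism of Theorem~\ref{thm:main_log_partition}(b), and then showing that the truncation does not change $\ln Z_N$ at scale $\beta_N N^{H/2}$.

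\textbf{Truncation and the near/far split.} Fix a truncation level $\ell_N=\varepsilon\, l(N^{3/2})$ and write
\[
\xi=\bar\xi+\hat\xi,\qquad \bar\xi=\xi\mathbbm 1_{\{|\xi|\le \ell_N\}}-\mathbb E_{\mathbb Q}[\xi\mathbbm 1_{\{|\xi|\le \ell_N\}}].
\]
This gives $\omega=\bar\omega+\hat\omega$ with $\bar\omega(i,x)=\sum_y\psi_{y-x}\bar\xi(i,y)$. The relevant space-time box has $N\times N^{1/2}=N^{3/2}$ sites, so $l(N^{3/2})$ is the size of the largest $\xi$ the walk can see. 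With $\beta_N=\beta/l(N^{3/2})$, each truncated weight satisfies $\beta_N|\bar\xi|\le\beta\varepsilon$, which is bounded. The truncated variables have all moments, with $\mathbb E|\bar\xi|^k\lesssim \ell_N^{k-\alpha}$ for $k>\alpha$. I will split $\hat\omega$ into two parts:
\begin{itemize}
\item a \emph{near-field} part, from $y$ with $|y|\le N^{1/2+\delta}$ and $i\le N$;
\item a \emph{far-field} part, from the remaining $y$, which enter only through the small coefficients $\psi_{y-x}\lesssim |y-x|^{-r}$.
\end{itemize}

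\textbf{Step 1: the truncated model.} I will show that
\[
\frac{\ln Z_N(\beta_N;\bar\omega)-\text{drift}}{\beta_N N^{H/2}}
\]
converges to $\mathcal N(0,\sigma_H^2)$. To do this, expand $e^{\beta_N\bar\omega}$ in a polynomial chaos in the centered variables $\bar\xi$. The first-order term is $\beta_N\sum_{i,x}\bar\omega(i,x)\,\mathbb P(S_i=x)$. Its variance is $\beta_N^2\sum_i\sum_{x,z}p_i(x)p_i(z)\gamma(x-z)$, which is asymptotically $\beta_N^2N^{H}\sigma_H^2$ by \eqref{gamma1}--\eqref{gamma2} and the local CLT. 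A Lindeberg / polynomial-chaos invariance principle then gives Gaussianity, and the Lindeberg condition holds because the truncated variables are bounded by $\ell_N$ while the coefficient weights are small.

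The higher chaoses are of size $(\beta_NN^{H/2})^k$ relative to the first, so they are negligible. The exceptions are the diagonal (self-interaction) terms $\beta_N^j\mathbb E[\bar\xi^j]\sum_y\psi_y^j$, which produce the deterministic drift. Expanding $\ln$ to order $p'$ captures this drift, and the terms beyond order $p'$ are of size $N\beta_N^{p'+1}=o(\beta_NN^{H/2})$ by the hypothesis. The condition $p'<\alpha$ guarantees that $\mathbb E[\xi^{j}]$ is finite for $j\le p'$. Replacing $\mathbb E[\bar\xi^j]$ by the truncated moments costs $O(\ell_N^{j-\alpha})$ per site, and I expect this error to be absorbed for the same reason.

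\textbf{Step 2: comparison, the main obstacle.} I must show that
\[
\frac{\ln Z_N(\beta_N;\omega)-\ln Z_N(\beta_N;\bar\omega)}{\beta_NN^{H/2}}\to 0\quad\text{in probability}.
\]
The difference equals $\ln\mathbb E_{\bar\omega\text{-polymer}}\big[e^{\beta_N\sum_i\hat\omega(i,S_i)}\big]$.

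For the \emph{far field}, $\sum_i\hat\omega(i,S_i)$ is controlled in $L^1$ or $L^{q}$ with $q<\alpha$. This uses $\mathbb E|\hat\xi|\lesssim \ell_N^{1-\alpha}$ together with the decay $\sum_{|y|>N^{1/2+\delta}}\psi_{y-x}$, and a union bound over $i\le N$. The resulting bound is
\[
\beta_N N\,\ell_N^{1-\alpha}\,N^{(1/2+\delta)(1-r)}\ll \beta_N N^{H/2},
\]
which I will check using $\ell_N^{\alpha}\approx N^{3/2}$.

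For the \emph{near field}, there are typically only $O(\varepsilon^{-\alpha})$ large variables $\hat\xi(i,y)\ne0$ in the box. Each affects the polymer through $\beta_N\psi_{y-S_i}\hat\xi$. This effect is small when the walk is far from $y$, but it is not small at distance $O(1)$, where $\beta_N\hat\xi$ can be $O(1)$ or larger. To handle this I will use a multiscale analysis in the distance $|y-S_i|$ on dyadic shells $2^k$. The contribution from shell $k$ is at most $\beta_N\hat\xi\,2^{-kr}$ times the number of times $i$ spent in the shell, and the number of visits of the random walk to a shell of width $2^k$ near a fixed time-space point is about $2^{k}$ in expectation. What makes the near field negligible is the \emph{subcriticality} $\alpha\le\alpha_c=3/H$ together with $\beta_NN^{H/2}\to0$. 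These should make the probability, under the $\bar\omega$-polymer, of passing within distance $N^{\delta}$ of any large site tend to $0$. After a second-moment bound, the residual contribution over all shells is $o(\beta_NN^{H/2})$.

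I expect this near-field multiscale estimate, especially the small-distance shells where $e^{\beta_N\hat\omega}$ is not linearizable, to be the main difficulty. I will first try to control it by conditioning on the locations of the large $\xi$ and using a polymer-hitting estimate.
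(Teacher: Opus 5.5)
Your Step 1 matches the paper's argument: a first-chaos CLT, higher chaoses of relative size $a_N^{k-1}$ with $a_N=\beta_NN^{H/2}$, and the drift from Lemma~\ref{lem:cgf_expansion}. One correction there: Lindeberg does not follow from $|\bar\xi|\le\ell_N$. The largest coefficient is of order $N^{-H/2}$, and $N^{-H/2}\ell_N\to\infty$ when $\alpha<3/H$. It follows instead from the uniformly bounded $q$-th moments of $\bar\xi$, $q\in(2,\alpha)$.

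The genuine gaps are in Step 2, starting with the far field. Since $r<1$, $\sum_{|y|>R}\psi_{y-x}=\infty$, so no $L^1$ (or triangle-inequality $L^q$) bound is available. The estimate you display contains the factor $N^{(1/2+\delta)(1-r)}\asymp\sum_{|y|\le N^{1/2+\delta}}\psi_y$, so it is a near-field bound. The far field can only be controlled through cancellation, via $\sum_{|y|>R}\psi_y^2\le CR^{2H-2}$. Because it sits inside $e^{\beta_N\Delta H_N^{\mathrm{far},h}(S)}$ with variables that have no exponential moments, you need the paper's three far-field tools:
\begin{itemize}
\item the clipping event $D_h$, which is negligible because $r\alpha>1$;
\item the one-site tilted bounds of Lemma~\ref{lem:one-site-clipped-section2};
\item the product structure under the path-tilted measure $\mathbb Q^S$.
\end{itemize}
Together these give the relative error $\sqrt{\mathfrak d_N(h)}$, where $\mathfrak d_N(h)=\beta_N^2Nw_Nh^{2H-2}$ and $\mathfrak d_N(h_1)/a_N^2\le N^{3/\alpha-3/2+o(1)}\to0$.

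Turning fixed-path bounds into a bound on $\ln Z_N$ also needs an averaging device your plan lacks. The paper uses the positivity event $G_N(\delta)$ together with $\mathbb E_{\mathbb Q}[Z_N(\beta_N;\bar\omega;A)]=\mathbb P_N(A)\,\mathbb E_{\mathbb Q}[Z_N(\beta_N;\bar\omega)]$. In addition, a split at a fixed radius $N^{1/2+\delta}$ around the origin says nothing about paths that leave that box. For those paths the ``far'' sites are next to the path, and large values can reward the excursion. The paper handles them through the range decomposition $\mathcal B_1,\mathcal A_2,\dots,\mathcal A_m$, paying the entropy cost $\mathbb P_N(\mathcal A_j)\le4e^{-I_{j-1}/2}$ against a crude Markov bound on the near field.

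For the near field, your choice $\ell_N=\varepsilon\,l(N^{3/2})$ leaves order $\varepsilon^{-\alpha}$ exceedances in $[1,N]\times[-\sqrt N,\sqrt N]$, and order $\varepsilon^{-\alpha}N^{\delta}$ in your near-field box. Disposing of them requires a shell/hitting argument under the random polymer measure, with non-linearizable factors $e^{\beta_N\psi_{y-S_i}\hat\xi(i,y)}$ and an $o(a_N)$ rather than $o(1)$ target. That argument is exactly what your plan leaves open.

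The missing idea is to truncate slightly above the extreme-value scale: $k_N=\beta_N^{-1}\,l(N^{3/2}(\log N)^\eta)/l(N^{3/2})$ with $\eta\in(1,\alpha)$, and $h_1=\sqrt N(\log N)^{\theta_1}$ with $\frac12<\theta_1<\eta$. Then $\mathbb Q(A_{h_1})\le Nh_1q_N\le C(\log N)^{\theta_1-\eta}\to0$. So with high probability there is no exceedance at all in the region seen by paths in $\mathcal B_1$. The near field there reduces to the deterministic centering $-\mu_N$, whose contribution satisfies $s_{1,N}/a_N\le N^{-3/4+3/(2\alpha)+o(1)}\to0$. The price is $\beta_Nk_N=(\log N)^{\eta/\alpha+o(1)}\to\infty$, which costs only factors $N^{o(1)}$ in Lemma~\ref{lem:cgf_expansion} and in the moment bounds.

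Finally, what makes every comparison error small is $\alpha>2$. The condition $\alpha\le3/H$ only serves to make $a_N\to0$.
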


\subsection{Strategy of the proof}
In our proof, a truncation procedure is involved along with several modified partition functions; therefore, a detailed exposition is necessary to clarify these steps. To ensure that the partition function is well-defined during the calculation, we need to truncate the large values of the i.i.d. family \(\{\xi(i,x)\}\). The truncated family is denoted by \(\{\bar{\xi}(i,x)\}\), and the corresponding random environment is denoted by \(\bar{\omega}(i,x)\).

For a given truncation level \(k_N > 0\),  the truncated family \(\{\bar{\xi}(i,x)\}\) is defined by
$$
\bar{\xi}(i,x) = \xi(i,x) \cdot \mathbbm{1}_{\{|\xi(i,x)| \le k_N\}} - \mathbb{E}_{\mathbb{Q}}[\xi(i,x) \cdot \mathbbm{1}_{\{|\xi(i,x)| \le k_N\}}],
$$
and the corresponding truncated random environment \(\bar{\omega}(i,x)\) is then given by
\begin{equation}
\bar{\omega}(i,x) = \sum_{y=-\infty}^{\infty} \psi_{y-x} \, \bar{\xi}(i, y). \label{truncated_omega}
\end{equation}
The choices of the truncation level \(k_N\) and the inverse temperature \(\beta_N\) are given directly by
\begin{equation} \label{k_N}
k_N = 
\begin{cases}
\beta_N^{-1}, & \alpha \in \left( \frac{6}{3-2r}, \infty \right), \\[10pt]
\beta_N^{-1} \cdot \dfrac{l \bigl(N^{\frac{3}{2}} (\ln N)^{\eta}\bigr)}{l\bigl(N^{\frac{3}{2}}\bigr)}, & \alpha \in \left(2, \frac{6}{3-2r}\right],
\end{cases}
\end{equation}
where \(\eta \in (1, \alpha)\), \(\beta \in [0, \infty)\) and
\begin{equation} \label{beta_N}
\beta_N = 
\begin{cases}
\beta N^{-\frac{H}{2}}, & \alpha \in \left( \frac{6}{3-2r}, \infty \right), \\[10pt]
\beta \, l\bigl(N^{\frac{3}{2}}\bigr)^{-1}, & \alpha \in \left(2, \frac{6}{3-2r}\right].
\end{cases}
\end{equation}

\begin{remark}
    The selection of $\beta_N$ is based on several considerations. Drawing on the choice of $\beta_N$ in \cite{DZ16}, we suppose that as the random environment transitions from exponential moments to finite moments, the scaling of $\beta_N$ remains valid up to a critical moment. Accordingly, the first candidate scaling is taken from \cite{RANG20203408}. The calculation of the critical moment relies on the entropy-energy balance argument presented in \cite{Biroli_2007}
\end{remark}

The partition function corresponding to the random environment \(\bar{\omega}(i,x)\) is also given in the form of \eqref{partition function} and denoted by \(Z_N(\beta_N;\bar{\omega}(i,x))\). Through some tedious calculations, we obtain that
\begin{equation}
\tau_N^{-1}|\ln Z_N(\beta_N;\omega) - \ln Z_N(\beta_N;\bar{\omega})| \xrightarrow{\mathbb{Q}} 0, \label{energy-entropy balance}
\end{equation}
 where $\tau_N$ characterizes the convergence rate. Consequently, the limit of the log-partition function \(\ln Z_N(\beta_N;\omega)\) for the original random environment as \(N \to \infty\) can be derived from the truncated log-partition function \(\ln Z_N(\beta_N;\bar{\omega})\).

The truncated normalized partition function can be linearized as:
\begin{equation}
Z_N(\beta_N; \bar{\omega}) e^{-N \bar{\lambda}(\beta_N)} = \mathbb{E}_{\mathbb{P}_N} \left( \prod_{i=1}^N \bigl( 1 + \beta_N \hat{\omega}(i, S_i) \bigr) \right), \label{truncation normalized partition function}
\end{equation}
where
$$
\hat{\omega}(i, x) = \frac{e^{\beta_N \bar{\omega}(i,x) - \bar{\lambda}(\beta_N)} - 1}{\beta_N},
$$
and
$$
\bar{\lambda}(\beta_N) = \ln \left( \mathbb{E_Q} \left( e^{\beta_N \bar{\omega}} \right) \right) = \sum_{y=-\infty}^{\infty} \ln \left( \mathbb{E_Q} \left( e^{\beta_N \psi_{y-x} \bar{\xi}(i,y)} \right) \right).
$$
Note that when \(\beta_N\) is sufficiently small (high temperature) one has \(\hat{\omega} \approx \bar{\omega}\). Hence we also consider the modified partition function of the linear form:
\begin{equation}
\mathfrak{Z}_N(\beta_N; \bar{\omega}) :=\mathbb{E}_{\mathbb{P}_N} \left( \prod_{i=1}^N \bigl( 1 + \beta_N \bar{\omega}(i, S_i) \bigr) \right).\label{modified partition function}
\end{equation}

\begin{remark}
    In fact, whether \(\hat{\omega} \approx \bar{\omega}\) holds as \(\beta_N \to 0\) depends on the value of \(\alpha\). A simple calculation shows that when \(\alpha > 2\), the error is negligible; while, when \(\alpha \le 2\), it is not easy to show this (see Lemma \ref{lemma:env_covariance_bounds}).
\end{remark}

By controlling the differences between \eqref{truncation normalized partition function} and \eqref{modified partition function}, we can get the limit of the normalized truncated partition function  by the modified partition function \(\mathfrak{Z}_N(\beta_N; \bar{\omega})\). The key point is to apply the Lindeberg principle for polynomial expansions to transform the environmental variables \(\bar{\omega}(i,x)\) into Gaussian variables \(\mu(i,x)\), and then use the hypercontractivity technique for multilinear polynomials, developed by Mossel et al.~\cite{MDO2005} to disordered systems, to control the error between \(\mathfrak{Z}_N(\beta_N; \bar{\omega})\) and \(\mathfrak{Z}_N(\beta_N; \mu)\). Finally, by employing the method of weighted U-statistics for Gaussian variables, the limiting distribution of \(\mathfrak{Z}_N(\beta_N; \mu)\) can be obtained. This method was adopted by  Caravenna et al.~\cite{Caravenna2017} in the independent case, and then by Chen--Gao~\cite{GC23} to spatially correlated case under exponential-moment assumptions. Here we develop this framework further under finite moment. 

In order to estimate the error between \(\hat{\omega}\) and \(\bar{\omega}\), we need the following lemma, whose proof is postponed to  the appendix.

\begin{lemma}\label{lem:cgf_expansion}
Assume conditions \((A)\) and \((B)\), and suppose that the right tail of
\(\xi\) satisfies
\[
        \mathbb Q(\xi>x)=x^{-\alpha}L(x),
        \qquad \alpha>2.
\]
For fixed \(q>0\), denote log-cumulant generating function by
\[
        \Lambda_N(q)
        :=
        \log\mathbb E_{\mathbb Q}
        \left[
        \exp\{q\beta_N\bar\omega(i,x)\}
        \right].
\]
Let \(\theta\in(2,\alpha)\) satisfy \(r\theta>1\), and put
\(p:=\lfloor\theta\rfloor\).  Assume \(\beta_N\to0\), \(k_N\beta_N\) is
bounded away from zero, and that for some
\(\varepsilon\in(0,\alpha-\theta)\),
\[
        \left(
        \beta_N^{p+1-\theta}
        +
        \beta_N^{\alpha-\theta-\varepsilon}
        \right)
        \exp\{q\beta_Nk_N\sup_y\psi_y\}
        \longrightarrow0 .
\]
Then, as \(N\to\infty\),
\[
        \frac1{\beta_N^\theta}
        \left|
        \Lambda_N(q)
        -
        \sum_{y\in\mathbb Z}
        \log\left(
        1+
        \sum_{j=1}^p
        \frac{(q\beta_N\psi_y)^j}{j!}
        \mathbb E_{\mathbb Q}[\bar\xi^j]
        \right)
        \right|
        \longrightarrow0 .
\]
\end{lemma}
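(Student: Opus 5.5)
Since $\bar\omega(i,x)=\sum_y\psi_{y-x}\bar\xi(i,y)$ is a sum of independent bounded terms, I will use independence to write
\[
\Lambda_N(q)=\sum_{y\in\mathbb Z}\log M_y,\qquad M_y:=\mathbb E_{\mathbb Q}\bigl[e^{q\beta_N\psi_y\bar\xi}\bigr].
\]
Convergence of the series is justified because $\bar\xi$ is bounded by $2k_N$ and centered, so $\log M_y=O(\beta_N^2\psi_y^2)$ and $\sum_y\psi_y^2<\infty$ for $r>1/2$. Put $T_y:=1+\sum_{j=1}^p\frac{(q\beta_N\psi_y)^j}{j!}\mathbb E_{\mathbb Q}[\bar\xi^j]$. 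The claim then reduces to showing $\sum_y|\log M_y-\log T_y|=o(\beta_N^\theta)$. Both $M_y$ and $T_y$ are $1+O(\beta_N^2\psi_y^2)$ uniformly in $y$: the $j=1$ term vanishes, $\mathbb E\bar\xi^2\to1$, and the higher terms are controlled as below. Hence $|\log M_y-\log T_y|\le C|M_y-T_y|$ for $N$ large, and it suffices to bound $R_y:=M_y-T_y=\mathbb E\bigl[e^{u\bar\xi}-\sum_{j\le p}(u\bar\xi)^j/j!\bigr]$ with $u=q\beta_N\psi_y$.

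I will estimate $R_y$ with the Taylor remainder bound $|e^{v}-\sum_{j\le p}v^j/j!|\le \frac{|v|^{p+1}}{(p+1)!}e^{|v|}$, and also with the weaker bound $C|v|^{\theta}e^{|v|}$, which holds since $\theta\in(p,p+1]$. Because $|\bar\xi|\le 2k_N$, one has $e^{|u\bar\xi|}\le e^{2q\beta_Nk_N\sup\psi}$. The constant $2$ can be handled by a small change of $q$ (or by bounding the centering separately, since $|\mathbb E\xi\mathbb 1_{|\xi|\le k_N}|=O(k_N^{1-\alpha+\epsilon})$ is tiny); I will state the estimate with the hypothesis's exponential factor $E_N:=\exp\{q\beta_Nk_N\sup_y\psi_y\}$. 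Then I split on the size of $|\bar\xi|$ relative to $\beta_N^{-1}$.

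On $\{|\bar\xi|\le \beta_N^{-1}\}$ I use the $(p+1)$-power bound. Since $\mathbb E|\xi|^{\theta'}<\infty$ for $\theta'<\alpha$, we get $\mathbb E[|\bar\xi|^{p+1};|\bar\xi|\le\beta_N^{-1}]\le C\beta_N^{-(p+1-\theta)}\mathbb E|\bar\xi|^\theta$, so this piece is $\le C\beta_N^{\theta}\psi_y^{p+1}\cdot\beta_N^{p+1-\theta}\cdot\beta_N^{-(p+1-\theta)}$. That is too crude, so I will refine it: split instead at level $\delta\beta_N^{-1}$, or simply bound by $u^{p+1}\mathbb E|\bar\xi|^{p+1}\mathbb 1$ using a moment slightly above $\theta$. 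With $\theta<\theta+\varepsilon'<\alpha$, the piece is $O(\beta_N^{p+1}\psi_y^{p+1}\beta_N^{-(p+1-\theta-\varepsilon')})E_N$ if $p+1>\theta+\varepsilon'$, and $O(\beta_N^{p+1}\psi_y^{p+1})E_N$ if $\mathbb E|\xi|^{p+1}<\infty$. In either case, after division by $\beta_N^\theta$, the piece is bounded by $C\psi_y^{\theta}\bigl(\beta_N^{p+1-\theta}+\beta_N^{\varepsilon'}\bigr)E_N$. On $\{|\bar\xi|>\beta_N^{-1}\}$ I use the $\theta$-power bound together with the tail of a moment $\theta+\varepsilon$: $\mathbb E[|\bar\xi|^\theta;|\bar\xi|>\beta_N^{-1}]\le \beta_N^{\varepsilon}\mathbb E|\xi|^{\theta+\varepsilon}$. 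More sharply, regular variation gives $\asymp\beta_N^{\alpha-\theta}L$, which is $\le\beta_N^{\alpha-\theta-\varepsilon}$. This piece contributes $C\psi_y^\theta\beta_N^{\theta}\beta_N^{\alpha-\theta-\varepsilon}E_N$. Summing over $y$, using $\sum_y\psi_y^\theta<\infty$ because $r\theta>1$, gives
\[
\beta_N^{-\theta}\sum_y|R_y|\le C\bigl(\beta_N^{p+1-\theta}+\beta_N^{\alpha-\theta-\varepsilon}\bigr)E_N\to0
\]
by hypothesis.

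The main obstacle is the bookkeeping of the exponents. The remainder must be dominated exactly by the two quantities in the hypothesis, with $\psi_y^\theta$ rather than $\psi_y^2$ appearing as the summable weight; this is where $r\theta>1$ is needed. Care is also needed so that the exponential factor arising from the truncation bound $|\bar\xi|\le 2k_N$ matches $E_N$, and this may require absorbing the centering constant. A secondary point is the uniform control $|\log M_y-\log T_y|\le C|R_y|$. This needs $M_y,T_y$ bounded away from $0$ uniformly in $y$, which follows since $\sup_y|M_y-1|+|T_y-1|\le C\beta_N^2\bigl(1+\beta_N^{\theta-2}E_N\bigr)\to0$.
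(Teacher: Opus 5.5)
Your argument is correct. It has the same skeleton as the paper's proof: factorize over $y$ by independence, bound the Taylor remainder separately for small and large noise values, control the tail with Karamata/Potter, sum in $y$, and use that $\log$ is Lipschitz near $1$. The decomposition, however, differs.

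\emph{The paper's route.} It expands the uncentered truncation $Z_y=\xi\mathbf 1_{\{|\xi|\le k_N\}}$ and splits at the site-dependent level $a|\lambda_{N,y}|^{-1}$. It applies the $(p+1)$-power remainder bound on both pieces, giving a per-site error $e^{|\lambda_{N,y}|k_N}(|\lambda_{N,y}|^{p+1}+|\lambda_{N,y}|^{\alpha}L(|\lambda_{N,y}|^{-1}))$. It then recenters through a Cauchy-product argument and sums using $L((\beta_N\psi_y)^{-1})\le C\psi_y^{-\varepsilon}L(\beta_N^{-1})$ and $\sum_y\psi_y^{\alpha-\varepsilon}<\infty$.

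\emph{Your route.} You expand $\bar\xi$ itself, so $R_y=\mathbb E[\varphi_p(u\bar\xi)]$ with $\varphi_p(v)=e^v-\sum_{j\le p}v^j/j!$, and no recentering is needed. You split at the uniform level $\beta_N^{-1}$. On the large-value piece you use $|\varphi_p(v)|\le C|v|^\theta e^{|v|}$, which involves only the finite moment of order $\theta<\alpha$ and makes $\psi_y^\theta$ the summable weight. This is where $r\theta>1$ enters; it holds automatically since $\theta>2$ and $r>1/2$.

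\emph{What this buys.} Your estimate matches the stated hypothesis more cleanly. When $p+1>\alpha$, the paper's $(p+1)$-power bound on $\{a|\lambda_{N,y}|^{-1}<|\xi|\le k_N\}$ actually produces a truncated moment of size $k_N^{p+1-\alpha}$. That is an extra factor of order $(\beta_Nk_N)^{p+1-\alpha}$ which the paper's display omits. It is harmless when $\beta_Nk_N$ is bounded and costs only a polylogarithmic factor in the heavy-tailed regime. Your tail term, by contrast, is $C\psi_y^\theta\beta_N^{\theta}\beta_N^{\alpha-\theta-\varepsilon}E_N$, with $E_N=\exp\{q\beta_Nk_N\sup_y\psi_y\}$, and has no further dependence on $k_N$.

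Two details need tidying.

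First, drop the ``small change of $q$'' option. The hypothesis controls $E_N$ only for the given $q$, and $E_N^2$ is not covered. Use instead $|\bar\xi|\le k_N+|c_N|$ with $c_N=\mathbb E[\xi\mathbf 1_{\{|\xi|\le k_N\}}]=-\mathbb E[\xi\mathbf 1_{\{|\xi|>k_N\}}]\to0$. This gives $e^{|u\bar\xi|}\le CE_N$.

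Second, on $\{|\bar\xi|\le\beta_N^{-1}\}$ you have $|u\bar\xi|\le q\sup_y\psi_y$, so no factor $E_N$ is needed there. The terms $\beta_N^{p+1-\theta}$ and $\beta_N^{\varepsilon'}$ then tend to zero on their own. That is what justifies dropping $\beta_N^{\varepsilon'}$ from your final display; alternatively, take $\varepsilon'=\alpha-\theta-\varepsilon$.
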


  Up to this point, the limiting behavior has been described for the truncated
environment \(\bar\omega\).  To transfer this limit back to the original
environment \(\omega\), we must show that the removed tail
\(\Delta\omega=\omega-\bar\omega\) produces an error negligible at the
logarithmic scale as \eqref{energy-entropy balance}. This step is more
delicate than that in the i.i.d. case.  Indeed, a single removed variable
\(\Delta\xi(i,y):=\xi(i,y)-\bar\xi(i,y)\) contributes to many sites through the moving-average kernel
$$
        \Delta\omega(i,x)
        =
        \omega(i,x)-\bar\omega(i,x)
        =
        \sum_{y\in\mathbb Z}\psi_{y-x}\Delta\xi(i,y).
$$
Thus the error is not localized on the sites visited by the walk, that is, even when the
path is at \(S_i=x\), the removed variables \(\Delta\xi(i,y)\) for all
\(y\in\mathbb Z\) contribute through the weights \(\psi_{y-x}\).  Since our
goal is to compare the two log-partition functions, we rewrite the difference
as
\[
        \log Z_N(\beta_N;\omega)
        -
        \log Z_N(\beta_N;\bar\omega)
        =
        \log R_N\]
        with
        \(
        R_N:=
        \frac{Z_N(\beta_N;\omega)}
             {Z_N(\beta_N;\bar\omega)} .
\)

To control $R_N$, we decompose the path space according to the maximal spatial range
of the polymer as that in \cite{DZ16}.  That is, a subtle sequence of scales $\{h_j\}$ and hence a sequence of blockes of path space  \(\mathcal B_1,\mathcal A_2,\ldots,\mathcal A_m\) leads to 
an elementary bounds
\[
       | R_N-1|\le F_N+U_N.
\]
Here \(L_N\) is the outer-shell contribution under the truncated polymer measure, whereas \(U_N\) is the corresponding outer-shell contribution in the original environment.  See section 2 for more explanation. Thus the comparison is reduced to controlling the first block and showing that the outer shells have negligible contribution.

A useful observation is that, for every
set of paths \(A\),
\begin{equation}\label{eq:stationarity-partition-section2}
        \mathbb E_{\mathbb Q}[Z_N(\beta_N;\bar\omega;A)]
        =
        \mathbb P_N(A)\,
        \mathbb E_{\mathbb Q}[Z_N(\beta_N;\bar\omega)] .
\end{equation}
This suggests that if we can succeed in replacing the original environment $\omega$ by the truncated $\bar\omega$ in the numerator of the ratios, then it  can be controlled by the entropy of the random walk, whose path wanders far from the start point. Specifically, for example, we introduce
\[
        G_N(\delta)
        :=
        \left\{
        Z_N(\beta_N;\bar\omega)
        \ge
        \delta\,
        \mathbb E_{\mathbb Q}[Z_N(\beta_N;\bar\omega)]
        \right\}.
\]
On \(G_N(\delta)\),
\[
    \frac{Z_N(\beta_N;\omega;A)}
         {Z_N(\beta_N;\bar\omega)}
    \xrightarrow{\text{``replace''}}
    \frac{Z_N(\beta_N;\bar\omega;A)}
         {Z_N(\beta_N;\bar\omega)}
    \le
    \frac{1}{\delta}\,
    \frac{Z_N(\beta_N;\bar\omega;A)}
         {\mathbb E_{\mathbb Q}[Z_N(\beta_N;\bar\omega)]}.
\]where ``replace'' means using $Z_N(\beta_N;\bar\omega;A)$ to control $Z_N(\beta_N;\omega;A)$. Taking \(\mathbb Q\)-expectation gives
\[
        \mathbb E_{\mathbb Q}
        \left[
        \frac{Z_N(\beta_N;\bar\omega;A)}
             {Z_N(\beta_N;\bar\omega)}
        ;G_N(\delta)
        \right]
        \le
        \delta^{-1}\mathbb P_N(A).
\]

The way we use $Z_N(\beta_N;\bar\omega;A)$ to control $Z_N(\beta_N;\omega;A)$ is through far--near decomposition and exponential change of measure. We notice that
\[
\begin{aligned}
        Z_N(\beta_N;\omega;A)
        &=
        \int_A
        \exp\{\beta_N H_N^{\bar\omega}(S)\}
        \exp\{\beta_N\Delta H_N(S)\}
        \,\mathbb P_N(dS)                                      \\
        &=
        \int_A
        \exp\{\beta_N H_N^{\bar\omega}(S)\}
        \exp\{\beta_N\Delta H_N^{\mathrm{near},h}(S)\}
        \exp\{\beta_N\Delta H_N^{\mathrm{far},h}(S)\}
        \,\mathbb P_N(dS).
\end{aligned}
\]
It involves the perturbation \(e^{\beta_N\Delta H_N(S)}\) generated by the removed tail $\Delta\omega $.  The role of the far--near decomposition is to show that this perturbation is negligible. For a complete descriptions see Section 2.1--2.3.

In summary, the first block is controlled by combining the near-field
$L^1$ bound with the far-field $L^2$ estimate.  On the outer
shells, the same two estimates are combined with the random-walk entropy cost
\(\mathbb P_N(\mathcal A_j)\le Ce^{-I_{j-1}/2}\).

Finally, we present the strategy for determining the limit when \(\alpha > 2\). Define \(\bar{\zeta}(i,x) = \beta_N \hat{\omega}(i,x)\). Expanding the truncated normalized partition function gives
\begin{equation}\label{truncated_partition}
\begin{aligned}
Z_N(\beta_N; \bar{\omega}) \exp\left(-N \bar{\lambda}_N(\beta_N)\right) &= \mathbb{E}_{\mathbb{P}_N} \left[ \exp\left( \sum_{i=1}^N \beta_N \bar{\omega}(i, S_i) - \bar{\lambda}_N(\beta_N) \right) \right] \\
&= \mathbb{E}_{\mathbb{P}_N} \left[ \prod_{i=1}^N \left(1 + \bar{\zeta}(i, S_i)\right) \right] \\
&= 1 + \sum_{1 \leq i \leq N, \, x \in \mathbb{Z}} \bar{\zeta}(i, x) p_N(i, x) \\
&\quad + \sum_{1 \leq i_1 < i_2 \leq N, \, x_1, x_2 \in \mathbb{Z}} \bar{\zeta}(i_1, x_1) \bar{\zeta}(i_2, x_2) p_N(i_1, x_1) p_N(i_2, x_2) + \cdots,
\end{aligned}
\end{equation}
where \(p_N(i, x)\) is the transition probability of the random walk.
Clearly, \(\mathbb{E}_{\mathbb{Q}}[\bar{\zeta}(i,x)] = 0\), and by Lemma \ref{lem:cgf_expansion},
\[
\frac{\mathbb{E}_{\mathbb{Q}}\left[e^{\beta_N(\bar{\omega}(i,x) + \bar{\omega}(i,y))}\right]}{\mathbb{E}_{\mathbb{Q}}\left[e^{\beta_N \bar{\omega}(i,x)}\right] \mathbb{E}_{\mathbb{Q}}\left[e^{\beta_N \bar{\omega}(i,x)}\right]} - 1 \approx \beta_N^2 \mathbb{E}_{\mathbb{Q}}[\bar{\xi}^2] \gamma(x-y).
\]
For the variance of the second term of RHS of \eqref{truncated_partition}, we have:
\[
\begin{aligned}
\mathbb{V}\text{ar}_{\mathbb{Q}} \left[ \sum_{1 \leq i \leq N, \, x \in \mathbb{Z}} \bar{\zeta}(i,x) p_N(i,x) \right] &= \sum_{i=1}^N \mathbb{V}\text{ar}_{\mathbb{Q}} \left[ \sum_{x \in \mathbb{Z}} \bar{\zeta}(i,x) p_N(i,x) \right] \\
&\approx \sum_{1 \leq i \leq N, \, x, y \in \mathbb{Z}} \beta_N^2 \mathbb{E}_{\mathbb{Q}}[\bar{\xi}^2] \gamma(x-y) p_N(i,x) p_N(i,y).
\end{aligned}
\]
Combining the local limit theorem, we have \(p_N(i,x) \approx \frac{2}{\sqrt{N}} \rho\left(\frac{i}{N}, \frac{x}{\sqrt{N}}\right)\), where \(\rho(t,x)\) is the standard Gaussian heat kernel. Then (ignoring constants),
\[
\begin{aligned}
&\frac{1}{\beta_N N^{\frac{H}{2}}} \left[ Z_N(\beta_N; \bar{\omega}) \exp\left(-N \bar{\lambda}_N(\beta_N)\right) - 1 \right] \\
&= \frac{1}{N^{\frac{H}{2}}} \sum_{1 \leq i \leq N, \, x \in \mathbb{Z}} \hat{\omega}(i,x) \left( \sqrt{N} p_N(i,x) \right) \\
&\quad + \frac{\beta_N N^{\frac{H}{2}}}{(N^{\frac{H}{2}})^2} \sum_{1 \leq i_1 < i_2 \leq N, \, x_1, x_2 \in \mathbb{Z}} \hat{\omega}(i_1,x_1) \hat{\omega}(i_2,x_2) \left( \sqrt{N} p_N(i_1,x_1) \right) \left( \sqrt{N} p_N(i_2,x_2) \right) + \cdots \\
&\approx \sqrt{2} \int_0^1 \int_{\mathbb{R}} \rho(t,x) W(dt, dx) \\
&\quad + \beta_N N^{\frac{H}{2}} (\sqrt{2})^2 \int_{0 < t_1 < t_2 < 1} \int_{\mathbb{R}^2} \rho(t_1,x_1) \rho(t_2 - t_1, x_2 - x_1) W(dt_1, dx_1) W(dt_2, dx_2) + \cdots,
\end{aligned}
\]
where \(W(dt, dx)\) is a space–time white noise satisfying
\[
\mathbb{E}_{\mathbb{Q}}[W(dt_1, dx_1) W(dt_2, dx_2)] = \delta(t_1 - t_2) |x_1 - x_2|^{2H-2}.
\]

Note that when \(\alpha \in \left(2, \frac{6}{3-2r}\right]\), our choice of \(\beta_N\) behaves asymptotically as \(\mathcal{O}\left(N^{-\frac{3}{2\alpha}}\right)\). Consequently, the scaling \(\beta_N N^{\frac{H}{2}} \to 0\) holds, and all terms except the first one converge to zero. In this case, the limit of the truncated normalized partition function is determined entirely by the first term, which clearly follows a centered normal distribution.

\subsection{Organization of the paper}
The rest of the paper is organized as follows. In Section 2, we compare the original and truncated partition functions. Section 3 is devoted to analyzing the limit behavior of the modified partition function. In Section 4, we complete the proof of Theorem \ref{thm:main_log_partition}. Section 5 provides the proof of Theorem \ref{thm:main_gaussian}. Finally, some technical lemmas are deferred to Appendix A and Appendix B. Throughout the paper, all asymptotic notation is understood as \(N\to\infty\). For positive deterministic sequences \(a_N,b_N\), we write \(a_N=O(b_N)\) if \(a_N\le Cb_N\) for all large N, where \(C\) does not depend on \(N\), and \(a_N=o(b_N)\) if \(a_N/b_N\to0\). We write \(a_N\asymp b_N\) if both \(a_N=O(b_N)\) and \(b_N=O(a_N)\) hold.

\section{Comparing the original and truncated partition functions}
\label{sec:truncation-comparison}

In this section we are devoted to prove that the partition function in the original environment
\(\omega\) can be replaced, at a suitable scale, by the partition function
in the truncated environment \(\bar\omega\).  The original environment is given
by \eqref{omega}, and the truncated environment is given by
\eqref{truncated_omega}. Our main tool is a far–near field analysis, which is of interest in its own right and has potential applications to the limit theory for nonlinear functionals of moving average innovations with power-law distributions, e.g., see \cite{LIU2024104237} for a related context.  Recall 
\(
        H=\frac32-r\in\left(\frac12,1\right),
\)
and the truncation level is given in \eqref{k_N} by
\begin{equation}\label{eq:kN-section2}
k_N=
\begin{cases}
\beta_N^{-1},&
\alpha>\dfrac{3}{H},\\[1.2ex]
\displaystyle
\beta_N^{-1}\frac{l(N^{3/2}(\log N)^\eta)}{l(N^{3/2})},&
2<\alpha\le\dfrac{3}{H},
\end{cases}
\end{equation}
where \(\eta\in(1,\alpha)\).  

For every \(\mathcal A\subseteq\mathcal S_0^N\), set
\[
Z_N(\beta_N;\omega;\mathcal A)
=
\sum_{S\in\mathcal A}
\exp\{ \beta_N H_N^\omega(S)\}\mathbb P_N(S),
\]
and define \(Z_N(\beta_N;\bar\omega;\mathcal A)\) in the same way.  Let
\[
        \Delta\xi(i,y)
        :=
        \xi(i,y)-\bar\xi(i,y)
        =
        \xi(i,y)\mathbb I_{\{|\xi(i,y)|>k_N\}}
        -
        \mathbb E_{\mathbb Q}\!\left[
        \xi(i,y)\mathbb I_{\{|\xi(i,y)|>k_N\}}
        \right],
\]
so that
\[
        \Delta\omega(i,x):=\omega(i,x)-\bar\omega(i,x)
        =
        \sum_{y\in\mathbb Z}\psi_{y-x}\Delta\xi(i,y).
\]
We also write
\[
        \mu_N:=\mathbb E_{\mathbb Q}
        [\xi\mathbbm 1_{\{|\xi|>k_N\}}],
        \qquad
        q_N:=\mathbb Q(|\xi|>k_N),
        \qquad
        e_N:=\mathbb E_{\mathbb Q}
        [|\xi|\mathbbm 1_{\{|\xi|>k_N\}}].
\]
By applying Karamata's theorem to the tail, and by Potter bounds, there are slowly varying functions
\(L_1,L_2\) such that,
\[
        w_N:=k_N^{2-\alpha}L_2(k_N).
\]
Then we have
\begin{equation}\label{eq:tail-basic-bounds-section2}
        q_N\le Ck_N^{-\alpha}L(k_N),\qquad
        e_N+|\mu_N|\le Ck_N^{1-\alpha}L_1(k_N),
\end{equation}
and
\begin{equation}\label{eq:tail-second-bounds-section2}
        \mathbb E_{\mathbb Q}[(\Delta\xi)^2]\le Cw_N,
        \qquad
        k_Ne_N\le Cw_N.
\end{equation}
where \(L_2\) is chosen so that
\(L_1(t)\le C L_2(t)\) for all large \(t\).  This is reasonable because the sum
of two slowly varying functions is still slowly varying.

These estimates will be used on the removed tail.  To handle the perturbation arising from the far field, we next establish two lemmas.

\subsection{Near and far decompositions}

 Notice that, for all large \(R\),
\begin{equation}\label{eq:kernel-sums-section2}
        \sum_{|z|\le R}\psi_z\le CR^{1-r}=CR^{H-1/2},
        \qquad
        \sum_{|z|>R}\psi_z^2\le CR^{1-2r}=CR^{2H-2}.
\end{equation}
Moreover, there is a fixed constant \(C_\psi<\infty\) such that, for all large
\(h\), all \(|x|\le h\), and all \(|y|>2h\),
\begin{equation}\label{eq:kernel-comparability-section2}
        C_\psi^{-1}|y|^{-r}\le \psi_{y-x}\le C_\psi|y|^{-r}.
\end{equation}

For \(h\ge1\), we define
\[
        \mathcal B(h)
        :=
        \{S\in\mathcal S_0^N:\ |S_i|\le h,\ 1\le i\le N\}.
\]
For a path \(S\in\mathcal B(h)\), we decompose
\[
        \Delta\omega(i,S_i)
        =
        \Delta\omega^{\mathrm{near},h}(i,S_i)
        +
        \Delta\omega^{\mathrm{far},h}(i,S_i),
\]
where
\[
        \Delta\omega^{\mathrm{near},h}(i,x)
        :=
        \sum_{|y|\le2h}\psi_{y-x}\Delta\xi(i,y),
        \qquad
        \Delta\omega^{\mathrm{far},h}(i,x)
        :=
        \sum_{|y|>2h}\psi_{y-x}\Delta\xi(i,y).
\]
The far-field series is understood as an \(L^2(\mathbb Q)\)-limit of centered
independent variables. 

The near-field contribution of the decomposition satisfies
\begin{equation}\label{eq:near-path-bound-section2}
\begin{aligned}
\left|
\sum_{i=1}^N\Delta\omega^{\mathrm{near},h}(i,S_i)
\right|
&\le
C\sum_{i=1}^N\sum_{|y|\le2h}
|\xi(i,y)|\mathbb I_{\{|\xi(i,y)|>k_N\}}
        +CN|\mu_N|h^{1-r}                                      \\
&=:T_h .
\end{aligned}
\end{equation}
Consequently, by \eqref{eq:tail-basic-bounds-section2},
\begin{equation}\label{eq:near-first-moment-section2}
        \mathbb E_{\mathbb Q}[T_h]\le CNhe_N.
\end{equation}
In addition, we define the event
\begin{equation}\label{$A_h$}
A_h:=\{\exists\,1\le i\le N,\ |y|\le2h:\ |\xi(i,y)|>k_N\}.
\end{equation}
Then by the union bound,
\[
\mathbb Q(A_h)
\le
\sum_{i=1}^N\sum_{|y|\le2h}
\mathbb Q(|\xi(i,y)|>k_N)
=
N(4h+1)q_N
\le CNh q_N .
\]
Moreover, on \(A_h^c\), for all \(1\le i\le N\) and \(|y|\le2h\),
\[
\Delta\xi(i,y)=-\mu_N .
\]
Hence, for \(S\in\mathcal B(h)\),
\begin{equation}\label{eq:near-centered-only-section2}
\left|
\sum_{i=1}^N\Delta\omega^{\mathrm{near},h}(i,S_i)
\right|
\le
|\mu_N|
\sum_{i=1}^N\sum_{|y|\le2h}\psi_{y-S_i}  
\le
CN|\mu_N|h^{1-r},
\end{equation}
where the last inequality follows from \(|S_i|\le h\) and
\(\sum_{|z|\le3h}\psi_z\le Ch^{1-r}\).

\subsection{A one-site exponential estimate}

The far-field estimate will be performed under the product measure tilted by
\(e^{\beta_N H_N^{\bar\omega}(S)}\).  At a single site, this tilt has the
form \(d\mathbb Q_a=e^{a\bar\xi}\mathbb E[e^{a\bar\xi}]^{-1}d\mathbb Q\).

\begin{lemma}\label{lem:one-site-clipped-section2}
Fix \(C_0<\infty\).  There exists \(C<\infty\), depending only on \(C_0\) and
the tail bounds, such that the following holds for all large \(N\).  Let
\(a,u\in\mathbb R\) and \(b>0\) satisfy
\[
        C_0^{-1}|u|\le b\le C_0|u|,
        \qquad
        |a|\le C_0|u|,
        \qquad
        2bk_N\le1 .
\]
Define
\[
        \Delta^{(b)}
        :=
        \xi\mathbbm{1}_{\{|\xi|>k_N,\ 2b|\xi|\le1\}}
        -
        \mu_N,
        \qquad
        d\mathbb Q_a
        :=
        \frac{e^{a\bar\xi}}
        {\mathbb E_{\mathbb Q}[e^{a\bar\xi}]}\,d\mathbb Q .
\]
Then
\[
        \left|
        \mathbb E_{\mathbb Q_a}
        [e^{u\Delta^{(b)}}]-1
        \right|
        \le Cu^2w_N,
\]
and
\[
        \mathbb E_{\mathbb Q_a}
        \left[
        (e^{u\Delta^{(b)}}-1)^2
        \right]
        \le Cu^2w_N,
        \qquad
        \left|
        \mathbb E_{\mathbb Q_a}
        [e^{2u\Delta^{(b)}}]-1
        \right|
        \le Cu^2w_N .
\]
\end{lemma}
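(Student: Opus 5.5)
The plan is a direct second-order Taylor expansion. It works because every quantity in sight is uniformly bounded under the constraint $2bk_N\le1$. The tilt density is bounded above and below, $u\Delta^{(b)}$ is bounded, and the only delicate term is the first-order one, where cancellations coming from $\mathbb E_{\mathbb Q}[\xi]=0$ are needed.

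\textbf{Step 1: uniform bounds.} On $\{|\xi|\le k_N\}$ we have $|\bar\xi|\le 2k_N$. On $\{|\xi|>k_N\}$, since $\mathbb E_{\mathbb Q}\xi=0$, we have $\bar\xi=-\mathbb E_{\mathbb Q}[\xi\mathbbm 1_{\{|\xi|\le k_N\}}]=\mu_N$, and $\mu_N\to0$. Since $|a|k_N\le C_0|u|k_N\le C_0^2bk_N\le C_0^2/2$, it follows that $|a\bar\xi|\le C$. Hence $e^{a\bar\xi}\in[c,C]$, $\mathbb E_{\mathbb Q}[e^{a\bar\xi}]\in[c,C]$, and the density $d\mathbb Q_a/d\mathbb Q$ is bounded by a constant. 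Similarly $|\Delta^{(b)}|\le (2b)^{-1}+|\mu_N|$, so $|u\Delta^{(b)}|\le C_0/2+o(1)$. On such a bounded range we have $|e^x-1-x|\le Cx^2$ and $(e^x-1)^2\le Cx^2$. Moreover, by \eqref{eq:tail-second-bounds-section2} and $\mu_N^2\le e_N^2\le w_N$, we get $\mathbb E_{\mathbb Q_a}[(\Delta^{(b)})^2]\le C\,\mathbb E_{\mathbb Q}[(\Delta^{(b)})^2]\le C\big(\mathbb E_{\mathbb Q}[\xi^2\mathbbm 1_{\{|\xi|>k_N\}}]+\mu_N^2\big)\le Cw_N$. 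This already gives the second claim $\mathbb E_{\mathbb Q_a}[(e^{u\Delta^{(b)}}-1)^2]\le Cu^2w_N$. It also reduces the first and third claims (the latter with $2u$ in place of $u$, which satisfies the same hypotheses up to the value of $C_0$) to showing
\[
|u|\,\bigl|\mathbb E_{\mathbb Q_a}[\Delta^{(b)}]\bigr|\le Cu^2w_N .
\]

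\textbf{Step 2: the first-order term (main obstacle).} A crude bound $|u|\,\mathbb E|\Delta^{(b)}|\le C|u|e_N$ is not sufficient. There is no lower bound on $|u|$ in terms of $k_N^{-1}$; indeed $|u|\lesssim k_N^{-1}$. So we must extract an extra factor of $|u|$. The plan is to write
\[
\mathbb E_{\mathbb Q}[e^{a\bar\xi}]\,\mathbb E_{\mathbb Q_a}[\Delta^{(b)}]
=\mathbb E_{\mathbb Q}[\Delta^{(b)}]+\mathbb E_{\mathbb Q}[(e^{a\bar\xi}-1)\Delta^{(b)}]
\]
and bound the two pieces separately.

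For the first piece, the definition of $\mu_N$ gives $\mathbb E_{\mathbb Q}[\Delta^{(b)}]=-\mathbb E_{\mathbb Q}[\xi\mathbbm 1_{\{|\xi|>k_N,\,2b|\xi|>1\}}]$. This is bounded by $2b\,\mathbb E_{\mathbb Q}[\xi^2\mathbbm 1_{\{|\xi|>k_N\}}]\le C|u|w_N$.

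For the second piece, split $\Delta^{(b)}$ into its big-jump part and the constant $-\mu_N$. On the big-jump event $\bar\xi=\mu_N$, so that part contributes
\[
|e^{a\mu_N}-1|\,e_N\le C|a||\mu_N|e_N\le C|u|\,k_Ne_N\cdot k_N^{-1}|\mu_N|\le C|u|w_N ,
\]
using \eqref{eq:tail-second-bounds-section2} and $|\mu_N|\le e_N\le k_N$. The constant part contributes $|\mu_N|\,|\mathbb E_{\mathbb Q}[e^{a\bar\xi}]-1|$. Because $\mathbb E_{\mathbb Q}\bar\xi=0$ and $a\bar\xi$ is bounded, this is at most $C|\mu_N|a^2\mathbb E\bar\xi^2\le Cu^2|\mu_N|$. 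Then $|u||\mu_N|\le C|\mu_N|/k_N\le Ce_N/k_N\le Cw_N/k_N^2\le Cw_N$.

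Combining the two pieces with the lower bound on $\mathbb E_{\mathbb Q}[e^{a\bar\xi}]$ yields $|\mathbb E_{\mathbb Q_a}[\Delta^{(b)}]|\le C|u|w_N$, which is what Step 1 required.

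\textbf{Step 3: conclusion.} Insert this bound and $\mathbb E_{\mathbb Q_a}[(\Delta^{(b)})^2]\le Cw_N$ into
\[
\bigl|\mathbb E_{\mathbb Q_a}[e^{u\Delta^{(b)}}]-1\bigr|\le |u|\,\bigl|\mathbb E_{\mathbb Q_a}\Delta^{(b)}\bigr|+Cu^2\,\mathbb E_{\mathbb Q_a}[(\Delta^{(b)})^2],
\]
and likewise with $2u$. All constants depend only on $C_0$ and on the tail bounds \eqref{eq:tail-basic-bounds-section2}--\eqref{eq:tail-second-bounds-section2}, as the lemma requires.
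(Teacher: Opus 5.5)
Your proof is correct and follows the same skeleton as the paper's. Both arguments use a bounded tilt density and bounded $u\Delta^{(b)}$, a second-order Taylor expansion, the bound $\mathbb E_{\mathbb Q_a}[(\Delta^{(b)})^2]\le Cw_N$, and $|\mathbb E_{\mathbb Q_a}[\Delta^{(b)}]|\le C|u|w_N$ obtained as the untilted mean plus a tilt correction carrying the factor $|a|\le C_0|u|$. The only differences are local: you bound $\mathbb E_{\mathbb Q}[\Delta^{(b)}]$ via the elementary inequality $|\xi|\le 2b\xi^2$ on $\{2b|\xi|>1\}$ instead of the paper's regular-variation/Potter estimate $Cb^{\alpha-1}L_1(1/b)$, and your jump/constant split of the tilt correction is finer than needed, since the paper simply uses $|e^{a\bar\xi}-1|\le C|u|k_N$ together with $k_Ne_N\le Cw_N$.
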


\begin{proof}
Throughout the proof \(C\) may change from line to line, but it depends only on \(C_0\) and the tail bounds. The case \(u=0\) being trivial, we assume \(u\ne0\).  Since
\(2bk_N\le1\), \(1/(2b)\) is at least \(k_N\). Hence
\[
\begin{aligned}
\mathbb E_{\mathbb Q}[\Delta^{(b)}]
&=
\mathbb E_{\mathbb Q}
[\xi\mathbbm 1_{\{|\xi|>k_N,\ 2b|\xi|\le1\}}]
-
\mathbb E_{\mathbb Q}
[\xi\mathbbm 1_{\{|\xi|>k_N\}}]                         \\
&=
-
\mathbb E_{\mathbb Q}
[\xi\mathbbm 1_{\{|\xi|>k_N,\ 2b|\xi|>1\}}].
\end{aligned}
\]
Therefore
\[
        |\mathbb E_{\mathbb Q}[\Delta^{(b)}]|
        \le Cb^{\alpha-1}L_1(1/b).
\]
Since \(b\asymp |u|\) and \(1/b\ge k_N\), the Potter bounds for the regularly
varying function \(x^{1-\alpha}L_1(x)\) give
\[
        |u|\,|\mathbb E_{\mathbb Q}[\Delta^{(b)}]|
        \le
        C|u|^\alpha L_1(1/|u|)
        \le
        Cu^2k_N^{2-\alpha}L_2(k_N)
        =
        Cu^2w_N.
\]
Moreover, using \((a-b)^2\le2a^2+2b^2\) and
\(|\mu_N|\le e_N\), we have
\[
\begin{aligned}
        \mathbb E_{\mathbb Q}[(\Delta^{(b)})^2]
        &\le
        2\mathbb E_{\mathbb Q}\!\left[
        \xi^2\mathbbm 1_{\{|\xi|>k_N,\ 2b|\xi|\le1\}}
        \right]
        +2\mu_N^2                                      \\
        &\le
        2\mathbb E_{\mathbb Q}\!\left[
        \xi^2\mathbbm 1_{\{|\xi|>k_N\}}
        \right]
        +2e_N^2
        \le C k_N^{2-\alpha}L_2(k_N)
        =
        Cw_N.
\end{aligned}
\]
Because \(|a|\le C_0|u|\asymp b\) and \(2bk_N\le1\), while
\(|\bar\xi|\le Ck_N\), the density \(d\mathbb Q_a/d\mathbb Q\) is uniformly
bounded above and below.  Thus
\[
        \mathbb E_{\mathbb Q_a}[(\Delta^{(b)})^2]\le Cw_N .
\]
Then, since
\[
        \mathbb E_{\mathbb Q_a}[\Delta^{(b)}]
        =
        \frac{
        \mathbb E_{\mathbb Q}[e^{a\bar\xi}\Delta^{(b)}]}
        {\mathbb E_{\mathbb Q}[e^{a\bar\xi}]},
\]
the uniform bound on the density gives
\[
\left|
\mathbb E_{\mathbb Q_a}[\Delta^{(b)}]
\right|
\le
C\left|
\mathbb E_{\mathbb Q}[e^{a\bar\xi}\Delta^{(b)}]
\right|.
\]
Writing \(e^{a\bar\xi}=1+(e^{a\bar\xi}-1)\), and using
\(|e^{a\bar\xi}-1|\le C|a|\,|\bar\xi|\) since \(|a\bar\xi|\le C'\), we obtain
\[
\begin{aligned}
\left|
\mathbb E_{\mathbb Q_a}[\Delta^{(b)}]
\right|
&\le
C\left(
|\mathbb E_{\mathbb Q}[\Delta^{(b)}]|
+
|a|\,
\mathbb E_{\mathbb Q}[|\bar\xi|\,|\Delta^{(b)}|]
\right).
\end{aligned}
\]
The first term is bounded by \(Cb^{\alpha-1}L_1(1/b)\).  For the second one,
\(|\bar\xi|\le Ck_N\), \(|a|\le C|u|\), and
\[
        \mathbb E_{\mathbb Q}[|\Delta^{(b)}|]
        \le
        \mathbb E_{\mathbb Q}[|\xi|\mathbb I_{\{|\xi|>k_N\}}]
        +|\mu_N|
        \le 2e_N .
\]
Therefore
\[
\left|
\mathbb E_{\mathbb Q_a}[\Delta^{(b)}]
\right|
\le
C\left(
b^{\alpha-1}L_1(1/b)
+
|u|\,k_Ne_N
\right).
\]
Multiplying by \(|u|\), using \(b\asymp |u|\) and then 
\eqref{eq:tail-second-bounds-section2}, we obtain
\[
        |u|\,
        \left|
        \mathbb E_{\mathbb Q_a}[\Delta^{(b)}]
        \right|
        \le Cu^2w_N .
\]
Finally \(u\Delta^{(b)}\) is uniformly bounded.  Indeed, on the event \(\{2b|\xi|\le1\}\), and since \(b\asymp |u|\),
\[
        |u|\,|\xi|\mathbbm 1_{\{|\xi|>k_N,\ 2b|\xi|\le1\}}\le C .
\]
Moreover \(2bk_N\le1\) and \(b\asymp |u|\) imply \(|u|\le C/k_N\), and
therefore
\[
        |u\mu_N|\le |u|e_N
        \le Ck_N^{-1}e_N
        \le Ck_N^{-\alpha}L_1(k_N)\le C .
\]
Hence \(|u\Delta^{(b)}|\le C\).  For \(|x|\le C\),
\[
        e^x-1=x+x^2\int_0^1(1-t)e^{tx}\,dt ,
\]
so that \(|e^x-1-x|\le Cx^2\).  Applying this with
\(x=u\Delta^{(b)}\) gives
\[
\left|
\mathbb E_{\mathbb Q_a}[e^{u\Delta^{(b)}}]-1
\right|
\le
|u|\,
\left|
\mathbb E_{\mathbb Q_a}[\Delta^{(b)}]
\right|
+
Cu^2\mathbb E_{\mathbb Q_a}[(\Delta^{(b)})^2]
\le Cu^2w_N .
\]
On the same bounded interval, \(|e^x-1|\le C|x|\).  Thus
\[
        \mathbb E_{\mathbb Q_a}
        [(e^{u\Delta^{(b)}}-1)^2]
        \le
        Cu^2\mathbb E_{\mathbb Q_a}[(\Delta^{(b)})^2]
        \le Cu^2w_N,
\]
and the estimate with \(e^{2u\Delta^{(b)}}\) is identical.
\end{proof}

\subsection{Far-field product comparison}
For \(h\ge1\), set
\[
        \Psi_h(y):=\sup_{|x|\le h}\psi_{y-x},
        \qquad |y|>2h,
\]
and define the far-field bad event
\begin{equation}\label{$D_h$}
        D_h
        :=
        \left\{
        (i,y)\in [1,N]\times (-\infty,-2h)\cup(2h,\infty):
        2\beta_N|\xi(i,y)|\Psi_h(y)>1
        \right\}.
\end{equation}
The event \(D_h\) is estimated by a union bound.  
\[
        \mathbb Q(D_h)
        \le
        N\sum_{|y|>2h}
        \mathbb Q\left(
        |\xi|>\frac{2}{\beta_N\Psi_h(y)}
        \right).
\]
By the Potter bounds for the slowly varying function, for \(\rho>0\) small that \(r(\alpha-\rho)>1\) and \(L(t)\le C_\rho t^\rho\),
\[
        \mathbb Q(|\xi|>t)\le C t^{-\alpha}L(t) \le C_\rho t^{-(\alpha-\rho)} .
\]
Using this with \(t=2/(\beta_N\Psi_h(y))\), and then using
\(\Psi_h(y)\le C|y|^{-r}\), gives
\begin{equation}\label{eq:Dh-bound-section2}
\begin{aligned}
\mathbb Q(D_h)
&\le
C_\rho N\beta_N^{\alpha-\rho}
\sum_{|y|>2h}\Psi_h(y)^{\alpha-\rho}                       \\
&\le
C_\rho N\beta_N^{\alpha-\rho}
\sum_{|y|>2h}|y|^{-r(\alpha-\rho)}
\le
C_\rho N\beta_N^{\alpha-\rho}h^{1-r(\alpha-\rho)} .
\end{aligned}
\end{equation}

We shall use the abbreviation
\begin{equation}\label{eq:dNh-section2}
        \mathfrak d_N(h):=\beta_N^2Nw_Nh^{2H-2}.
\end{equation}

\begin{lemma}\label{lem:far-product-section2}
Let \(h=h_N\) be a positive sequence with \(h_N\to\infty\); Assume that $ \mathfrak d_N(h)\le1 $ and, for all large \(N\),
\[
        2\beta_N k_N\Psi_h(y)\le1,
        \qquad(|y|>2h).
\]
Then, we have, uniformly over all paths \(S\in\mathcal B(h)\),
\begin{equation}\label{eq:far-L2-section2}
        \frac{
        \mathbb E_{\mathbb Q}\!\left[
        e^{\beta_NH_N^{\bar\omega}(S)}
        \left(
        e^{\beta_N\Delta H_N^{\mathrm{far},h}(S)}-1
        \right)^2;
        D_h^c
        \right]
        }
        {
        \mathbb E_{\mathbb Q}
        [e^{\beta_NH_N^{\bar\omega}(S)}]
        }
        \le C\mathfrak d_N(h),
\end{equation}
\begin{equation}\label{eq:far-L1-section2}
        \frac{
        \mathbb E_{\mathbb Q}\!\left[
        e^{\beta_NH_N^{\bar\omega}(S)}
        \left|
        e^{\beta_N\Delta H_N^{\mathrm{far},h}(S)}-1
        \right|;
        D_h^c
        \right]
        }
        {
        \mathbb E_{\mathbb Q}
        [e^{\beta_NH_N^{\bar\omega}(S)}]
        }
        \le C\sqrt{\mathfrak d_N(h)},
\end{equation}
and
\begin{equation}\label{eq:far-exp-section2}
        \frac{
        \mathbb E_{\mathbb Q}\!\left[
        e^{\beta_NH_N^{\bar\omega}(S)}
        e^{\beta_N\Delta H_N^{\mathrm{far},h}(S)};
        D_h^c
        \right]
        }
        {
        \mathbb E_{\mathbb Q}
        [e^{\beta_NH_N^{\bar\omega}(S)}]
        }
        \le 1+C\mathfrak d_N(h).
\end{equation}
Consequently, for every \(A\subseteq\mathcal B(h)\),
\begin{equation}\label{eq:far-partition-L1-section2}
\begin{aligned}
&\mathbb E_{\mathbb Q}\left[
\left|
Z_N(\beta_N;\bar\omega+\Delta\omega^{\mathrm{far},h};A)
-
Z_N(\beta_N;\bar\omega;A)
\right|;
D_h^c
\right]                                                      \\
&\qquad\le
C\sqrt{\mathfrak d_N(h)}\,
\mathbb E_{\mathbb Q}[Z_N(\beta_N;\bar\omega;A)]
\end{aligned}
\end{equation}
and
\begin{equation}\label{eq:far-partition-exp-section2}
        \mathbb E_{\mathbb Q}\left[
        Z_N(\beta_N;\bar\omega+\Delta\omega^{\mathrm{far},h};A);
        D_h^c
        \right]
        \le
        (1+C\mathfrak d_N(h))\,
        \mathbb E_{\mathbb Q}[Z_N(\beta_N;\bar\omega;A)] .
\end{equation}
\end{lemma}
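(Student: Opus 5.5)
The plan is to fix a path $S\in\mathcal B(h)$ and factor everything over the independent sites $(i,y)$ with $|y|>2h$. The environment $\bar\omega$ and the far field $\Delta\omega^{\mathrm{far},h}$ are both linear in the innovations. So we may write
\[
\beta_NH_N^{\bar\omega}(S)=\sum_{i,y}a_{i,y}\bar\xi(i,y),\qquad a_{i,y}:=\beta_N\psi_{y-S_i},
\]
and
\[
\beta_N\Delta H_N^{\mathrm{far},h}(S)=\sum_{i}\sum_{|y|>2h}u_{i,y}\Delta\xi(i,y),\qquad u_{i,y}:=\beta_N\psi_{y-S_i}=a_{i,y}.
\]

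\textbf{Reduction to the clipped variable.} On $D_h^c$ we have $2\beta_N|\xi(i,y)|\Psi_h(y)\le1$ for every far site. Set $b_{i,y}:=\beta_N\Psi_h(y)$. By \eqref{eq:kernel-comparability-section2}, $b_{i,y}\asymp u_{i,y}$ uniformly in $|S_i|\le h$ and $|y|>2h$, and $a_{i,y}=u_{i,y}$. The hypothesis $2\beta_Nk_N\Psi_h(y)\le1$ gives $2b_{i,y}k_N\le1$. Moreover, on $D_h^c$, $\Delta\xi(i,y)=\Delta^{(b_{i,y})}(i,y)$ for all far sites. Hence
\[
e^{\beta_NH_N^{\bar\omega}}\bigl(e^{\beta_N\Delta H^{\mathrm{far}}}-1\bigr)^2\mathbbm 1_{D_h^c}\le e^{\beta_NH_N^{\bar\omega}}\Bigl(\prod_{i,|y|>2h}e^{u_{i,y}\Delta^{(b_{i,y})}(i,y)}-1\Bigr)^2 .
\]
This dominates because the right side is nonnegative, and the same domination works for the $L^1$ and exponential versions. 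After dropping the indicator, the product structure over independent sites is restored.

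\textbf{Product computation.} Divide by $\mathbb E_{\mathbb Q}[e^{\beta_NH^{\bar\omega}}]=\prod_{i,y}\mathbb E[e^{a_{i,y}\bar\xi}]$. Then expectations become expectations under the product of tilted measures $\mathbb Q_{a_{i,y}}$, and the near sites $|y|\le2h$ drop out. Write $X_{i,y}:=e^{u_{i,y}\Delta^{(b_{i,y})}}$. These are independent under the tilted product measure, so
\[
\mathbb E\Bigl[\bigl(\textstyle\prod X-1\bigr)^2\Bigr]=\prod\mathbb E_{\mathbb Q_a}[X^2]-2\prod\mathbb E_{\mathbb Q_a}[X]+1 .
\]
Lemma \ref{lem:one-site-clipped-section2} gives $\mathbb E_{\mathbb Q_a}[X^2]=1+O(u^2w_N)$ and $\mathbb E_{\mathbb Q_a}[X]=1+O(u^2w_N)$. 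Summing over sites and using \eqref{eq:kernel-sums-section2} gives
\[
\sum_{i}\sum_{|y|>2h}u_{i,y}^2w_N\le C\beta_N^2Nw_N\sum_{|y|>2h}|y|^{-2r}\le C\mathfrak d_N(h),
\]
with $\mathfrak d_N(h)\le1$. Therefore each product equals $\exp(O(\mathfrak d_N))=1+O(\mathfrak d_N)$, using $|\log(1+x)|\le2|x|$ for small $x$ (take $N$ large, or enlarge $C$). This yields \eqref{eq:far-L2-section2}, because $1+O(\mathfrak d)-2(1+O(\mathfrak d))+1=O(\mathfrak d)$.

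The subtle point is exactly this step: we need two-sided bounds on both $\prod\mathbb E[X]$ and $\prod\mathbb E[X^2]$ so that the constant terms cancel, and the lemma supplies this. The exponential bound \eqref{eq:far-exp-section2} is $\prod\mathbb E_{\mathbb Q_a}[X]\le 1+C\mathfrak d_N$, obtained directly by the same domination. The $L^1$ bound \eqref{eq:far-L1-section2} follows by Cauchy--Schwarz under the probability measure $e^{\beta_NH^{\bar\omega}}/\mathbb E[e^{\beta_NH^{\bar\omega}}]\,d\mathbb Q$ applied to \eqref{eq:far-L2-section2}.

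\textbf{Partition-function consequences.} For $A\subseteq\mathcal B(h)$, write
\[
Z_N(\beta_N;\bar\omega+\Delta\omega^{\mathrm{far},h};A)-Z_N(\beta_N;\bar\omega;A)=\sum_{S\in A}\mathbb P_N(S)e^{\beta_NH^{\bar\omega}(S)}\bigl(e^{\beta_N\Delta H^{\mathrm{far}}(S)}-1\bigr).
\]
Take absolute values inside the sum, restrict to $D_h^c$, and use Fubini. Applying \eqref{eq:far-L1-section2} pathwise, uniformly in $S$, gives \eqref{eq:far-partition-L1-section2}, since $\sum_{S\in A}\mathbb P_N(S)\mathbb E[e^{\beta_NH^{\bar\omega}(S)}]=\mathbb E[Z_N(\beta_N;\bar\omega;A)]$. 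The same argument with \eqref{eq:far-exp-section2} gives \eqref{eq:far-partition-exp-section2}.

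The only technical care needed is the pathwise uniformity of the constants. This comes from the comparability \eqref{eq:kernel-comparability-section2}, which makes $C_0$ in Lemma \ref{lem:one-site-clipped-section2} independent of $S$, $i$ and $y$. It also needs convergence of the infinite product in $y$, which is justified by the summability of $\sum_{|y|>2h}u_{i,y}^2$ and an $L^2$ truncation limit.
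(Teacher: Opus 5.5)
Your proposal is correct and follows the paper's proof essentially step for step. Both arguments tilt by $e^{\beta_N H_N^{\bar\omega}(S)}$ to obtain an independent product measure, replace $\Delta\xi$ by the clipped $\Delta^{(b_y)}$ on $D_h^c$ and drop the indicator, and apply Lemma \ref{lem:one-site-clipped-section2} with $C_0=C_\psi^2$. They then expand $\mathbb E[(\prod X-1)^2]=\prod\mathbb E[X^2]-2\prod\mathbb E[X]+1$, sum $u_{i,y}^2w_N\le C\mathfrak d_N(h)$ via the kernel tail, pass to the $L^2$ limit in the spatial cutoff $R$, and finish with Cauchy--Schwarz and Tonelli. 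The only cosmetic difference is that you control the products through $\exp\{O(\mathfrak d_N(h))\}$ instead of citing Durrett's product-difference estimate; this is equivalent here because $\sum u_{i,y}^2w_N\le C\mathfrak d_N(h)\le C$.
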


\begin{proof}
Fix \(S\in\mathcal B(h)\).  The ratios in
\eqref{eq:far-L2-section2}--\eqref{eq:far-exp-section2} are expectations under
the probability measure
\[
        d\mathbb Q^S
        :=
        \frac{e^{\beta_NH_N^{\bar\omega}(S)}}
        {\mathbb E_{\mathbb Q}[e^{\beta_NH_N^{\bar\omega}(S)}]}
        \,d\mathbb Q .
\]
Indeed,
\[
        H_N^{\bar\omega}(S)
        =
        \sum_{i=1}^N\sum_{y\in\mathbb Z}
        \psi_{y-S_i}\bar\xi(i,y),
\]
and therefore the density factorizes as
\[
        e^{\beta_NH_N^{\bar\omega}(S)}
        =
        \prod_{i=1}^N\prod_{y\in\mathbb Z}
        \exp\{\beta_N\psi_{y-S_i}\bar\xi(i,y)\}.
\]
Since variables \(\xi(i,y)\) are independent under \(\mathbb Q\), they
remain independent under \(\mathbb Q^S\).  The marginal law at
\((i,y)\) is the one-site tilted law of Lemma
\ref{lem:one-site-clipped-section2} with tilt parameter
\[
        a_{i,y}:=\beta_N\psi_{y-S_i}.
\]

For the far-field contribution, the 
coefficient at \((i,y)\) is
\[
        u_{i,y}:=\beta_N\psi_{y-S_i},
        \qquad |y|>2h.
\]
We choose the parameter
\[
        b_y:=\beta_N\Psi_h(y).
\]
Since \(S\in\mathcal B(h)\), \(|S_i|\le h\).  Hence, by
\eqref{eq:kernel-comparability-section2}, uniformly in \(S\),
\[
        |u_{i,y}|
        =
        \beta_N\psi_{y-S_i}
        \le
        \beta_N\Psi_h(y)
        =
        b_y,
        \qquad\text{and}\quad
        b_y
        \le
        \beta_N C_\psi |y|^{-r}
        \le
        C_\psi^2 |u_{i,y}|
        \qquad(|y|>2h).
\]
Thus the condition \(C_0^{-1}|u|\le b\le C_0|u|\) in Lemma
\ref{lem:one-site-clipped-section2} holds with, for instance,
\(C_0=C_\psi^2\). This constant is fixed by the kernel and does not depend on
\(N,h,y\), or on the path \(S\). Finally, since the assumption
\(
        2\beta_Nk_N\Psi_h(y)\le1
\)
is exactly \(2b_yk_N\le1\),  therefore Lemma
\ref{lem:one-site-clipped-section2} can be applied to each far-field coordinate
\((i,y)\), with \(a=a_{i,y}\), \(u=u_{i,y}\), and \(b=b_y\).
Writing
\[
        \Delta_{i,y}^{(b_y)}
        :=
        \xi(i,y)\mathbb I_{\{|\xi(i,y)|>k_N,\ 2b_y|\xi(i,y)|\le1\}}
        -\mu_N,
\]
Then, by Lemma \ref{lem:one-site-clipped-section2}, we have,
\[
        \left|
        \mathbb E_{\mathbb Q^S}
        \left[e^{u_{i,y}\Delta_{i,y}^{(b_y)}}\right]-1
        \right|
        \le
        Cu_{i,y}^2w_N,
\]
and 
\[
        \left|
        \mathbb E_{\mathbb Q^S}
        \left[e^{2u_{i,y}\Delta_{i,y}^{(b_y)}}\right]-1
        \right|
        +
        \mathbb E_{\mathbb Q^S}
        \left[
        \left(e^{u_{i,y}\Delta_{i,y}^{(b_y)}}-1\right)^2
        \right]
        \le
        Cu_{i,y}^2w_N .
\]

For $R\geq 2h$, let
\[
        \Lambda_R
        :=
        \{(i,y):1\le i\le N,\ 2h<|y|\le R\},
\]
and write
\[
        X_{i,y}
        :=
        \exp\left\{
        u_{i,y}\Delta_{i,y}^{(b_y)}
        \right\}.
\]
Then, the preceding estimates give
\begin{equation}\label{eq:X-one-site-section2}
        \left|\mathbb E_{\mathbb Q^S}[X_{i,y}]-1\right|
        +
        \left|\mathbb E_{\mathbb Q^S}[X_{i,y}^2]-1\right|
        \le
        Cu_{i,y}^2w_N .
\end{equation}
 Our goal is to
control the difference, uniformly in \(S\in\mathcal{B}(h)\), of the finite product
\[
        P_R(S):=\prod_{(i,y)\in\Lambda_R}X_{i,y}
        =
        \exp\left\{
        \sum_{(i,y)\in\Lambda_R}
        u_{i,y}\Delta_{i,y}^{(b_y)}
        \right\},
\]
which is the \(R\)-truncated far-field exponential, with 1 in  \(L^2(\mathbb Q^S)\).

By \eqref{eq:kernel-sums-section2}, since \(S\in\mathcal B(h)\),
\[
\begin{aligned}
        \sum_{(i,y)\in\Lambda_R}u_{i,y}^2w_N
        &\le
        \beta_N^2w_N
        \sum_{i=1}^N\sum_{|y|>2h}\psi_{y-S_i}^2              \\
        &\le
        C\beta_N^2Nw_N\sum_{|z|>h}\psi_z^2\\
        &\le
        C\beta_N^2Nw_Nh^{2H-2}
        =
        C\mathfrak d_N(h).
\end{aligned}
\]
Since \(\mathfrak d_N(h)\le1\), the right-hand side is bounded by a fixed
constant.  Using the independence under
\(\mathbb Q^S\), \eqref{eq:X-one-site-section2} gives
\[
\begin{aligned}
\mathbb E_{\mathbb Q^S}[(P_R(S)-1)^2]
&=
\prod_{(i,y)\in\Lambda_R}\mathbb E_{\mathbb Q^S}[X_{i,y}^2]
-2\prod_{(i,y)\in\Lambda_R}\mathbb E_{\mathbb Q^S}[X_{i,y}]
+1                                                       \\
&\le
\left|
\prod_{(i,y)\in\Lambda_R}\mathbb E_{\mathbb Q^S}[X_{i,y}^2]-1
\right|
+2\left|
\prod_{(i,y)\in\Lambda_R}\mathbb E_{\mathbb Q^S}[X_{i,y}]-1
\right|                                                  \\
&\le
C\sum_{(i,y)\in\Lambda_R}u_{i,y}^2w_N
\le
C\mathfrak d_N(h),
\end{aligned}
\]
by the product-difference estimate, see Durrett \cite[Lemma 3.4.3]{Durrett_2019}.
In a similar way, we have
\[
        \mathbb E_{\mathbb Q^S}[P_R(S)]
        =
        \prod_{(i,y)\in\Lambda_R}\mathbb E_{\mathbb Q^S}[X_{i,y}]
        \le 1+C\mathfrak d_N(h),
\]
and
\[
        \mathbb E_{\mathbb Q^S}[P_R(S)^2]
        =
        \prod_{(i,y)\in\Lambda_R}\mathbb E_{\mathbb Q^S}[X_{i,y}^2]
        \le 1+C\mathfrak d_N(h).
\]

The product \(P_R(S)\) converges in \(L^2(\mathbb Q^S)\) as \(R\to\infty\).
Indeed, for \(R'>R\), set
\[
        P_{R,R'}(S)
        :=
        \prod_{(i,y)\in\Lambda_{R'}\setminus\Lambda_R}X_{i,y}.
\]
Applying the preceding \(L^2\) estimate to this tail product gives
\[
        \mathbb E_{\mathbb Q^S}[(P_{R,R'}(S)-1)^2]
        \le
        Cw_N\sum_{i=1}^N\sum_{R<|y|\le R'}
        \beta_N^2\psi_{y-S_i}^2 .
\]
The right-hand side tends to \(0\) as \(R,R'\to\infty\), because
\[
        \sum_{i=1}^N\sum_{|y|>2h}
        \beta_N^2\psi_{y-S_i}^2w_N<\infty .
\]
Moreover, \(P_R(S)\) and \(P_{R,R'}(S)\) are independent under
\(\mathbb Q^S\), and the preceding bound on
\(\mathbb E_{\mathbb Q^S}[P_R(S)^2]\) is uniform in \(R\).  Hence
\[
\begin{aligned}
\mathbb E_{\mathbb Q^S}[(P_{R'}(S)-P_R(S))^2]
&=
\mathbb E_{\mathbb Q^S}[P_R(S)^2]\,
\mathbb E_{\mathbb Q^S}[(P_{R,R'}(S)-1)^2]
\longrightarrow0 .
\end{aligned}
\]
Thus \(P_R(S)\) converges in \(L^2(\mathbb Q^S)\); denote the limit by
\(P_\infty(S)\).  Passing to the limit in the two estimates above gives
\begin{equation}\label{eq:Pinfty-section2}
        \mathbb E_{\mathbb Q^S}[(P_\infty(S)-1)^2]\le C\mathfrak d_N(h),
        \qquad
        \mathbb E_{\mathbb Q^S}[P_\infty(S)]\le1+C\mathfrak d_N(h).
\end{equation}

We now connect this product estimate with the quantities in the statement.  On
\(D_h^c\), for every \((i,y)\) with \(|y|>2h\),
\[
        2b_y|\xi(i,y)|\le1.
\]
Therefore, on \(D_h^c\),
\[
        \Delta_{i,y}^{(b_y)}
        =
        \xi(i,y)\mathbb I_{\{|\xi(i,y)|>k_N\}}-\mu_N
        =
        \Delta\xi(i,y),
\]
and hence
\[
        P_\infty(S)
        =
        \exp\left\{
        \beta_N\Delta H_N^{\mathrm{far},h}(S)
        \right\}
        \qquad\text{on }D_h^c.
\]
Since the ratios in \eqref{eq:far-L2-section2}--\eqref{eq:far-exp-section2} are
\(\mathbb Q^S\)-expectations, \eqref{eq:Pinfty-section2} yields
\[
\begin{aligned}
&\frac{
\mathbb E_{\mathbb Q}\!\left[
e^{\beta_NH_N^{\bar\omega}(S)}
\left(e^{\beta_N\Delta H_N^{\mathrm{far},h}(S)}-1\right)^2;
D_h^c
\right]}
{\mathbb E_{\mathbb Q}[e^{\beta_NH_N^{\bar\omega}(S)}]}     =
\mathbb E_{\mathbb Q^S}[(P_\infty(S)-1)^2;D_h^c]
\le C\mathfrak d_N(h),
\end{aligned}
\]
which is \eqref{eq:far-L2-section2}.  Similarly,
\[
\frac{
\mathbb E_{\mathbb Q}\!\left[
e^{\beta_NH_N^{\bar\omega}(S)}
e^{\beta_N\Delta H_N^{\mathrm{far},h}(S)};
D_h^c
\right]}
{\mathbb E_{\mathbb Q}[e^{\beta_NH_N^{\bar\omega}(S)}]}
=
\mathbb E_{\mathbb Q^S}[P_\infty(S);D_h^c]
\le
1+C\mathfrak d_N(h),
\]
which is \eqref{eq:far-exp-section2}.  Finally,
\eqref{eq:far-L1-section2} follows from \eqref{eq:far-L2-section2} by
Cauchy's inequality under \(\mathbb Q^S\).

It remains to pass from the fixed-path estimates to partition functions.  By
Tonelli's theorem and the definition of \(Z_N\),
\[
\begin{aligned}
&\mathbb E_{\mathbb Q}\left[
\left|
Z_N(\beta_N;\bar\omega+\Delta\omega^{\mathrm{far},h};A)
-
Z_N(\beta_N;\bar\omega;A)
\right|;
D_h^c
\right]                                                   \\
&\quad\le
\int_A
\mathbb E_{\mathbb Q}\left[
e^{\beta_NH_N^{\bar\omega}(S)}
\left|
e^{\beta_N\Delta H_N^{\mathrm{far},h}(S)}-1
\right|;
D_h^c
\right]\,\mathbb P_N(dS).
\end{aligned}
\]
Applying \eqref{eq:far-L1-section2} to the integrand gives
\[
\begin{aligned}
&\mathbb E_{\mathbb Q}\left[
\left|
Z_N(\beta_N;\bar\omega+\Delta\omega^{\mathrm{far},h};A)
-
Z_N(\beta_N;\bar\omega;A)
\right|;
D_h^c
\right]                                                   \\
&\quad\le
C\sqrt{\mathfrak d_N(h)}
\int_A
\mathbb E_{\mathbb Q}
[e^{\beta_NH_N^{\bar\omega}(S)}]\,\mathbb P_N(dS)          \\
&\quad=
C\sqrt{\mathfrak d_N(h)}
\mathbb E_{\mathbb Q}[Z_N(\beta_N;\bar\omega;A)],
\end{aligned}
\]
which is \eqref{eq:far-partition-L1-section2}.  Similarly,
\[
\begin{aligned}
&\mathbb E_{\mathbb Q}\left[
Z_N(\beta_N;\bar\omega+\Delta\omega^{\mathrm{far},h};A);
D_h^c
\right]                                                   \\
&\quad=
\int_A
\mathbb E_{\mathbb Q}\left[
e^{\beta_NH_N^{\bar\omega}(S)}
e^{\beta_N\Delta H_N^{\mathrm{far},h}(S)};
D_h^c
\right]\,\mathbb P_N(dS)                                  \\
&\quad\le
(1+C\mathfrak d_N(h))
\int_A
\mathbb E_{\mathbb Q}
[e^{\beta_NH_N^{\bar\omega}(S)}]\,\mathbb P_N(dS)          \\
&\quad=
(1+C\mathfrak d_N(h))
\mathbb E_{\mathbb Q}[Z_N(\beta_N;\bar\omega;A)],
\end{aligned}
\]
which is \eqref{eq:far-partition-exp-section2}.
\end{proof}

\subsection{The multiscale comparison}

For $N, m\in \mathbb N_0$ and a sequence $\{h_j\}_{j=0}^m$ with 
\[
        0=h_0<h_1<\cdots<h_m,
        \qquad
        h_{m-1}<N\le h_m,
\]
define a sequence of boxes
\[
        \mathcal B_j:=\mathcal B(h_j),
        \qquad
        \mathcal A_j:=\mathcal B_j\setminus\mathcal B_{j-1}.
\]
Since every \(N\)-step nearest-neighbor path satisfies \(|S_i|\le N\), the sets
\(\mathcal B_1,\mathcal A_2,\ldots,\mathcal A_m\) cover the whole path space.
By Levy's inequality and Hoeffding's inequality,
\begin{equation}\label{eq:rw-entropy-section2}
        \mathbb P_N(\mathcal B_j^c)\le4e^{-I_j/2},
        \qquad
        \mathbb P_N(\mathcal A_j)\le4e^{-I_{j-1}/2}
        \quad(j\ge2),
\end{equation}
where $
        I_j:=\frac{h_j^2}{N}, j=2,\dots,m.$ 

Recall
\[
        R_N:=
        \frac{Z_N(\beta_N;\omega)}
        {Z_N(\beta_N;\bar\omega)} 
\]
and 
\[
        G_N(\delta)
        :=
        \left\{
        Z_N(\beta_N;\bar\omega)
        \ge
        \delta\,
        \mathbb E_{\mathbb Q}[Z_N(\beta_N;\bar\omega)]
        \right\}, \quad \delta >0.
\]

Let
\[
        F_N:=
        \frac{
        \left|
        Z_N(\beta_N;\omega;\mathcal B_1)
        -
        Z_N(\beta_N;\bar\omega;\mathcal B_1)
        \right|}
        {Z_N(\beta_N;\bar\omega)},
\]
\[
        U_N:=
        \sum_{j=2}^m
        \frac{Z_N(\beta_N;\omega;\mathcal A_j)}
        {Z_N(\beta_N;\bar\omega)},
        \qquad
        L_N:=
        \sum_{j=2}^m
        \frac{Z_N(\beta_N;\bar\omega;\mathcal A_j)}
        {Z_N(\beta_N;\bar\omega)} .
\]
Since
\(\mathcal B_1,\mathcal A_2,\ldots,\mathcal A_m\) cover the whole path space,
\[
\begin{aligned}
R_N-1
&=
\frac{
Z_N(\beta_N;\omega;\mathcal B_1)
-
Z_N(\beta_N;\bar\omega;\mathcal B_1)}
{Z_N(\beta_N;\bar\omega)}                                      \\
&\quad+
\sum_{j=2}^m
\frac{
Z_N(\beta_N;\omega;\mathcal A_j)
-
Z_N(\beta_N;\bar\omega;\mathcal A_j)}
{Z_N(\beta_N;\bar\omega)}
\le F_N+U_N,
\end{aligned}
\]
because the terms \(-Z_N(\beta_N;\bar\omega;\mathcal A_j)\) are non-positive.
Similarly,
\[
\begin{aligned}
1-R_N
&=
\frac{
Z_N(\beta_N;\bar\omega;\mathcal B_1)
-
Z_N(\beta_N;\omega;\mathcal B_1)}
{Z_N(\beta_N;\bar\omega)}                                      \\
&\quad+
\sum_{j=2}^m
\frac{
Z_N(\beta_N;\bar\omega;\mathcal A_j)
-
Z_N(\beta_N;\omega;\mathcal A_j)}
{Z_N(\beta_N;\bar\omega)}
\le F_N+L_N,
\end{aligned}
\]
Hence
\begin{equation}\label{eq:ratio-reduction-section2}
        R_N-1\le F_N+U_N,
        \qquad
        1-R_N\le F_N+L_N .
\end{equation}

We next give two lemmas for later use. The precise choices of the scales \(h_j\) 
  and \(\tau_N\) will be
 specified till in the proof of Proposition \ref{truncated-error}.

\begin{lemma}\label{lem:first-block-section2}
Let \(\tau_N>0\).  Assume that, for all large \(N\), Lemma
\ref{lem:far-product-section2} can be applied at the scale \(h_1\).  Put
\[
        s_{1,N}:=C_{\mathrm{near}}\beta_NN|\mu_N|h_1^{1-r},
\]
where \(C_{\mathrm{near}}\) is fixed large enough that
\eqref{eq:near-centered-only-section2} implies
\[
        |\beta_N\Delta H_N^{\mathrm{near},h_1}(S)|
        \le s_{1,N},
        \qquad S\in\mathcal B_1,
\]
on \(A_{h_1}^c\).  When \(s_{1,N}\le1\), 
then, for every fixed \(\varepsilon,\delta>0\),
\begin{equation}\label{eq:first-block-prob-section2}
\begin{aligned}
\mathbb Q(F_N>\varepsilon\tau_N)
&\le
\mathbb Q(G_N(\delta)^c)
+
\mathbb Q(A_{h_1})
+
\mathbb Q(D_{h_1})                                     
+
\frac{C}{\delta\varepsilon\tau_N}
\left(s_{1,N}+\sqrt{\mathfrak d_N(h_1)}\right),
\end{aligned}
\end{equation}
where $A_h$ and $D_h$ are given in \eqref{$A_h$} and \eqref{$D_h$}, respectively.
\end{lemma}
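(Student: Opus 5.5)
We intersect with good events and use Markov's inequality. First,
\[
\mathbb Q(F_N>\varepsilon\tau_N)\le \mathbb Q(G_N(\delta)^c)+\mathbb Q(A_{h_1})+\mathbb Q(D_{h_1})+\mathbb Q\bigl(F_N>\varepsilon\tau_N;\ E_N\bigr),
\qquad E_N:=G_N(\delta)\cap A_{h_1}^c\cap D_{h_1}^c .
\]
On \(G_N(\delta)\) the denominator satisfies \(Z_N(\beta_N;\bar\omega)\ge\delta\,\mathbb E_{\mathbb Q}[Z_N(\beta_N;\bar\omega)]\). Hence, by Markov,
\[
\mathbb Q(F_N>\varepsilon\tau_N;E_N)\le \frac{1}{\delta\varepsilon\tau_N\,\mathbb E_{\mathbb Q}[Z_N(\beta_N;\bar\omega)]}\,\mathbb E_{\mathbb Q}\Bigl[\bigl|Z_N(\beta_N;\omega;\mathcal B_1)-Z_N(\beta_N;\bar\omega;\mathcal B_1)\bigr|;\,A_{h_1}^c\cap D_{h_1}^c\Bigr].
\]
It therefore suffices to bound the last expectation by \(C(s_{1,N}+\sqrt{\mathfrak d_N(h_1)})\,\mathbb E_{\mathbb Q}[Z_N(\beta_N;\bar\omega;\mathcal B_1)]\). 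The stated bound then follows, since \(\mathbb E_{\mathbb Q}[Z_N(\beta_N;\bar\omega;\mathcal B_1)]\le\mathbb E_{\mathbb Q}[Z_N(\beta_N;\bar\omega)]\).

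For \(S\in\mathcal B_1\), split the difference of Boltzmann weights telescopically:
\[
e^{\beta_N H^{\omega}_N(S)}-e^{\beta_NH^{\bar\omega}_N(S)}
= e^{\beta_N H^{\bar\omega}_N}e^{\beta_N\Delta H^{\mathrm{far},h_1}_N}\bigl(e^{\beta_N\Delta H^{\mathrm{near},h_1}_N}-1\bigr)
+e^{\beta_N H^{\bar\omega}_N}\bigl(e^{\beta_N\Delta H^{\mathrm{far},h_1}_N}-1\bigr).
\]

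For the first (near) term, work on \(A_{h_1}^c\). By the choice of \(C_{\mathrm{near}}\) we have \(|\beta_N\Delta H^{\mathrm{near},h_1}_N(S)|\le s_{1,N}\le1\), so \(|e^{x}-1|\le e|x|\) gives \(|e^{\beta_N\Delta H^{\mathrm{near}}}-1|\le e\,s_{1,N}\) deterministically. What remains is
\[
\mathbb E_{\mathbb Q}\bigl[e^{\beta_NH^{\bar\omega}_N(S)}e^{\beta_N\Delta H^{\mathrm{far},h_1}_N(S)};\,A_{h_1}^c\cap D_{h_1}^c\bigr].
\]
Drop the \(A_{h_1}^c\) restriction, which is allowed since the integrand is nonnegative. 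Then \eqref{eq:far-exp-section2} bounds this by \((1+C\mathfrak d_N(h_1))\,\mathbb E_{\mathbb Q}[e^{\beta_NH^{\bar\omega}_N(S)}]\le 2\,\mathbb E_{\mathbb Q}[e^{\beta_NH^{\bar\omega}_N(S)}]\), using \(\mathfrak d_N(h_1)\le1\).

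For the second (far) term, again drop \(A_{h_1}^c\) and apply \eqref{eq:far-L1-section2} path by path. This gives the bound \(C\sqrt{\mathfrak d_N(h_1)}\,\mathbb E_{\mathbb Q}[e^{\beta_NH^{\bar\omega}_N(S)}]\).

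Integrating over \(S\in\mathcal B_1\) with Tonelli, as at the end of the proof of Lemma \ref{lem:far-product-section2}, yields the claimed bound on the expectation.

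The only delicate point is the order of the two corrections in the near term. The near factor must be bounded deterministically, since its deterministic bound holds on \(A_{h_1}^c\). This leaves a nonnegative far-tilted weight, which is handled by the exponential-moment estimate rather than the \(L^1\) one. One also checks that the near-field variables indexed by \(|y|\le2h_1\) and the far-field variables are distinct, so no independence is actually needed once the near factor is bounded pointwise.
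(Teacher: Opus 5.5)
Your proposal is correct and is essentially the paper's proof. Both restrict to $G_N(\delta)\cap A_{h_1}^c\cap D_{h_1}^c$, apply Markov's inequality, bound the near-field correction deterministically by $s_{1,N}$ on $A_{h_1}^c$, control the far field path by path through Lemma \ref{lem:far-product-section2}, and integrate with Tonelli. The one difference is the order of the telescoping. The paper writes $e^{X+Y}-1=e^{X}(e^{Y}-1)+(e^{X}-1)$, with $X$ the near term and $Y$ the far term, so $e^{X}\le e$ is deterministic and only the $L^1$ bound \eqref{eq:far-L1-section2} is needed; your ordering additionally uses \eqref{eq:far-exp-section2}. Both orderings work, so the order is not actually the delicate point you suggest.
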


\begin{proof}
For any set \(A\) of paths and any \(h\),
\[
\begin{aligned}
&Z_N(\beta_N;\omega;A)-Z_N(\beta_N;\bar\omega;A)             \\
&\quad =
\int_A
\exp\{\beta_NH_N^{\bar\omega}(S)\}
\left[
\exp\{\beta_N\Delta H_N^{\mathrm{near},h}(S)
      +\beta_N\Delta H_N^{\mathrm{far},h}(S)\}
-1
\right]\,\mathbb P_N(dS).
\end{aligned}
\]
This identity is the point at which the near--far decomposition enters the
comparison.

For the first block we take \(A=\mathcal B_1\) and \(h=h_1\).  On
\(A_{h_1}^c\), \eqref{eq:near-centered-only-section2} gives, uniformly in
\(S\in\mathcal B_1\),
\[
        |\beta_N\Delta H_N^{\mathrm{near},h_1}(S)|
        \le s_{1,N}.
\]
Hence, on \(A_{h_1}^c\cap D_{h_1}^c\),
write
\[
        X(S):=\beta_N\Delta H_N^{\mathrm{near},h_1}(S),
        \qquad
        Y(S):=\beta_N\Delta H_N^{\mathrm{far},h_1}(S).
\]
Then \(|X(S)|\le s_{1,N}\le1\).  Using the identity
\[
        e^{X+Y}-1=e^X(e^Y-1)+(e^X-1),
\]
we get
\[
\begin{aligned}
&\left|
\exp\{\beta_N\Delta H_N^{\mathrm{near},h_1}(S)
      +\beta_N\Delta H_N^{\mathrm{far},h_1}(S)\}
-1
\right|                                                     \\
&\qquad\le
e^{|X(S)|}
\left|e^{Y(S)}-1\right|
+
\left|e^{X(S)}-1\right|                                    \\
&\qquad\le
e^{s_{1,N}}
\left|e^{\beta_N\Delta H_N^{\mathrm{far},h_1}(S)}-1\right|
+
\left(e^{s_{1,N}}-1\right)\\
&\qquad  \le
C\left(
\left|e^{\beta_N\Delta H_N^{\mathrm{far},h_1}(S)}-1\right|
+s_{1,N}
\right).
\end{aligned}
\]
Multiplying by \(e^{\beta_NH_N^{\bar\omega}(S)}\) and integrating over
\(\mathcal B_1\), we obtain, on \(A_{h_1}^c\cap D_{h_1}^c\),
\[
\begin{aligned}
&\left|
Z_N(\beta_N;\omega;\mathcal B_1)
-
Z_N(\beta_N;\bar\omega;\mathcal B_1)
\right|                                                     \\
&\qquad\le
C\int_{\mathcal B_1}
e^{\beta_NH_N^{\bar\omega}(S)}
\left|e^{\beta_N\Delta H_N^{\mathrm{far},h_1}(S)}-1\right|
\mathbb P_N(dS)                                             \\
&\qquad\quad
+Cs_{1,N}
\int_{\mathcal B_1}
e^{\beta_NH_N^{\bar\omega}(S)}
\mathbb P_N(dS).
\end{aligned}
\]
After taking \(\mathbb Q\)-expectation with the indicator
\(A_{h_1}^c\cap D_{h_1}^c\), we have,
by \eqref{eq:far-L1-section2},
\[
\begin{aligned}
&\int_{\mathcal B_1}
\mathbb E_{\mathbb Q}\left[
e^{\beta_NH_N^{\bar\omega}(S)}
\left|e^{\beta_N\Delta H_N^{\mathrm{far},h_1}(S)}-1\right|;
D_{h_1}^c
\right]\mathbb P_N(dS)                                      \\
&\qquad\le
C\sqrt{\mathfrak d_N(h_1)}
\int_{\mathcal B_1}
\mathbb E_{\mathbb Q}
\left[e^{\beta_NH_N^{\bar\omega}(S)}\right]\mathbb P_N(dS)  \\
&\qquad=
C\sqrt{\mathfrak d_N(h_1)}
\mathbb E_{\mathbb Q}[Z_N(\beta_N;\bar\omega;\mathcal B_1)] .
\end{aligned}
\]
Hence
\begin{equation}\label{eq:first-block-expectation-section2}
\begin{aligned}
&\mathbb E_{\mathbb Q}\left[
\left|
Z_N(\beta_N;\omega;\mathcal B_1)
-
Z_N(\beta_N;\bar\omega;\mathcal B_1)
\right|;
A_{h_1}^c\cap D_{h_1}^c
\right]                                                      \\
&\qquad\le
C\left(s_{1,N}+\sqrt{\mathfrak d_N(h_1)}\right)
\mathbb E_{\mathbb Q}[Z_N(\beta_N;\bar\omega;\mathcal B_1)] .
\end{aligned}
\end{equation}
On \(G_N(\delta)\), the denominator in \(F_N\) is at least
\(\delta\mathbb E_{\mathbb Q}[Z_N(\beta_N;\bar\omega)]\).  Therefore
\[
\begin{aligned}
\mathbb Q(F_N>\varepsilon\tau_N)
&\le
\mathbb Q(G_N(\delta)^c)+\mathbb Q(A_{h_1})+\mathbb Q(D_{h_1}) \\
&\quad+
\mathbb Q\left(
\left|
Z_N(\beta_N;\omega;\mathcal B_1)
-
Z_N(\beta_N;\bar\omega;\mathcal B_1)
\right|
>
\delta\varepsilon\tau_N
\mathbb E_{\mathbb Q}[Z_N(\beta_N;\bar\omega)];
A_{h_1}^c\cap D_{h_1}^c
\right)                                                       \\
&\le
\mathbb Q(G_N(\delta)^c)+\mathbb Q(A_{h_1})+\mathbb Q(D_{h_1}) \\
&\quad+
\frac{
\mathbb E_{\mathbb Q}\left[
\left|
Z_N(\beta_N;\omega;\mathcal B_1)
-
Z_N(\beta_N;\bar\omega;\mathcal B_1)
\right|;
A_{h_1}^c\cap D_{h_1}^c
\right]}
{\delta\varepsilon\tau_N
\mathbb E_{\mathbb Q}[Z_N(\beta_N;\bar\omega)]}.
\end{aligned}
\]
Finally, using \eqref{eq:first-block-expectation-section2} and the identity \eqref{eq:stationarity-partition-section2},
\[
        \mathbb E_{\mathbb Q}[Z_N(\beta_N;\bar\omega;\mathcal B_1)]
        =
        \mathbb P_N(\mathcal B_1)
        \mathbb E_{\mathbb Q}[Z_N(\beta_N;\bar\omega)]
        \le
        \mathbb E_{\mathbb Q}[Z_N(\beta_N;\bar\omega)],
\]
which gives \eqref{eq:first-block-prob-section2}.
\end{proof}

\begin{lemma}\label{lem:outer-shell-section2}
Let \(\tau_N>0\).  Assume that, for all large \(N\), Lemma
\ref{lem:far-product-section2} can be applied at the scales
\(h_2,\ldots,h_m\).  Then, for every fixed \(\varepsilon,\delta>0\),
\begin{equation}\label{eq:upper-shell-section2}
\begin{aligned}
\mathbb Q(U_N>\varepsilon\tau_N)
&\le
\mathbb Q(G_N(\delta)^c)
+
\sum_{j=2}^m
\left[
C\frac{\beta_NNh_je_N}{I_{j-1}}
+
\mathbb Q(D_{h_j})
\right]                                                     \\
&\quad
+
\frac{C}{\delta\varepsilon\tau_N}
\sum_{j=2}^m
\exp\{-I_{j-1}/4+C\mathfrak d_N(h_j)\},
\end{aligned}
\end{equation}
and
\begin{equation}\label{eq:lower-shell-section2}
\begin{aligned}
\mathbb Q(L_N>\varepsilon\tau_N)
&\le
\mathbb Q(G_N(\delta)^c)
+
\frac{C}{\delta\varepsilon\tau_N}
\sum_{j=2}^m e^{-I_{j-1}/2}.
\end{aligned}
\end{equation}
\end{lemma}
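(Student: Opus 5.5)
The plan is to treat the lower-shell bound \eqref{eq:lower-shell-section2} first, since it needs no environment comparison, and then the upper-shell bound \eqref{eq:upper-shell-section2}, which combines the near/far decomposition with the entropy of the random walk.

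\emph{Lower shells.} On \(G_N(\delta)\) the denominator of \(L_N\) is at least \(\delta\,\mathbb E_{\mathbb Q}[Z_N(\beta_N;\bar\omega)]\). Hence
\[
\mathbb Q(L_N>\varepsilon\tau_N)\le\mathbb Q(G_N(\delta)^c)+\mathbb Q\Big(\sum_{j=2}^m Z_N(\beta_N;\bar\omega;\mathcal A_j)>\delta\varepsilon\tau_N\,\mathbb E_{\mathbb Q}[Z_N(\beta_N;\bar\omega)]\Big).
\]
Apply Markov's inequality to the second term. Then use the identity \eqref{eq:stationarity-partition-section2}, \(\mathbb E_{\mathbb Q}[Z_N(\beta_N;\bar\omega;\mathcal A_j)]=\mathbb P_N(\mathcal A_j)\mathbb E_{\mathbb Q}[Z_N(\beta_N;\bar\omega)]\), together with the entropy bound \eqref{eq:rw-entropy-section2}, \(\mathbb P_N(\mathcal A_j)\le 4e^{-I_{j-1}/2}\). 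This gives \eqref{eq:lower-shell-section2} directly.

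\emph{Upper shells, per-shell reduction.} Fix \(j\ge2\) and work with \(h=h_j\), using \(\mathcal A_j\subseteq\mathcal B(h_j)\). For \(S\in\mathcal A_j\), split
\[
\beta_N\Delta H_N(S)=\beta_N\Delta H_N^{\mathrm{near},h_j}(S)+\beta_N\Delta H_N^{\mathrm{far},h_j}(S).
\]
The near part is bounded uniformly in \(S\in\mathcal B(h_j)\) by \(\beta_N T_{h_j}\), by \eqref{eq:near-path-bound-section2}. Introduce the event
\[
E_j:=\{\beta_N T_{h_j}\le I_{j-1}/4\}.
\]
By Markov's inequality and \eqref{eq:near-first-moment-section2},
\[
\mathbb Q(E_j^c)\le 4\beta_N\mathbb E_{\mathbb Q}[T_{h_j}]/I_{j-1}\le C\beta_N N h_j e_N/I_{j-1},
\]
which is the first summand in \eqref{eq:upper-shell-section2}. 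On \(E_j\) we have the pathwise bound
\[
Z_N(\beta_N;\omega;\mathcal A_j)\le e^{I_{j-1}/4}\,Z_N(\beta_N;\bar\omega+\Delta\omega^{\mathrm{far},h_j};\mathcal A_j).
\]
Since Lemma \ref{lem:far-product-section2} applies at scale \(h_j\), \eqref{eq:far-partition-exp-section2} and \eqref{eq:stationarity-partition-section2} give
\[
\mathbb E_{\mathbb Q}\big[Z_N(\beta_N;\bar\omega+\Delta\omega^{\mathrm{far},h_j};\mathcal A_j);D_{h_j}^c\big]\le(1+C\mathfrak d_N(h_j))\,\mathbb P_N(\mathcal A_j)\,\mathbb E_{\mathbb Q}[Z_N(\beta_N;\bar\omega)].
\]
Using \(1+x\le e^x\) and \(\mathbb P_N(\mathcal A_j)\le 4e^{-I_{j-1}/2}\), this is at most \(4\exp\{-I_{j-1}/2+C\mathfrak d_N(h_j)\}\,\mathbb E_{\mathbb Q}[Z_N(\beta_N;\bar\omega)]\).

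\emph{Assembling the upper bound.} Set \(\Omega^\ast:=G_N(\delta)\cap\bigcap_{j\ge2}(E_j\cap D_{h_j}^c)\). Off \(\Omega^\ast\) we pay
\[
\mathbb Q(G_N(\delta)^c)+\sum_{j\ge2}\big[\mathbb Q(E_j^c)+\mathbb Q(D_{h_j})\big].
\]
On \(\Omega^\ast\) we have
\[
U_N\le\delta^{-1}\sum_{j\ge2}e^{I_{j-1}/4}\,Z_N(\beta_N;\bar\omega+\Delta\omega^{\mathrm{far},h_j};\mathcal A_j)\,\mathbb 1_{D_{h_j}^c}\big/\mathbb E_{\mathbb Q}[Z_N(\beta_N;\bar\omega)].
\]
Apply Markov's inequality to this upper bound and use the per-shell expectation estimate. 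The net exponent is \(-I_{j-1}/2+I_{j-1}/4=-I_{j-1}/4\), which yields the last term of \eqref{eq:upper-shell-section2}.

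\emph{Main point of care.} The delicate step is making sure the near field is handled pathwise and uniformly over \(S\in\mathcal A_j\). Only then can it be pulled out as the deterministic factor \(e^{I_{j-1}/4}\) before taking expectations. This is why the Markov bound is applied to \(T_{h_j}\), which dominates the near field for every path in \(\mathcal B(h_j)\), rather than to a path-dependent quantity. The remaining check is that the indicators \(\mathbb 1_{E_j}\) and \(\mathbb 1_{G_N(\delta)}\) can simply be bounded by \(1\) after the pathwise comparison, so only \(\mathbb 1_{D_{h_j}^c}\) needs to remain inside the expectation where \eqref{eq:far-partition-exp-section2} is invoked.
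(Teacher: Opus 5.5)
Your proposal is correct and follows the paper's proof essentially step for step: the same event \(E_j=\{\beta_N T_{h_j}\le I_{j-1}/4\}\) controlled by Markov's inequality and \eqref{eq:near-first-moment-section2}, the same pathwise factor \(e^{I_{j-1}/4}\) for the near field, the far-field bound \eqref{eq:far-partition-exp-section2} combined with \eqref{eq:stationarity-partition-section2} and \eqref{eq:rw-entropy-section2}, and the same good event \(G_N(\delta)\cap\bigcap_{j}(E_j\cap D_{h_j}^c)\) before the final Markov step; the lower-shell bound is also handled identically. The only cosmetic difference is that the paper keeps the indicator \(E_j\cap D_{h_j}^c\) on \(Z_N(\beta_N;\omega;\mathcal A_j)\) and bounds that restricted expectation directly, whereas you first dominate \(U_N\) on \(\Omega^\ast\) and then apply Markov, which is equivalent.
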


\begin{proof}
Fix \(j\ge2\). If \(S\in\mathcal A_j\), then
\[
        e^{\beta_NH_N^\omega(S)}
        =
        e^{\beta_NH_N^{\bar\omega}(S)}
        e^{\beta_N\Delta H_N^{\mathrm{near},h_j}(S)}
        e^{\beta_N\Delta H_N^{\mathrm{far},h_j}(S)} .
\]
Recall $T_h$ in \eqref{eq:near-path-bound-section2}. Set
\[
        E_j:=\{\beta_NT_{h_j}\le I_{j-1}/4\}.
\]
On \(E_j\), the near-field bound \eqref{eq:near-path-bound-section2} gives
\[
        e^{\beta_N\Delta H_N^{\mathrm{near},h_j}(S)}
        \le e^{\beta_NT_{h_j}}\le e^{I_{j-1}/4},
        \qquad S\in\mathcal A_j .
\]
Therefore, on \(E_j\cap D_{h_j}^c\),
\[
        Z_N(\beta_N;\omega;\mathcal A_j)
        \le
        e^{I_{j-1}/4}
        Z_N(\beta_N;\bar\omega+\Delta\omega^{\mathrm{far},h_j};
        \mathcal A_j).
\]
By \eqref{eq:near-first-moment-section2} and Markov's inequality,
\[
        \mathbb Q(E_j^c)
        \le
        C\frac{\beta_NNh_je_N}{I_{j-1}}.
\]
Taking \(\mathbb Q\)-expectation and using 
\eqref{eq:far-partition-exp-section2}, we obtain
\[
\begin{aligned}
&\mathbb E_{\mathbb Q}\left[
Z_N(\beta_N;\omega;\mathcal A_j);
E_j\cap D_{h_j}^c
\right]                                                       \\
&\qquad\le
e^{I_{j-1}/4}
\mathbb E_{\mathbb Q}\left[
Z_N(\beta_N;\bar\omega+\Delta\omega^{\mathrm{far},h_j};
\mathcal A_j);
D_{h_j}^c
\right]                                                       \\
&\qquad\le
e^{I_{j-1}/4}
(1+C\mathfrak d_N(h_j))
\mathbb E_{\mathbb Q}[Z_N(\beta_N;\bar\omega;\mathcal A_j)] .
\end{aligned}
\]
By identity \eqref{eq:stationarity-partition-section2} and \eqref{eq:rw-entropy-section2},
\[
        \mathbb E_{\mathbb Q}
        [Z_N(\beta_N;\bar\omega;\mathcal A_j)]
        =
        \mathbb P_N(\mathcal A_j)
        \mathbb E_{\mathbb Q}[Z_N(\beta_N;\bar\omega)]
        \le
        4e^{-I_{j-1}/2}
        \mathbb E_{\mathbb Q}[Z_N(\beta_N;\bar\omega)] .
\]
Hence
\begin{equation}\label{eq:shell-original-expectation-section2}
\begin{aligned}
&\mathbb E_{\mathbb Q}\left[
Z_N(\beta_N;\omega;\mathcal A_j);
E_j\cap D_{h_j}^c
\right]                                                       \\
&\qquad\le
C\exp\{-I_{j-1}/4+C\mathfrak d_N(h_j)\}
\mathbb E_{\mathbb Q}[Z_N(\beta_N;\bar\omega)] .
\end{aligned}
\end{equation}
We now remove \(G_N(\delta)^c\), the near-field bad events \(E_j^c\), and the
far-field bad events \(D_{h_j}\). Let
\[
        \mathcal G:=
        G_N(\delta)\cap
        \bigcap_{j=2}^m E_j\cap
        \bigcap_{j=2}^m D_{h_j}^c .
\]
Then
\[
        \mathbb Q(U_N>\varepsilon\tau_N)
        \le
        \mathbb Q(\mathcal G^c)
        +
        \mathbb Q(U_N>\varepsilon\tau_N;\mathcal G),
\]
and, by the union bound and the estimate on \(E_j^c\),
\[
        \mathbb Q(\mathcal G^c)
        \le
        \mathbb Q(G_N(\delta)^c)
        +
        \sum_{j=2}^m
        \left[
        C\frac{\beta_NNh_je_N}{I_{j-1}}
        +
        \mathbb Q(D_{h_j})
        \right].
\]
On \(G_N(\delta)\),
\[
        Z_N(\beta_N;\bar\omega)
        \ge
        \delta\mathbb E_{\mathbb Q}[Z_N(\beta_N;\bar\omega)].
\]
Hence, on \(\mathcal G\),
\[
        U_N
        =
        \sum_{j=2}^m
        \frac{Z_N(\beta_N;\omega;\mathcal A_j)}
        {Z_N(\beta_N;\bar\omega)}
        \le
        \frac{
        \sum_{j=2}^m
        Z_N(\beta_N;\omega;\mathcal A_j)}
        {\delta\mathbb E_{\mathbb Q}
        [Z_N(\beta_N;\bar\omega)]}.
\]
Therefore Markov's inequality gives
\[
\begin{aligned}
&\mathbb Q(U_N>\varepsilon\tau_N;\mathcal G)                  \\
&\qquad\le
\frac{1}{
\delta\varepsilon\tau_N
\mathbb E_{\mathbb Q}[Z_N(\beta_N;\bar\omega)]}
\sum_{j=2}^m
\mathbb E_{\mathbb Q}\left[
Z_N(\beta_N;\omega;\mathcal A_j);
E_j\cap D_{h_j}^c
\right].
\end{aligned}
\]
Using \eqref{eq:shell-original-expectation-section2} in the last display gives
\[
\begin{aligned}
\mathbb Q(U_N>\varepsilon\tau_N)
&\le
\mathbb Q(G_N(\delta)^c)
+
\sum_{j=2}^m
\left[
C\frac{\beta_NNh_je_N}{I_{j-1}}
+
\mathbb Q(D_{h_j})
\right]                                                     \\
&\quad
+
\frac{C}{\delta\varepsilon\tau_N}
\sum_{j=2}^m
\exp\{-I_{j-1}/4+C\mathfrak d_N(h_j)\}.
\end{aligned}
\]
This proves \eqref{eq:upper-shell-section2}.

For \(L_N\), each
\(j\ge2\), by identity \eqref{eq:stationarity-partition-section2} and \eqref{eq:rw-entropy-section2} give
\begin{equation}\label{eq:shell-bar-expectation-section2}
        \mathbb E_{\mathbb Q}
        [Z_N(\beta_N;\bar\omega;\mathcal A_j)]
        =
        \mathbb P_N(\mathcal A_j)
        \mathbb E_{\mathbb Q}[Z_N(\beta_N;\bar\omega)]
        \le
        4e^{-I_{j-1}/2}
        \mathbb E_{\mathbb Q}[Z_N(\beta_N;\bar\omega)] .
\end{equation}
On \(G_N(\delta)\), Markov's inequality applied to the sum gives
\[
\begin{aligned}
\mathbb Q(L_N>\varepsilon\tau_N)
&\le
\mathbb Q(G_N(\delta)^c)
+
\frac{1}{\delta\varepsilon\tau_N}
\sum_{j=2}^m
\frac{
\mathbb E_{\mathbb Q}
[Z_N(\beta_N;\bar\omega;\mathcal A_j)]
}{
\mathbb E_{\mathbb Q}[Z_N(\beta_N;\bar\omega)]
}                                                          \\
&=
\mathbb Q(G_N(\delta)^c)
+
\frac{1}{\delta\varepsilon\tau_N}
\sum_{j=2}^m \mathbb P_N(\mathcal A_j).
\end{aligned}
\]
Equivalently, using \eqref{eq:shell-bar-expectation-section2} in the last
display gives
\eqref{eq:lower-shell-section2}.
\end{proof}

\begin{proposition}\label{truncated-error}
Assume conditions \((A)\) and \((B)\), and the positivity condition
\begin{equation}\label{eq:positivity-section2}
        \lim_{\delta\downarrow0}\limsup_{N\to\infty}
        \mathbb Q\left(
        Z_N(\beta_N;\bar\omega)
        <
        \delta\,
        \mathbb E_{\mathbb Q}[Z_N(\beta_N;\bar\omega)]
        \right)=0 .
\end{equation}
Let \(k_N\) be as in \eqref{eq:kN-section2}.  If \(\alpha>3/H\), assume
\[
        \sup_{N\ge1}\beta_NN^{H/2}<\infty .
\]
Put
\[
        \kappa:=\frac{\alpha H}{2}-\frac32>0
\]
and define
\begin{equation}
A_*(\alpha, H) := \sup_{1/2 < p < \min\{1,\, 1/2 + \kappa\}} \min\left\{
\frac12 + \kappa - p\left(H - \frac12\right),\,
\frac14 + \frac{\kappa}{2} + p(1 - H)
\right\}
\label{A_alpha_H}
\end{equation}
Then, for every \(0<a<A_*(\alpha,H)\),
\[
        N^a\left|
        \log Z_N(\beta_N;\omega)
        -
        \log Z_N(\beta_N;\bar\omega)
        \right|
        \xrightarrow{\mathbb Q}0 .
\]
If \(2<\alpha\le3/H\), assume
\[
        \beta_N=\beta\, l(N^{3/2})^{-1},
        \qquad \beta>0,
\]
and set
\[
        a_N:=\beta_NN^{H/2}.
\]
Then
\[
        \frac{
        \log Z_N(\beta_N;\omega)
        -
        \log Z_N(\beta_N;\bar\omega)}
        {a_N}
        \xrightarrow{\mathbb Q}0 .
\]
\end{proposition}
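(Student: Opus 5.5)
The plan is to pass from the logarithm to the ratio $R_N$ and then apply the reduction \eqref{eq:ratio-reduction-section2} together with Lemmas \ref{lem:first-block-section2} and \ref{lem:outer-shell-section2}, with explicit scales $h_j$ and rates $\tau_N$.

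\textbf{Step 1: from the logarithm to $R_N$.} Set $\tau_N=N^{-a}$ when $\alpha>3/H$, and $\tau_N=a_N$ when $2<\alpha\le 3/H$. In both cases $\tau_N\to0$. On the event $\{|R_N-1|\le 1/2\}$ we have $|\log R_N|\le 2|R_N-1|$. By \eqref{eq:ratio-reduction-section2},
\[
|R_N-1|\le F_N+U_N+L_N .
\]
So it suffices to show $\mathbb Q(F_N>\varepsilon\tau_N)$, $\mathbb Q(U_N>\varepsilon\tau_N)$ and $\mathbb Q(L_N>\varepsilon\tau_N)$ all tend to $0$, first letting $N\to\infty$ and then $\delta\downarrow0$. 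The term $\mathbb Q(G_N(\delta)^c)$ that appears in every bound is handled by the positivity assumption \eqref{eq:positivity-section2}.

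\textbf{Step 2: scales and the first block in the regime $\alpha>3/H$.} Take $\beta_N\asymp N^{-H/2}$, so $k_N=\beta_N^{-1}$. Fix $p\in(1/2,\min\{1,1/2+\kappa\})$ and set $h_1=N^{p}$, $h_j=2^{j-1}h_1$, stopping at the first $m$ with $h_m\ge N$ (so $m=O(\log N)$).

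The hypotheses of Lemma \ref{lem:far-product-section2} hold at every scale. Indeed $\beta_Nk_N=1$ and $\Psi_{h}(y)\le Ch^{-r}$, so $2\beta_Nk_N\Psi_h\le1$. Also $\mathfrak d_N(h_j)\le\mathfrak d_N(h_1)\to0$.

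Using \eqref{eq:tail-basic-bounds-section2}, Potter bounds, and $1-\alpha H/2=-1/2-\kappa$, the quantities in Lemma \ref{lem:first-block-section2} satisfy, up to $N^{o(1)}$ factors:
\begin{itemize}
\item $\beta_NN|\mu_N|\lesssim N^{-1/2-\kappa}$, hence $s_{1,N}\lesssim N^{-(1/2+\kappa-p(H-1/2))}$;
\item $\sqrt{\mathfrak d_N(h_1)}\lesssim N^{-(1/4+\kappa/2+p(1-H))}$;
\item $\mathbb Q(A_{h_1})\lesssim Nh_1q_N\lesssim N^{p-1/2-\kappa}\to0$, using $p<1/2+\kappa$;
\item by \eqref{eq:Dh-bound-section2}, $\mathbb Q(D_{h_1})\lesssim N^{1-H(\alpha-\rho)/2}h_1^{1-r(\alpha-\rho)}\to0$.
\end{itemize}
For $a<A_*(\alpha,H)$ we can choose $p$ so that both exponents exceed $a$, with the $N^{o(1)}$ losses absorbed. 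This is exactly why $A_*$ has the form given in \eqref{A_alpha_H}.

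\textbf{Step 3: outer shells in the regime $\alpha>3/H$.} We have $I_{j-1}=h_{j-1}^2/N\ge N^{2p-1}$. Hence
\[
\tau_N^{-1}\sum_j e^{-I_{j-1}/4+C\mathfrak d_N(h_j)}
\]
is stretched-exponentially small, and the same holds for the $L_N$ term in Lemma \ref{lem:outer-shell-section2}. The near-field Markov term is bounded by
\[
\sum_j \frac{\beta_NNh_je_N}{I_{j-1}}\lesssim N^{-1/2-\kappa}\sum_j\frac{N}{h_{j-1}}\lesssim N^{1/2-p-\kappa}\to0 .
\]
Note that this term does not need the $\tau_N^{-1}$ factor. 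Finally, $\sum_j\mathbb Q(D_{h_j})\to0$ by \eqref{eq:Dh-bound-section2}, since the bounds decay geometrically in $j$.

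\textbf{Step 4: the regime $2<\alpha\le3/H$ (main obstacle).} Here $\beta_N=\beta\,l(N^{3/2})^{-1}$, so $a_N=\beta_NN^{H/2}$ decays polynomially, and
\[
q_N\asymp \bigl(N^{3/2}(\log N)^{\eta}\bigr)^{-1}
\]
up to constants depending on $\beta$. Take $h_1=\sqrt N(\log N)^{\theta}$ with $1/2<\theta<\eta$, and $h_j=2^{j-1}h_1$. Then:
\begin{itemize}
\item $\mathbb Q(A_{h_1})\lesssim(\log N)^{\theta-\eta}\to0$;
\item $I_1=(\log N)^{2\theta}$ grows faster than any multiple of $\log N$, so $a_N^{-1}e^{-I_1/4}\to0$.
\end{itemize}
It remains to check that $s_{1,N}/a_N$, $\sqrt{\mathfrak d_N(h_1)}/a_N$, $\sum_j\beta_NNh_je_N/I_{j-1}$ and $\sum_j\mathbb Q(D_{h_j})$ tend to $0$. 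This is to be done via the bounds $k_Ne_N\le Cw_N$ and $e_N\approx k_Nq_N$, with $\beta_Nk_N=l(N^{3/2}(\log N)^\eta)/l(N^{3/2})$, a ratio that grows only by a power of $\log N$.

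These checks are the delicate point. The margin is now only logarithmic, slowly varying factors must be tracked through the Potter bounds, and at $\alpha=3/H$ the exponent comparison is exactly borderline. If the crude bound on $s_{1,N}$ fails, I would first fall back on the full near-field $L^1$ bound \eqref{eq:near-first-moment-section2} on $A_{h_1}$ rather than the centered-only bound \eqref{eq:near-centered-only-section2}. I would also adjust $\theta$ and the shell spacing, using ratios larger than $2$ if needed, to balance these terms against the entropy factors $e^{-I_{j-1}/4}$.
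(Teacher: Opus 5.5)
Your argument has the same architecture as the paper's proof. You use the same reduction of $|\log R_N|$ to $F_N,U_N,L_N$ via \eqref{eq:ratio-reduction-section2}, the same first-block exponents (which reproduce \eqref{A_alpha_H} exactly), and the same Lemmas~\ref{lem:first-block-section2} and~\ref{lem:outer-shell-section2}. Your $h_1$ also coincides with the paper's in both regimes.

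The genuine difference is the shell geometry.
\begin{itemize}
\item The paper uses super-geometric scales. For $\alpha>3/H$ it takes $h_j=N^{p_j}$ with $p_j=2p_{j-1}-\frac12+\frac\kappa2$, so $m$ is fixed. For $2<\alpha\le 3/H$ it takes $h_j=\sqrt N(\log N)^{\theta_j}$ with $\theta_j=2\theta_{j-1}+c$ and $0<c<\eta(\alpha-1)/\alpha$, so $m_N=O(\log\log N)$.
\item These are tuned so that each Markov term $\beta_NNh_je_N/I_{j-1}$ is individually small: $N^{-\kappa/2+o(1)}$, resp.\ $(\log N)^{c-\eta(\alpha-1)/\alpha+o(1)}$.
\item Your doubling gives $h_jN/h_{j-1}^2=2N/h_{j-1}$. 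The Markov terms then form a geometric series with total at most $4\beta_NN^2e_N/h_1$, and no tuning constant is needed.
\item The cost is $m\asymp\log N$ shells. This is harmless: all constants in Lemmas~\ref{lem:far-product-section2}--\ref{lem:outer-shell-section2} are uniform in $h$, the bounds on $\mathbb Q(D_{h_j})$ are geometric in $j$, and the entropy terms are at most $\tau_N^{-1}m\,e^{-I_1/8}$ with $I_1=N^{2p-1}$ or $(\log N)^{2\theta}$, $2\theta>1$.
\end{itemize}

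Your Step~4 is left unfinished, but the listed checks close with your scales, and none is borderline.
\begin{itemize}
\item By \eqref{eq:heavy-first-moment-section2}, $\beta_N|\mu_N|\le\beta_Ne_N\le N^{-3/2}(\log N)^{-\eta(\alpha-1)/\alpha+o(1)}$. Hence $s_{1,N}/a_N\le N^{-3/4}\beta_N^{-1}(\log N)^{O(1)}=N^{-3/4+3/(2\alpha)+o(1)}$.
\item $\mathfrak d_N(h_1)/a_N^2=w_N(\log N)^{\theta(2H-2)}\le N^{3/\alpha-3/2+o(1)}$, because $k_N\ge\beta_N^{-1}=N^{3/(2\alpha)+o(1)}$.
\item Both are polynomially small using only $\alpha>2$, so nothing degenerates at $\alpha=3/H$.
\item The Markov sum is at most $4\beta_NN^2e_N/h_1\le(\log N)^{-\theta-\eta(\alpha-1)/\alpha+o(1)}$.
\item $\sum_j\mathbb Q(D_{h_j})\lesssim N\beta_N^{\alpha-\rho}h_1^{1-r(\alpha-\rho)}\le N^{-\alpha r/2+O(\rho)+o(1)}$.
\end{itemize}
The only logarithmic margin is $\mathbb Q(A_{h_1})\lesssim(\log N)^{\theta-\eta}$, which you already handled. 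Your proposed fallback would not have helped anyway. The $L^1$ near-field bound \eqref{eq:near-first-moment-section2} gives $\beta_NNh_1e_N=(\log N)^{O(1)}$, which is not $o(a_N)$ when $a_N$ decays polynomially.

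Two small corrections are needed.
\begin{itemize}
\item At $\alpha=3/H$ one has $a_N=\beta/L_0(N^{3/2})$. This is slowly varying and need not tend to $0$; for a pure power tail, $a_N\equiv\beta$. So your claims that $\tau_N\to0$ in both cases and that $a_N$ decays polynomially are false there. Use the threshold $\frac14\min\{1,\varepsilon\tau_N\}$, as the paper does.
\item For $\alpha>3/H$ the hypothesis is only $\sup_N\beta_NN^{H/2}<\infty$. Read your ``$\beta_N\asymp N^{-H/2}$'' as ``$\beta_N\le CN^{-H/2}$''. This suffices because, with $k_N=\beta_N^{-1}$, the quantities $q_N$, $\beta_N|\mu_N|$, $\beta_N^2w_N$ and the bound for $\mathbb Q(D_h)$ are dominated by regularly varying functions of $\beta_N$ with positive index. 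By Potter's bounds, a smaller $\beta_N$ only helps.
\end{itemize}
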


\begin{proof}
We first derive the upper bound using the lemma proved earlier.  Fix a positive sequence \(\tau_N\), and suppose that
the scales \(h_1,\ldots,h_m\) are such that Lemmas
\ref{lem:first-block-section2} and \ref{lem:outer-shell-section2} can be
applied.  For fixed \(\varepsilon,\delta>0\), set
\[
        \lambda_N(\varepsilon)
        :=
        \frac14\min\{1,\varepsilon\tau_N\}.
\]
Since
\[
        |\log x|\le2|x-1|,
        \quad\text{for}\,\, |x-1|\le\frac12,
\]
we have
\[
        \left\{
        |\log R_N|>\varepsilon\tau_N
        \right\}
        \subseteq
        \left\{
        |R_N-1|>2\lambda_N(\varepsilon)
        \right\}.
\]
Indeed, if \(|R_N-1|\le1/2\), then
\(|\log R_N|\le2|R_N-1|\); if \(|R_N-1|>1/2\), then the right-hand event holds
because \(2\lambda_N(\varepsilon)\le1/2\).  By
\eqref{eq:ratio-reduction-section2},
\[
        \{|R_N-1|>2\lambda_N(\varepsilon)\}
        \subseteq
        \{F_N>\lambda_N(\varepsilon)\}
        \cup
        \{U_N>\lambda_N(\varepsilon)\}
        \cup
        \{L_N>\lambda_N(\varepsilon)\}.
\]
Indeed, on the part where \(R_N\ge1\), the first inequality in
\eqref{eq:ratio-reduction-section2} gives
\(F_N+U_N>2\lambda_N(\varepsilon)\), hence either
\(F_N>\lambda_N(\varepsilon)\) or
\(U_N>\lambda_N(\varepsilon)\).  On the part where \(R_N<1\), the second
inequality gives \(F_N+L_N>2\lambda_N(\varepsilon)\), hence either
\(F_N>\lambda_N(\varepsilon)\) or \(L_N>\lambda_N(\varepsilon)\).
Applying Lemmas \ref{lem:first-block-section2} and
\ref{lem:outer-shell-section2} with the comparison scale in those lemmas
replaced by \(\lambda_N(\varepsilon)\), and with the lemma-parameter
\(\varepsilon\) there equal to \(1\), gives
\begin{equation}\label{eq:log-reduction-section2}
\begin{aligned}
&\mathbb Q\left(
        |\log Z_N(\beta_N;\omega)
        -
        \log Z_N(\beta_N;\bar\omega)|
        >
        \varepsilon\tau_N
        \right)                                             \\
&\quad\le
C\mathbb Q(G_N(\delta)^c)
+\mathbb Q(A_{h_1})
+\sum_{j=1}^m\mathbb Q(D_{h_j})
+C\sum_{j=2}^m\frac{\beta_NNh_je_N}{I_{j-1}}                 \\
&\qquad
+\frac{C}{\delta\lambda_N(\varepsilon)}
\left[
s_{1,N}
+\sqrt{\mathfrak d_N(h_1)}
+\sum_{j=2}^m
\exp\{-I_{j-1}/4+C\mathfrak d_N(h_j)\}
+\sum_{j=2}^m e^{-I_{j-1}/2}
\right].
\end{aligned}
\end{equation}
The term \(\mathbb Q(G_N(\delta)^c)\) is kept until the end and is removed by the
positivity assumption \eqref{eq:positivity-section2}; all other terms in
\eqref{eq:log-reduction-section2} will be shown to vanish for the chosen
scales.

\medskip
\noindent\textbf{The regime \(\alpha>3/H\).}

Assume \(\sup_N\beta_NN^{H/2}<\infty\) and \(k_N=\beta_N^{-1}\).
Notice 
\(
        \kappa=\frac{\alpha H}{2}-\frac32>0.
\)
Choose
\[
        p\in\left(\frac12,\min\left\{1,\frac12+\kappa\right\}\right)
\]
and choose \(a>0\) such that
\begin{equation}\label{eq:a-choice-light-section2}
        a<
        \min\left\{
        \frac12+\kappa-p\left(H-\frac12\right),
        \frac14+\frac{\kappa}{2}+p(1-H)
        \right\}.
\end{equation}
Set \(\tau_N=N^{-a}\).  Define
\[
        p_1=p,\qquad
        p_j=2p_{j-1}-\frac12+\frac{\kappa}{2},\quad j\ge2,
\]
and
\[
        h_j=\lfloor N^{p_j}\rfloor .
\]
We stop at the first \(m\) such that \(p_m\ge1\), replacing \(h_m\) by \(N\)
if necessary.  Then \(m\) is fixed constant independent of \(N\).

We first check that these scales are admissible in Lemmas
\ref{lem:first-block-section2} and \ref{lem:outer-shell-section2}.  Since
\(h_1\to\infty\), all scales \(h_j\) tend to infinity.  Moreover
\(\beta_Nk_N=1\), and therefore, uniformly in \(1\le j\le m\),
\[
        \sup_{|y|>2h_j}2\beta_Nk_N\Psi_{h_j}(y)
        \le
        Ch_j^{-r}
        \le
        Ch_1^{-r}
        \to0.
\]
Thus the condition in Lemma \ref{lem:far-product-section2} holds at
every scale \(h_j\).  The tail bounds and \(k_N=\beta_N^{-1}\) also give
\[
        \beta_N^2Nw_N\le N^{-1/2-\kappa+o(1)},
        \qquad
        \beta_NN|\mu_N|\le N^{-1/2-\kappa+o(1)} .
\]
Since \(2H-2<0\), it follows that
\[
        \sup_{1\le j\le m}\mathfrak d_N(h_j)
        \le
        N^{-1/2-\kappa+p(2H-2)+o(1)}
        \to0,
\]
where replacing the last scale by \(N\) only decreases \(h_j^{2H-2}\).  Also
\[
        s_{1,N}
        \le
        N^{-1/2-\kappa+p(H-\frac12)+o(1)}
        \to0.
\]
Hence the hypotheses of Lemmas \ref{lem:first-block-section2} and
\ref{lem:outer-shell-section2} hold for all large \(N\).

We now verify that all terms in the right-hand side of
\eqref{eq:log-reduction-section2} vanish,  except for
\(\mathbb Q(G_N(\delta)^c)\), whose $0$ limit will be stated in the end of the proof.  Since \(\tau_N=N^{-a}\to0\), for
large \(N\),
\[
        \lambda_N(\varepsilon)=\frac{\varepsilon}{4}N^{-a}.
\]

The first term satisfies
\[
      \mathbb Q(A_{h_1})
      \le
      \sum_{i=1}^N\sum_{|y|\le2h_1}
      \mathbb Q(|\xi|>k_N)
       \le Nh_1q_N
        \le
        C N^{1+p-\alpha H/2+o(1)}
        =
        C N^{p-1/2-\kappa+o(1)}
        \to0,
\]
because \(p<1/2+\kappa\).  Next,
\[
\begin{aligned}
        \frac{s_{1,N}}{\tau_N}
        &\le
        C N^a
        \beta_NN|\mu_N|h_1^{H-1/2}                         \\
        &\le
        N^{a-\frac12-\kappa+p(H-\frac12)+o(1)}
        \to0
\end{aligned}
\]
by the first inequality in \eqref{eq:a-choice-light-section2}.  Moreover,
\[
\begin{aligned}
        \frac{\sqrt{\mathfrak d_N(h_1)}}{\tau_N}
        &\le
        C N^a
        \left[
        \beta_N^2Nw_Nh_1^{2H-2}
        \right]^{1/2}                                      \\
        &\le
        N^{a-\frac14-\frac{\kappa}{2}+p(H-1)+o(1)}
        \to0
\end{aligned}
\]
by the second inequality in \eqref{eq:a-choice-light-section2}.

For \(j\ge2\),
\[
\begin{aligned}
        \frac{\beta_NNh_je_N}{I_{j-1}}
        &\le
        C N^{1/2-\kappa+p_j-2p_{j-1}+o(1)}
        =
        C N^{-\kappa/2+o(1)}
        \to0.
\end{aligned}
\]
If the last scale is replaced by \(h_m=N\), the same bound still holds.  Indeed,
the stopping rule for $p_m$ gives
\[
        2p_{m-1}-\frac12+\frac{\kappa}{2}\ge1,
\]
and therefore
\[
        \frac{\beta_NNh_me_N}{I_{m-1}}
        \le
        C N^{1/2-\kappa+1-2p_{m-1}+o(1)}
        \le
        C N^{-\kappa/2+o(1)}.
\]
The bad far-field events are summable:
choosing \(\rho>0\) sufficiently small in \eqref{eq:Dh-bound-section2},
\[
\begin{aligned}
        \sum_{j=1}^m\mathbb Q(D_{h_j})
        &\le
        C\sum_{j=1}^m
        N\beta_N^{\alpha-\rho}h_j^{1-r(\alpha-\rho)}
        \to0 .
\end{aligned}
\]
Indeed, at \(\rho=0\) the worst exponent is
\[
        1-\frac{\alpha H}{2}+p(1-\alpha r)
        =
        -\frac12-\kappa+p(1-\alpha r)<0,
\]
since \(\alpha r>1\).  The same strict negativity remains valid for all small
\(\rho>0\).

Finally we control the remaining two sums in
\eqref{eq:log-reduction-section2}.  Here
\[
        I_1=N^{2p-1}\to\infty,
        \qquad
        I_{j-1}\ge I_1\quad (j\ge2),
\]
and the preceding estimate gives
\[
        \sup_{1\le j\le m}\mathfrak d_N(h_j)\to0.
\]
Therefore, for all large \(N\), uniformly over \(j\ge2\),
\[
        C\mathfrak d_N(h_j)\le \frac{I_1}{8}
        \le \frac{I_{j-1}}{8}.
\]
Consequently
\[
        \exp\{-I_{j-1}/4+C\mathfrak d_N(h_j)\}
        \le
        \exp\{-I_{j-1}/8\}
        \le
        \exp\{-I_1/8\}.
\]
Since \(I_1=N^{2p-1}\gg a\log N=\log(1/\tau_N)\), we obtain
\[
\begin{aligned}
\frac1{\tau_N} \sum_{j=2}^m \exp\{-I_{j-1}/4 + C\mathfrak d_N(h_j)\}
&\le C m \exp\{a\log N - I_1/8\} \to 0, \\
\frac1{\tau_N} \sum_{j=2}^m e^{-I_{j-1}/2}
&\le C m \exp\{a\log N - I_1/2\} \to 0.
\end{aligned}
\]
It remains only to optimize over the first scale \(h_1=N^p\).  For
a fixed \(p\in(1/2,\min\{1,1/2+\kappa\})\), the two restrictions in
\eqref{eq:a-choice-light-section2} say that \(a\) can be chosen below
\[
        \Phi(p):=
        \min\left\{
        \frac12+\kappa-p\left(H-\frac12\right),
        \frac14+\frac{\kappa}{2}+p(1-H)
        \right\}.
\]
The first function inside the minimum is decreasing in \(p\), while the second
one is increasing in \(p\).  They are equal when
\[
        \frac12+\kappa-p\left(H-\frac12\right)
        =
        \frac14+\frac{\kappa}{2}+p(1-H),
        \qquad\text{i.e.}\qquad
        p=\frac12+\kappa .
\]
If \(0<\kappa\le1/2\), taking \(p\uparrow1/2+\kappa\) gives
\[
        \sup_p\Phi(p)
        =
        \left(\frac12+\kappa\right)\left(\frac32-H\right).
\]
If \(\kappa\ge1/2\), on \((1/2,1)\) the minimum is maximized by taking \(p\uparrow1\), which gives
\[
        \sup_p\Phi(p)
        =
        \frac14+\frac{\kappa}{2}+1-H
        =
        \frac54-H+\frac{\kappa}{2}.
\]
Thus
\[
A_*(\alpha,H)=
\begin{cases}
\left(\dfrac12+\kappa\right)\left(\dfrac32-H\right),
&0<\kappa\le\dfrac12,\\[1.2ex]
\dfrac54-H+\dfrac{\kappa}{2},
&\kappa\ge\dfrac12.
\end{cases}
\]

\medskip
\noindent\textbf{The regime \(2<\alpha\le3/H\).}

Now
\[
        \beta_N=\beta\,l(N^{3/2})^{-1},
        \qquad
        k_N=
        \beta_N^{-1}
        \frac{l(N^{3/2}(\log N)^\eta)}{l(N^{3/2})}.
\]
We compare at the scale
\[
        \tau_N=a_N:=\beta_NN^{H/2}.
\]
Definition of the tail scale \(l\), the Karamata's theorem and the Potter bounds give
\begin{equation}\label{eq:heavy-tail-scales-section2}
        q_N\le C N^{-3/2}(\log N)^{-\eta},
\end{equation}
\begin{equation}\label{eq:heavy-first-moment-section2}
        \beta_Ne_N
        \le
        C N^{-3/2}
        (\log N)^{-\eta(\alpha-1)/\alpha+o(1)},
\end{equation}
and
\begin{equation}\label{eq:heavy-second-moment-section2}
        \beta_N^2w_N
        \le
        C N^{-3/2}
        (\log N)^{-\eta(\alpha-2)/\alpha+o(1)}.
\end{equation}

Choose
\[
        h_j=\lfloor \sqrt N(\log N)^{\theta_j}\rfloor,
\]
where
\[
        \frac12<\theta_1<\eta,
        \qquad
        \theta_j=2\theta_{j-1}+c,
        \qquad
        0<c<\eta\frac{\alpha-1}{\alpha}.
\]
Stop at the first \(m=m_N\) for which \(h_m\ge N\), and replace \(h_m\) by
\(N\) if necessary.  Then
\(m_N=O(\log\log N)\).

We again start by checking the hypotheses of Lemmas
\ref{lem:first-block-section2} and \ref{lem:outer-shell-section2}.  Since
\(h_1\to\infty\), all scales \(h_j\) tend to infinity. We have
\[
        \beta_Nk_N
        =
        \frac{l(N^{3/2}(\log N)^\eta)}{l(N^{3/2})}
        \le
        C(\log N)^{\eta/\alpha+o(1)}.
\]
Therefore, uniformly in \(1\le j\le m_N\),
\[
        \sup_{|y|>2h_j}\beta_Nk_N\Psi_{h_j}(y)
        \le
        C(\log N)^{\eta/\alpha+o(1)}
        N^{-r/2}(\log N)^{-r\theta_1}
        \to0.
\]
Thus the condition in Lemma \ref{lem:far-product-section2} holds at
all scales \(h_j\).  Moreover, since \(2H-2<0\) and \(h_j\ge h_1\),
\[
        \sup_{1\le j\le m_N}\mathfrak d_N(h_j)
        \le
        C N^{H-3/2+o(1)}
        (\log N)^{
        -\eta(\alpha-2)/\alpha
        +\theta_1(2H-2)
        +o(1)}
        \to0.
\]
Finally,
\[
        s_{1,N}
        \le
        C N^{H/2-3/4+o(1)}
        (\log N)^{
        \theta_1(H-\frac12)
        -\eta(\alpha-1)/\alpha
        +o(1)}
        \to0.
\]
Thus Lemmas \ref{lem:first-block-section2} and
\ref{lem:outer-shell-section2} can be applied for all large \(N\), uniformly
over \(1\le j\le m_N\).

We now verify the right-hand side of \eqref{eq:log-reduction-section2}.  Recall
that
\[
        \lambda_N(\varepsilon)
        =
        \frac14\min\{1,\varepsilon a_N\}.
\]

The first term is
\[
\mathbb Q(A_{h_1})
      \le
      \sum_{i=1}^N\sum_{|y|\le2h_1}
      \mathbb Q(|\xi|>k_N)
       \le 
        Nh_1q_N
        \le
        C(\log N)^{\theta_1-\eta}
        \to0.
\]
Next,
\[
\begin{aligned}
        \frac{s_{1,N}}{a_N}
        &=
        \frac{\beta_NN|\mu_N|h_1^{H-1/2}}
        {\beta_NN^{H/2}}                                    \\
        &\le
        C N^{-3/4+3/(2\alpha)+o(1)}
        (\log N)^{
        \theta_1(H-\frac12)
        -\eta(\alpha-1)/\alpha
        +o(1)}
        \to0,
\end{aligned}
\]
because of \(\alpha>2\).  Then,
\[
\begin{aligned}
        \frac{\mathfrak d_N(h_1)}{a_N^2}
        &=
        \frac{\beta_N^2Nw_Nh_1^{2H-2}}
        {\beta_N^2N^H}
        =
        w_NN^{1-H}h_1^{2H-2}                                 \\
        &\le
        C N^{3/\alpha-3/2+o(1)}
        (\log N)^{
        -\eta(\alpha-2)/\alpha
        +\theta_1(2H-2)
        +o(1)}
        \to0,
\end{aligned}
\]
again since \(\alpha>2\).  Thus
\[
        a_N^{-1}\sqrt{\mathfrak d_N(h_1)}\to0.
\]

For \(j\ge2\), first suppose that the scale has not been replaced by \(N\).
Then \(h_j\asymp \sqrt N(\log N)^{\theta_j}\) and
\[
        I_{j-1}=\frac{h_{j-1}^2}{N}
        \asymp
        (\log N)^{2\theta_{j-1}}.
\]
Using \eqref{eq:heavy-first-moment-section2},
\[
\begin{aligned}
        \frac{\beta_NNh_je_N}{I_{j-1}}
        &\le
        C
        \frac{
        N\sqrt N(\log N)^{\theta_j}
        N^{-3/2}(\log N)^{-\eta(\alpha-1)/\alpha+o(1)}
        }
        {(\log N)^{2\theta_{j-1}}}                          \\
        &=
        C(\log N)^{
        \theta_j-2\theta_{j-1}
        -\eta(\alpha-1)/\alpha
        +o(1)}                                               \\
        &=
        C(\log N)^{
        c-\eta(\alpha-1)/\alpha
        +o(1)}
        \to0 .
\end{aligned}
\]
If the last scale has been replaced by \(h_m=N\), let
\(\widetilde h_m:=\sqrt N(\log N)^{\theta_m}\) denote the unreplaced scale.
At the stopping time \(N\le\widetilde h_m\), while the denominator is still
\(I_{m-1}=h_{m-1}^2/N\).  Hence
\[
        \frac{\beta_NNh_me_N}{I_{m-1}}
        =
        \frac{\beta_NN^2e_N}{I_{m-1}}
        \le
        \frac{\beta_NN\widetilde h_me_N}{I_{m-1}},
\]
and the preceding estimate, with \(j=m\), applies to the right-hand side.
Since \(m_N=O(\log\log N)\), the summed near-field shell error satisfies
\[
        \sum_{j=2}^{m_N}
        \frac{\beta_NNh_je_N}{I_{j-1}}
        \le
        C(\log\log N)
        (\log N)^{
        c-\eta(\alpha-1)/\alpha
        +o(1)}
        \to0.
\]

The far-field bad events also vanish.  By \eqref{eq:Dh-bound-section2}, for
small enough \(\rho>0\),
\[
\begin{aligned}
        \sum_{j=1}^{m_N}\mathbb Q(D_{h_j})
        &\le
        Cm_N\,N\beta_N^{\alpha-\rho}
        h_1^{1-r(\alpha-\rho)}                              \\
        &\le
        C(\log\log N)
        N^{-\alpha r/2+\rho(3/(2\alpha)+r/2)+o(1)}
        \to0.
\end{aligned}
\]
Here we use \(\alpha r>1\), and then choose \(\rho\) so small that the polynomial exponent remains negative.

It remains to check the remaining two sums.  By the earlier bound
\[
        \sup_{1\le j\le m_N}\mathfrak d_N(h_j)\to0,
\]
and since \(I_{j-1}\ge I_1\) for \(j\ge2\), with
\[
        I_1=\frac{h_1^2}{N}\asymp(\log N)^{2\theta_1}\to\infty,
\]
we have, uniformly in \(j\ge2\), for all large \(N\),
\[
        C\mathfrak d_N(h_j)\le \frac{I_1}{8}\le\frac{I_{j-1}}8.
\]
Thus
\[
        \exp\{-I_{j-1}/4+C\mathfrak d_N(h_j)\}
        \le
        \exp\{-I_{j-1}/8\}
        \le
        \exp\{-I_1/8\}.
\]
If
\(2<\alpha<3/H\), then
\[
        \log(1/a_N)
        =
        \left(
        \frac{3}{2\alpha}-\frac H2+o(1)
        \right)\log N.
\]
If \(\alpha=3/H\), then \(a_N\) is slowly varying:
\[
        \log(1/a_N)=o(\log N).
\]
Since \(\theta_1>1/2\) and \(m_N=O(\log\log N)\), we have,
\[
        I_1\gg\log(1/a_N)+\log m_N .
\]
Hence,
\[
\begin{aligned}
\frac1{a_N} \sum_{j=2}^{m_N} \exp\{-I_{j-1}/4 + C\mathfrak d_N(h_j)\}
&\le \exp\{\log(1/a_N) + \log m_N - I_1/8\} \to 0, \\
\frac1{a_N} \sum_{j=2}^{m_N} e^{-I_{j-1}/2}
&\le \exp\{\log(1/a_N) + \log m_N - I_1/2\} \to 0.
\end{aligned}
\]
Thus, with
\(\tau_N=a_N\), all terms in
\eqref{eq:log-reduction-section2} vanish in the whole range
\(2<\alpha\le3/H\).

\medskip
\noindent\textbf{Final estimate}

In either regime, \eqref{eq:log-reduction-section2} and the estimates above
give, for every fixed \(\varepsilon,\delta>0\),
\[
        \limsup_{N\to\infty}
        \mathbb Q\left(
        |\log Z_N(\beta_N;\omega)
        -
        \log Z_N(\beta_N;\bar\omega)|
        >
        \varepsilon\tau_N
        \right)
        \le
        C\limsup_{N\to\infty}\mathbb Q(G_N(\delta)^c).
\]
By the positivity assumption \eqref{eq:positivity-section2}, the right-hand
side tends to zero after sending \(\delta\downarrow0\).  This completes
the proof of Proposition \ref{truncated-error}.
\end{proof}

\section{The scaling limit of the modified partition function}
  In this section, we establish the limit of the modified partition function corresponding to the truncated and centered environment $\bar{\omega}$ defined in \eqref{truncated_omega}. We generally adopt the  methodology of \cite{GC23}, but with some differences in the specific implementation.
  \subsection{Modified partition function and estimations for transition densities }
  The modified partition function with respect to the truncated environment $\bar{\omega}$ is given by
$$
    \mathfrak{Z}_N(\beta_N; \bar{\omega}) := \mathbb{E}_{\mathbb{P}_N} \left[ \prod_{i=1}^N (1 + \beta_N \bar{\omega}(i, S_i)) \right].
$$
Expanding this product along the simple random walk paths and taking the expectation $\mathbb{E}_{\mathbb{P}_N}$ yields a discrete chaos expansion:
$$
    \mathfrak{Z}_N(\beta_N; \bar{\omega}) = 1 + \sum_{k=1}^N \beta_N^k \sum_{1 \le i_1 < \dots < i_k \le N} \mathbb{E}_{\mathbb{P}_N} \left[ \prod_{j=1}^k \bar{\omega}(i_j, S_{i_j}) \right].
$$

The discrete $k$-point time simplex and scaled spatial grid are defined by
$$
    \Delta_k^{(N)} := \left\{ \mathbf{t} = (t_1, \dots, t_k) \in \left(\frac{1}{N}\mathbb{N}\right)^k : 0 < t_1 < \dots < t_k \le 1 \right\}, \quad \mathcal{X}_N := N^{-1/2}\mathbb{Z}.
$$
For $(\mathbf{t}, \mathbf{x})\in (\mathbf{t}, \mathbf{x}) \in \Delta_k^{(N)} \times \mathcal{X}_N^k $,  let 
$$p_N^k(\mathbf{t}, \mathbf{x}) := \mathbb{P}_N(S_{N t_1} = N^{1/2}x_1, \dots, S_{N t_k} = N^{1/2}x_k)
$$
be the $k$-step transition probability, and denote the environment tensor by $\bar{\omega}_N^k(\mathbf{t}, \mathbf{x}) := \prod_{j=1}^k \bar{\omega}(N t_j, N^{1/2}x_j)$. The expansion can be rewritten as:
\begin{equation}\label{eq:discrete_chaos}
    \mathfrak{Z}_N(\beta_N; \bar{\omega}) = 1 + \sum_{k=1}^N \beta_N^k \sum_{(\mathbf{t}, \mathbf{x}) \in \Delta_k^{(N)} \times \mathcal{X}_N^k} p_N^k(\mathbf{t}, \mathbf{x}) \bar{\omega}_N^k(\mathbf{t}, \mathbf{x}). 
\end{equation}

To convert the sum over $\Delta_k^{(N)} \times \mathcal{X}_N^k$ into an integral over the continuous simplex $\Delta_k(1) = \{0 < t_1 < \dots < t_k \le 1\}$, we assign each grid point a space-time cell $\mathcal{C}_N^k(\mathbf{t}, \mathbf{x})$ of volume $(2N^{-3/2})^k$, extending $p_N^k$ and $\bar{\omega}_N^k$ as step functions. Introducing the spatially normalized density $\hat{p}_N^k(\mathbf{t}, \mathbf{x}) := (N^{1/2}/2)^k p_N^k(\mathbf{t}, \mathbf{x})$ and defining its symmetrization $\psi_N^k(\mathbf{t}, \mathbf{x}) := \frac{1}{k!} \sum_{\pi \in \mathcal{S}_k} \hat{p}_N^k(\pi \mathbf{t}, \pi \mathbf{x}) \mathbbm{1}_{\Delta_k(1)}(\pi \mathbf{t})$, we obtain the integral representation:
\begin{equation}\label{integral-representation}
    \mathfrak{Z}_N(\beta_N; \bar{\omega}) = 1 + \sum_{k=1}^N (\beta_N N)^k \int_{[0,1]^k} \int_{\mathbb{R}^k} \psi_N^k(\mathbf{t}, \mathbf{x}) \bar{\omega}_N^k(\mathbf{t}, \mathbf{x}) \, \mathrm{d}\mathbf{t} \, \mathrm{d}\mathbf{x}.
\end{equation}

Before analyzing the $L^2$-convergence, we need to compute the covariance gap between the truncated environment $\bar{\omega}$ and the original environment $\omega$. The original covariance is $\gamma(z) = \mathbb{E}_{\mathbb{Q}}[\omega(i,x)\omega(i,x+z)]$ as defined in \eqref{gamma}. For the truncated noise $\bar{\xi}(i,y)$, its variance $\sigma_{\bar{\xi}}^2$ is less than $\sigma_\xi^2 = 1$ with
$$
    1 - \sigma_{\bar{\xi}}^2 = \mathbb{E}_{\mathbb{Q}}[\xi^2 \mathbbm{1}_{\{|\xi| > k_N\}}] + (\mathbb{E}_{\mathbb{Q}}[\xi \mathbbm{1}_{\{|\xi| > k_N\}}])^2 \le C k_N^{2-\alpha} L(k_N).
$$
Consequently, the absolute difference between the truncated spatial covariance $\bar{\gamma}(z)$ and the original $\gamma(z)$ is uniformly bounded:
\begin{equation}\label{eq:covariance_gap}
    \sup_{z \in \mathbb{Z}} |\gamma(z) - \bar{\gamma}(z)| \le (1 - \sigma_{\bar{\xi}}^2) \sum_{u} |\psi_u| |\psi_{u+z}| \le C k_N^{2-\alpha} L(k_N).
\end{equation}
Since $k_N \to \infty$ for any $\alpha > 2$, this difference vanishes asymptotically. Recalling from \eqref{gamma1}-\eqref{gamma2} that $\gamma(N^{1/2}z) \sim N^{1/2-r}K(z)$, the rescaled truncated covariance function satisfies:
\begin{equation}\label{eq:rescaled_cov}
    N^{r-1/2} \bar{\gamma}(N^{1/2}z) \longrightarrow K(z) \quad \text{as } N \to \infty.
\end{equation}

With the asymptotic behavior \eqref{eq:rescaled_cov} of the rescaled covariance, we can uniformly bound the covariance integrals. Since $\bar{\gamma}([z])$ is bounded for small $|z|$ and decays proportionally to $K(z)$ as $|z| \to \infty$, there exists a universal constant $C > 0$ such that $N^{r-1/2}\bar{\gamma}(N^{1/2}z) \le C K(z)$ for all $z \in \mathbb{R}$ and $N \ge 1$. 

The $k$-point rescaled truncated covariance function is defined as $\bar{\gamma}_N^{(k)}(\mathbf{x}-\mathbf{y}) := \prod_{i=1}^k N^{r-1/2}\bar{\gamma}(N^{1/2}(x_i-y_i))$. For any test function $\varphi \in \mathcal{L}_K^k([0,1]^k \times \mathbb{R}^k)$, we can bound the multi-dimensional integral as follows:

\begin{align*}
&\left| \int_{[0,1]^k} \int_{\mathbb{R}^{2k}} \varphi(\mathbf{t}, \mathbf{x}) \bar{\gamma}_N^{(k)}(\mathbf{x} - \mathbf{y}) \varphi(\mathbf{t}, \mathbf{y}) \, \mathrm{d}\mathbf{t} \, \mathrm{d}\mathbf{x} \, \mathrm{d}\mathbf{y} \right| \\
&\le \int_{[0,1]^k} \int_{\mathbb{R}^{2k}} |\varphi(\mathbf{t}, \mathbf{x})| \bar{\gamma}_N^{(k)}(\mathbf{x} - \mathbf{y}) |\varphi(\mathbf{t}, \mathbf{y})| \, \mathrm{d}\mathbf{t} \, \mathrm{d}\mathbf{x} \, \mathrm{d}\mathbf{y} \\
&\le C^k \int_{[0,1]^k} \int_{\mathbb{R}^{2k}} |\varphi(\mathbf{t}, \mathbf{x})| \prod_{i=1}^k K(x_i - y_i) |\varphi(\mathbf{t}, \mathbf{y})| \, \mathrm{d}\mathbf{t} \, \mathrm{d}\mathbf{x} \, \mathrm{d}\mathbf{y} \\
&= C^k |||\varphi|||_{\mathcal{L}_K^k}^2,
\end{align*}
where we define the fractional kernel norm by
\begin{equation*}
|||\varphi|||_{\mathcal{L}_K^k}^2 := \int_{[0,1]^k} \int_{\mathbb{R}^{2k}} \varphi(\mathbf{t}, \mathbf{x}) \prod_{i=1}^k K(x_i - y_i) \varphi(\mathbf{t}, \mathbf{y}) \, \mathrm{d}\mathbf{t} \, \mathrm{d}\mathbf{x} \, \mathrm{d}\mathbf{y}.
\end{equation*}
Therefore, we have the general bound:
\begin{equation} \label{eq:general_bound_35}
\left| \int_{[0,1]^k} \int_{\mathbb{R}^{2k}} \varphi(\mathbf{t}, \mathbf{x}) \bar{\gamma}_N^{(k)}(\mathbf{x} - \mathbf{y}) \varphi(\mathbf{t}, \mathbf{y}) \, \mathrm{d}\mathbf{t} \, \mathrm{d}\mathbf{x} \, \mathrm{d}\mathbf{y} \right| \le C^k |||\varphi|||_{\mathcal{L}_K^k}^2.
\end{equation}
If the function $\varphi$ satisfies
\begin{equation*}
\sup_{\mathbf{t} \in [0,1]^k} \int_{\mathbb{R}^k} |\varphi(\mathbf{t}, \mathbf{x})| \, \mathrm{d}\mathbf{x} \le A < \infty,
\end{equation*}
then by applying the Hardy-Littlewood inequality for fractional integration along with H\"older's inequality, there exists a positive constant $A_H$ such that:
\begin{align}
|||\varphi|||_{\mathcal{L}_K^k}^2 &\le A_H \int_{[0,1]^k} \left( \int_{\mathbb{R}^k} |\varphi(\mathbf{t}, \mathbf{x})|^{\frac{2}{3-2r}} \, \mathrm{d}\mathbf{x} \right)^{3-2r} \mathrm{d}\mathbf{t} \nonumber \\
&\le A_H A^{2-2r} \int_{[0,1]^k} \int_{\mathbb{R}^k} |\varphi(\mathbf{t}, \mathbf{x})|^{2r} \, \mathrm{d}\mathbf{x} \, \mathrm{d}\mathbf{t}. \label{eq:norm_bound_36}
\end{align}

The following three lemmas give some estimates of $\psi_N^k$. We omit the proof of Lemma \ref{lem:3.1} (\cite{2014AKQ1,GC23,RANG20203408}), and provide alternative proofs for Lemma \ref{lem:LK-tightness} and Lemma \ref{lem:3.2}

\begin{lemma}\label{lem:3.1}
Assume that the conditions \textbf{(A)} and \textbf{(B)} hold with $r \in (1/2, 1)$. Let $\beta_N$ be a sequence of inverse temperatures satisfying $\beta_N N^{H/2} \to \beta \in [0, \infty)$ as $N \to \infty$. Then there exists a positive constant $C$ such that for any $N \ge 1$ and $k \ge 1$,
\begin{equation} \label{eq:rho_k_bound}
\| \rho_k \|_{\mathcal{L}_K^k}^2 \le \frac{C^k \Gamma^k(H)}{\Gamma(kH + 1)}
\end{equation}
and
\begin{equation} \label{eq:psi_k_bound}
k! \| \psi_N^k \|_{\mathcal{L}_K^k}^2 \le \frac{C^k \Gamma^k(H)}{\Gamma(kH + 1)},
\end{equation}
where
\begin{equation*}
\rho_k(\mathbf{t}, \mathbf{x}) := \prod_{i=1}^k \rho(t_i - t_{i-1}, x_i - x_{i-1}), \quad (\mathbf{t}, \mathbf{x}) \in \Delta_k(1) \times \mathbb{R}^k.
\end{equation*}
In particular, since $H > 0$, the bounds in \eqref{eq:rho_k_bound} and \eqref{eq:psi_k_bound} guarantee the summability of the following series:
\begin{equation} \label{eq:rho_summable}
\sum_{k=1}^\infty (\sqrt{2}\beta)^{2k} \| \rho_k \|_{\mathcal{L}_K^k}^2 < \infty,
\end{equation}
and
\begin{equation} \label{eq:psi_summable}
\lim_{l \to \infty} \limsup_{N \to \infty} \sum_{k \ge l} (\sqrt{2}\beta_N N^{H/2})^{2k} k! \| \psi_N^k \|_{\mathcal{L}_K^k}^2 = 0.
\end{equation}
\end{lemma}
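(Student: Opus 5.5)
We prove the bounds of Lemma \ref{lem:3.1} by estimating the $\mathcal L_K^k$ norm through \eqref{eq:norm_bound_36}, reducing it to an iterated Dirichlet-type time integral.

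\textbf{Bound for $\rho_k$.} For fixed $\mathbf t\in\Delta_k(1)$ we have $\int_{\mathbb R^k}\rho_k(\mathbf t,\mathbf x)\,d\mathbf x=1$, so the hypothesis of \eqref{eq:norm_bound_36} holds with $A=1$. Hence
\[
\|\rho_k\|^2_{\mathcal L_K^k}\le A_H\int_{\Delta_k(1)}\Bigl(\int_{\mathbb R^k}\rho_k^{2/(3-2r)}\,d\mathbf x\Bigr)^{3-2r}d\mathbf t .
\]
A more efficient route is to integrate the space variables successively from $x_k$ backwards. The basic identity is the semigroup/Fourier computation
\[
\int\!\!\int \rho(s,u)\rho(s,v)K(u-v)\,du\,dv=\int \rho(2s,w)K(w)\,dw=c_H\, s^{H-1}.
\]
This holds because $K(w)=H(2H-1)|w|^{2H-2}$ and the Gaussian moment of order $2H-2$ scales like $s^{H-1}$.

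Concretely, write the quadratic form in Fourier variables: $\widehat K(\xi)=c|\xi|^{1-2H}$, and $\widehat{\rho_k}$ factorizes as $\prod_i e^{-(t_i-t_{i-1})|\xi_i+\dots+\xi_k|^2/2}$ after the change of variables $\eta_i=\xi_i+\dots+\xi_k$. This gives
\[
\|\rho_k\|^2=\int_{\Delta_k}\int \prod_i e^{-(t_i-t_{i-1})\eta_i^2}\prod_i|\eta_i-\eta_{i+1}|^{1-2H}\,d\eta\,d\mathbf t .
\]
Using $|\eta_i-\eta_{i+1}|^{1-2H}\le$ a sum of $|\eta_i|^{1-2H}$ and $|\eta_{i+1}|^{1-2H}$ costs at most $C^k$ (there are $2^k$ terms, each exponent at most $2(1-2H)$, still integrable since $H<1$... one would need to check $2(2H-1)<1$; if it fails, use instead Hölder splitting as in \cite{GC23}). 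Each $\eta_i$-integral then gives $(t_i-t_{i-1})^{H-1}$ up to a constant. The Dirichlet formula
\[
\int_{\Delta_k(1)}\prod_i (t_i-t_{i-1})^{H-1}d\mathbf t=\frac{\Gamma(H)^k}{\Gamma(kH+1)}
\]
then yields \eqref{eq:rho_k_bound}.

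\textbf{Bound for $\psi_N^k$.} By symmetrization, $k!\|\psi_N^k\|^2\le\|\hat p_N^k\mathbbm 1_{\Delta_k}\|^2$ computed on the ordered simplex, since the cross terms are supported on disjoint orderings of $\mathbf t$. The random walk analogue uses the local limit theorem and its uniform Gaussian upper bound:
\[
\sup_x \hat p_N(t,x)\le C t^{-1/2},\qquad \sum_{x,y}p(n,x)p(n,y)\gamma(x-y)\le C(1\vee n)^{H-1}\ \text{(rescaled)}.
\]
The second estimate follows from the characteristic function $\cos^{2n}(\theta)$ and $\hat\gamma(\theta)\asymp|\theta|^{1-2H}$ near $0$. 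Replacing the Gaussian factors by these bounds, the same iterated argument gives $C^k\prod(t_i-t_{i-1})^{H-1}$ uniformly in $N$ (the lattice cutoff only improves small increments), hence \eqref{eq:psi_k_bound}.

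\textbf{Summability.} Stirling's formula gives $C^k\Gamma(H)^k/\Gamma(kH+1)$ decaying superexponentially, since $\Gamma(kH+1)\ge (kH/e)^{kH}$. Summing against $(\sqrt2\beta)^{2k}$, which is bounded by $(\sqrt2\sup_N\beta_NN^{H/2})^{2k}$, gives \eqref{eq:rho_summable}, and the uniform tails give \eqref{eq:psi_summable}.

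\textbf{Main obstacle.} The hardest step is the decoupling of the product $\prod|\eta_i-\eta_{i+1}|^{1-2H}$ across neighbouring frequencies with only a $C^k$ loss, together with the lattice version for $\psi_N^k$. I would first try the crude splitting above. If an exponent becomes nonintegrable, I would instead integrate the frequencies one at a time from $\eta_k$ downward, using the bound
\[
\sup_a\int e^{-s\eta^2}|\eta-a|^{1-2H}\,d\eta\le Cs^{H-1},
\]
which is valid since $1-2H\in(-1,0)$.
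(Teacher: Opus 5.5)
\textbf{There is a genuine gap in the reduction step.} Your overall architecture is right: reduce to $C^k\prod_i s_i^{H-1}$ with $s_i=t_i-t_{i-1}$, then apply the Dirichlet integral and Stirling. The reduction itself does not go through as written, for three reasons.

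\emph{The ``crude splitting'' is false.} The inequality $|\eta_i-\eta_{i+1}|^{1-2H}\le C(|\eta_i|^{1-2H}+|\eta_{i+1}|^{1-2H})$ fails outright. Since $1-2H<0$, the left side is infinite on the diagonal $\eta_i=\eta_{i+1}\neq0$, while the right side is finite there. So checking whether $2(2H-1)<1$ is beside the point.

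\emph{Your fallback runs in the wrong order.} The bound $\sup_a\int e^{-s\eta^2}|\eta-a|^{1-2H}\,d\eta\le Cs^{H-1}$ is true by rearrangement and is the right tool. But with $\eta_{k+1}=0$, the variable $\eta_k$ carries two singular factors $|\eta_{k-1}-\eta_k|^{1-2H}|\eta_k|^{1-2H}$. The sup bound therefore does not apply to it, and for $H\ge 3/4$ the $\eta_k$-integral is even infinite at $\eta_{k-1}=0$. You must integrate $\eta_1$ first, since it appears only in $|\eta_1-\eta_2|^{1-2H}$. Then integrate $\eta_2$, and so on, leaving $\eta_k$ (paired with $|\eta_k|^{1-2H}$) for last. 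Each step then gives $Cs_i^{H-1}$ uniformly in the remaining variables.

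\emph{The $\psi_N^k$ bound is only asserted.} The Fourier transform of the step-function extension of $\hat p_N^k$ is a periodic function of the $\eta_j$, because $\cos^{2n}$ returns to $1$ at multiples of $\pi$ and gives no Gaussian decay. It is multiplied by cell factors depending on $\xi_j=\eta_j-\eta_{j+1}$, so the Gaussian iteration does not transfer verbatim. Also, the estimate $\sum_{x,y}p(n,x)p(n,y)\gamma(x-y)\le Cn^{H-1}$ you quote concerns the lattice covariance $\gamma$ at zero shift. The norm in the lemma is built on $K$, and any iteration needs bounds uniform in the shift.

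\textbf{The simplest repair is the display you wrote first and then abandoned.} The paper omits its own proof, but this is the route it indicates through \eqref{eq:norm_bound_36} and the proof of Lemma \ref{lem:LK-tightness}.

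Apply the one-dimensional Hardy--Littlewood--Sobolev inequality in each coordinate; it tensorizes by Minkowski's inequality because $1/H<1/(1-H)$. This gives $\|\varphi\|_{\mathcal L_K^k}^2\le A_H^k\int\|\varphi(\mathbf t,\cdot)\|_{L^{1/H}(\mathbb R^k)}^2\,d\mathbf t$. Here $2/(3-2r)=1/H$, and the constant must be allowed to grow like $A_H^k$.

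For $\rho_k$: after the unimodular change of variables $u_i=x_i-x_{i-1}$, the $L^{1/H}$ norm factorizes. Since $\|\rho(s,\cdot)\|_{1/H}^2=c_Hs^{H-1}$, you get $\|\rho_k\|_{\mathcal L_K^k}^2\le C^k\int_{\Delta_k(1)}\prod_is_i^{H-1}\,d\mathbf t$.

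For the walk, take $s=n/N$. Then $\|\hat p_N(s,\cdot)\|_{1/H}^{1/H}\le\|\hat p_N(s,\cdot)\|_\infty^{1/H-1}\|\hat p_N(s,\cdot)\|_1\le(Cs^{-1/2})^{1/H-1}$, so $\|\hat p_N(s,\cdot)\|_{1/H}^2\le Cs^{H-1}$ uniformly in $N$. On the support the discrete increments satisfy $n_i\ge1$, and the continuous increments of the step-function extension satisfy $s_i\le 2n_i/N$. Hence this is at most $Cs_i^{H-1}$.

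Combine this with your observation $k!\|\psi_N^k\|_{\mathcal L_K^k}^2=\|\hat p_N^k\mathbbm{1}_{\Delta_k(1)}\|_{\mathcal L_K^k}^2$, which is in fact an equality. This yields \eqref{eq:rho_k_bound} and \eqref{eq:psi_k_bound} with no frequency decoupling at all. Your summability argument then applies as written.
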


\begin{lemma}\label{lem:LK-tightness}
Assume that conditions \textbf{(A)} and \textbf{(B)} hold, and let
\(H=\frac32-r\in(\frac12,1)\). For \(k\ge1\), set \(t_0=0\) and
\(s_i=t_i-t_{i-1}\). Then
\[
\lim_{\delta\downarrow0}\limsup_{N\to\infty}
\left\|\psi_N^k
{\mathbbm 1}_{\{\min_{1\le i\le k}s_i\le\delta\}}
\right\|_{\mathcal L_K^k}=0,
\]
and
\[
\lim_{M\to\infty}\limsup_{N\to\infty}
\left\|\psi_N^k
{\mathbbm 1}_{\{\max_{1\le i\le k}|x_i|>M\}}
\right\|_{\mathcal L_K^k}=0.
\]
The same estimates hold with \(\psi_N^k\) replaced by \(\tilde\rho_k\).
\end{lemma}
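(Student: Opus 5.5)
We prove both statements by reducing to one-dimensional heat-kernel integrals: first an explicit pointwise bound on $\psi_N^k$, then the factorization of $\|\cdot\|_{\mathcal L_K^k}^2$ into a product of single-step kernels. Since $\psi_N^k$ is a symmetrization of $\hat p_N^k$ restricted to the simplex, and $\|\cdot\|_{\mathcal L_K^k}$ is invariant under simultaneous permutation of $(\mathbf t,\mathbf x)$, it suffices to treat the ordered version $\hat p_N^k\mathbbm 1_{\Delta_k(1)}$ (up to the harmless factor $1/k!$). Both indicator sets are permutation invariant: $\min_i s_i\le\delta$ refers to the ordered gaps, and $\max_i|x_i|>M$ is symmetric.

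\textbf{Step 1: pointwise domination.} By the Markov property, $\hat p_N^k(\mathbf t,\mathbf x)=\prod_{i=1}^k \hat p_N(s_i,x_i-x_{i-1})$ on the simplex. Here $\hat p_N(s,x)=\frac{\sqrt N}{2}\mathbb P(S_{Ns}=\sqrt N x)$, extended as a step function on the cells. The standard local limit theorem and Gaussian (Hoeffding) tail bounds give
\[
\hat p_N(s,x)\le C\,\rho(c s, x)
\]
for some constants $C,c$, uniformly in $N$ and $s\ge 1/N$, modulo the cell shift of size $N^{-1/2}$. Since $K\ge0$, the norm is monotone in $|\varphi|$. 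Hence both quantities are bounded by $C^k$ times the same norms with $\hat p_N$ replaced by the Gaussian product $\tilde\rho_k$ (with variance parameter $c$). This also proves the last sentence of the lemma, since that case is the same computation.

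\textbf{Step 2: small gaps.} By the union bound $\mathbbm 1_{\{\min s_i\le\delta\}}\le\sum_i\mathbbm 1_{\{s_i\le\delta\}}$, it suffices to fix one index $i$. In the double integral $\int\tilde\rho_k(\mathbf t,\mathbf x)\prod_j K(x_j-y_j)\tilde\rho_k(\mathbf t,\mathbf y)$, the $x$- and $y$-chains share the same times. Integrating successively from the last coordinate, as in the proof of Lemma~\ref{lem:3.1}, each step contributes a factor
\[
\int\!\!\int \rho(s,u)\rho(s,v)K(u-v+z)\,du\,dv\le C s^{H-1}.
\]
This holds uniformly in the shift $z$, because $K$ has its maximum spread at $z=0$ by a rearrangement argument ($\rho(s,\cdot)*\rho(s,\cdot)$ is symmetric decreasing). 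The bound reduces the norm to a Dirichlet-type integral $\int_{\Delta_k(1)}\prod_j s_j^{H-1}\,d\mathbf t$. Restricting $s_i\le\delta$ then gives at most $C_k\delta^{H}\to0$, since $H>0$ makes $s^{H-1}$ integrable at $0$.

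\textbf{Step 3: large space.} If $\max|x_i|>M$, then some increment satisfies $|x_j-x_{j-1}|>M/k$. We use $\rho(s,u)\mathbbm 1_{|u|>M/k}\le \rho(s,u)\,\mathbbm 1_{|u|>M/k}$ and apply Cauchy--Schwarz in the $K$-inner product, which is positive definite, with the rest of the chain bounded as in Step 2. The resulting one-step factor is
\[
\int\!\!\int \rho(s,u)\mathbbm 1_{|u|>M/k}\,K(u-v+z)\,\rho(s,v)\,du\,dv .
\]
Split it as $s^{H-1}$ times a Gaussian tail $e^{-cM^2/(k^2 s)}$ to leading order. To make this rigorous, bound $K(u-v+z)\le K$-potential and use Hölder via \eqref{eq:norm_bound_36}: the $L^{2r}$ norm of $\rho(s,\cdot)\mathbbm 1_{|u|>M/k}$ is $s^{(1-2r)/(2\cdot)}$ times a tail that tends to $0$. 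Combined with dominated convergence in $\mathbf t$ (the majorant $\prod s_j^{H-1}$ is integrable), this gives the limit $0$ as $M\to\infty$.

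\textbf{Main obstacle.} The hard part is Step 1 near $s_i\approx 1/N$ and the lattice parity and cell discretization: the pointwise Gaussian domination must hold uniformly there. If it fails for tiny gaps, I would instead bound $\hat p_N(s,\cdot)$ directly by $C\min(\sqrt N,s^{-1/2})$ with Gaussian tails. This still yields the integrable factor $s^{H-1}$ via \eqref{eq:norm_bound_36}, which is all Steps 2--3 use.
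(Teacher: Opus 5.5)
Your proof is correct, but it takes a different route from the paper's.

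\textbf{The paper's route.} The paper inserts the indicators into the Hardy--Littlewood/H\"older estimate \eqref{eq:norm_bound_36} behind Lemma~\ref{lem:3.1}. For the time part this gives $C_{H,k}\int_{\Delta_k(1)}\mathbbm 1_{\{\min_i s_i\le 2\delta\}}\prod_i s_i^{H-1}\,d\mathbf t+o(1)$. For the spatial part it extracts a power of the exit probability $P_N(\max_{j\le N}|S_j|>M\sqrt N)$, which the martingale maximal inequality makes small uniformly in $N$. That route needs essentially only the unit mass of $\hat p_N(s,\cdot)$ and an on-diagonal bound $\hat p_N(s,\cdot)\le Cs^{-1/2}$.

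\textbf{Your route.} You dominate $\hat p_N$ pointwise by a Gaussian kernel, which uses the full off-diagonal Gaussian upper bound for the walk. You replace Hardy--Littlewood by the rearrangement fact that $z\mapsto(\rho(2s,\cdot)*K)(z)$ is symmetric decreasing with maximum $Cs^{H-1}$ at $z=0$. This lets you integrate out the two chains one time step at a time. For the spatial tail you isolate a single large increment $|x_j-x_{j-1}|>M/k$ rather than the running maximum. In exchange you get explicit $N$-uniform rates, $C_k\delta^{H}$ and $C_ke^{-cM^2/k^2}$, with no $o(1)$ remainder. You also treat $\psi_N^k$ and $\tilde\rho_k$ in one computation and reprove the bound of Lemma~\ref{lem:3.1} along the way.

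\textbf{Step 1 can be tightened.} The tiny-gap worry disappears if you compare with the discrete gap $s_i^{g}\ge 1/N$ of the cell rather than with $s_i$. Since the spatial cell shift is at most $2N^{-1/2}\le 2\sqrt{s_i^{g}}$, you get $\hat p_N\le C\rho(c\,s_i^{g},\cdot)$ in the continuous increment. Moreover $s_i<s_i^{g}+1/N\le 2s_i^{g}$ and $s_i^{g}\le 2$. Hence $(s_i^{g})^{H-1}\le 2^{1-H}s_i^{H-1}$, and all Gaussian tails are at most $2e^{-ca^2}$.

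\textbf{Step 3 should be rewritten.} The sentence about a ``$K$-potential'' and ``$s^{(1-2r)/(2\cdot)}$'' is garbled, and Cauchy--Schwarz is not what is needed. Write $\langle f,g\rangle_K$ for the bilinear form of $\|\cdot\|_{\mathcal L_K^k}$ and let $\varphi$ be the Gaussian majorant. Nonnegativity alone gives $\|\varphi\mathbbm 1_{\{\max_i|x_i|>M\}}\|_{\mathcal L_K^k}^2\le\sum_j\langle\varphi\mathbbm 1_{\{|x_j-x_{j-1}|>M/k\}},\varphi\rangle_K$. Your Step~2 bound then handles the step carrying the indicator, uniformly in the shift $z$:
\[
\int\rho(s,u)\mathbbm 1_{\{|u|>a\}}(\rho(s,\cdot)*K)(z+u)\,du\le Cs^{H-1}\,\mathbb P(|\mathcal N(0,s)|>a)\le 2Cs^{H-1}e^{-a^2/2},
\]
using $s\le 1$. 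So neither the H\"older step nor dominated convergence is needed. The route via \eqref{eq:norm_bound_36} also works, since $\int\rho(s,u)^{2r}\mathbbm 1_{\{|u|>a\}}\,du$ equals $Cs^{(1-2r)/2}=Cs^{H-1}$ times a Gaussian tail.
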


\begin{proof}
It is enough to work on the ordered
simplex \(\Delta_k(1)\); the symmetrization only changes the estimates by a
constant depending on \(k\).

We first consider the time part. The Hardy--Littlewood estimate
and the one-dimensional random-walk heat-kernel bound used in Lemma
\ref{lem:3.1} give, with the indicator inserted into the same computation,
\[
\left\|\psi_N^k
{\mathbbm 1}_{\{\min_i s_i\le\delta\}}
\right\|_{\mathcal L_K^k}^2
\le
C_{H,k}
\int_{\Delta_k(1)}
{\mathbbm 1}_{\{\min_i s_i\le 2\delta\}}
\prod_{i=1}^k s_i^{H-1}\,d\mathbf t+o(1).
\]
Since \(H>0\),
\[
\int_{\Delta_k(1)}\prod_{i=1}^k s_i^{H-1}\,d\mathbf t
=
\frac{\Gamma(H)^k}{\Gamma(kH+1)}<\infty,
\]
and hence the right-hand side converges to zero as \(\delta\downarrow0\).

For the spatial tail, the same localized version of the estimate in Lemma
\ref{lem:3.1} gives
\[
\left\|\psi_N^k
{\bf 1}_{\{\max_i |x_i|>M\}}
\right\|_{\mathcal L_K^k}^2
\le
C_{H,k}
P_N\left(\max_{0\le j\le N}|S_j|>M\sqrt N\right)^{2H}
\int_{\Delta_k(1)}
\prod_{i=1}^k s_i^{H-1}\,d\mathbf t+o(1).
\]
By the martingale maximal inequality, see Lawler and Limic
\cite[Theorem A.2.5]{lawler2010random},
\[
P_N\left(\max_{0\le j\le N}|S_j|>M\sqrt N\right)\longrightarrow0
\qquad\text{as }M\to\infty,
\]
uniformly in \(N\). Therefore the spatial-tail estimate also vanishes.

The proof for \(\tilde\rho_k\) is identical, replacing the rescaled
random-walk transition density by the Gaussian heat kernel. The estimates
\[
\|g_s\|_\infty\le Cs^{-1/2},\qquad \int_{\mathbb R}g_s(x)\,dx=1,
\]
give the same time-part bound, and the spatial tail follows from the
reflection principle for Brownian motion. This proves the lemma.
\end{proof}

\begin{lemma}\label{lem:3.2}
Assume that conditions \textbf{(A)} and \textbf{(B)} hold. Then for every
\(k\ge1\),
\[
\lim_{N\to\infty}\|\psi_N^k-\tilde\rho_k\|_{\mathcal L_K^k}^2=0,
\]
where
\[
\tilde\rho_k(\mathbf t,\mathbf x):=\mathrm{Sym}\{\rho_k(\mathbf t,\mathbf x)\}.
\]
\end{lemma}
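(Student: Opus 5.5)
The plan is a ``truncate, converge locally, control the tails'' argument in the $\mathcal L_K^k$ seminorm. The tails are handled by Lemma \ref{lem:LK-tightness}, and a local limit theorem does the rest. Fix $k\ge1$, $\delta\in(0,1)$ and $M>0$, and split the space-time domain $[0,1]^k\times\mathbb R^k$ into the good region
\[
\mathcal G_{\delta,M}:=\Bigl\{\min_{1\le i\le k}s_i>\delta,\ \max_{1\le i\le k}|x_i|\le M\Bigr\}
\]
and its complement. Since $\|\cdot\|_{\mathcal L_K^k}$ is a Hilbert seminorm (the kernel $\prod_i K(x_i-y_i)$ is positive definite, being the covariance of fractional noise), the triangle inequality gives
\[
\|\psi_N^k-\tilde\rho_k\|_{\mathcal L_K^k}
\le\|(\psi_N^k-\tilde\rho_k)\mathbbm 1_{\mathcal G_{\delta,M}}\|_{\mathcal L_K^k}
+\|\psi_N^k\mathbbm 1_{\mathcal G^c_{\delta,M}}\|_{\mathcal L_K^k}
+\|\tilde\rho_k\mathbbm 1_{\mathcal G^c_{\delta,M}}\|_{\mathcal L_K^k}.
\]
The indicator of $\mathcal G^c_{\delta,M}$ is dominated by the sum of the two indicators $\mathbbm 1_{\{\min_i s_i\le\delta\}}$ and $\mathbbm 1_{\{\max_i|x_i|>M\}}$. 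By Lemma \ref{lem:LK-tightness}, applied both to $\psi_N^k$ and to $\tilde\rho_k$, the last two terms therefore vanish under $\lim_{\delta\downarrow0}\lim_{M\to\infty}\limsup_{N}$. This requires monotonicity of the seminorm under multiplication by indicators, which holds because $K\ge0$ and the bound in that lemma is of the form $\int|\varphi|K|\varphi|$. It remains to show that the first term tends to $0$ as $N\to\infty$ for each fixed $\delta$ and $M$.

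On $\mathcal G_{\delta,M}$ I would use the local central limit theorem for simple random walk with the periodicity correction. For increments of $Ns_i\ge N\delta$ steps and displacements of order $\sqrt N M$, the normalization $(N^{1/2}/2)p$ converges to the Gaussian kernel $\rho(s_i,x_i-x_{i-1})$ uniformly on the correct parity sublattice. The factor $1/2$ in the cell volume $2N^{-3/2}$ accounts for the parity, so the step-function extension $\hat p_N^k$ converges to $\rho_k$ uniformly on $\mathcal G_{\delta,M}$. The error from rounding $(\mathbf t,\mathbf x)$ to grid cells is absorbed by the uniform continuity of $\rho_k$ on this compact set, where all time increments are at least $\delta$. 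Symmetrizing preserves the uniform convergence, so $\sup_{\mathcal G_{\delta,M}}|\psi_N^k-\tilde\rho_k|\to0$.

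Uniform convergence on a bounded set then gives convergence in $\mathcal L_K^k$. For $\varphi$ supported in $[0,1]^k\times[-M,M]^k$ with $|\varphi|\le\epsilon$, we have
\[
|||\varphi|||^2_{\mathcal L_K^k}\le\epsilon^2\Bigl(\int_{[-M,M]^2}K(x-y)\,dx\,dy\Bigr)^k,
\]
and the integral is finite because $2H-2>-1$. Alternatively one can use \eqref{eq:norm_bound_36}.

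The main obstacle is the parity structure. The random walk kernel lives on a sublattice, so $\hat p_N^k$ does not converge pointwise to $\rho_k$ without the cell-wise averaging built into the step-function extension. I would handle this by checking the convergence cell by cell: each cell of volume $2N^{-3/2}$ carries exactly one reachable lattice point, so the normalization matches and the uniform bound holds in the cell-averaged sense. This averaged sense suffices for the integral bound above. Letting $N\to\infty$, then $M\to\infty$ and $\delta\downarrow0$, proves the lemma.
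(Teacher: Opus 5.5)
Your proposal is correct and follows essentially the same route as the paper: restrict to the region where all time increments exceed $\delta$ and $\max_i|x_i|\le M$, obtain uniform convergence there from the local central limit theorem, use local integrability of $K$ to turn this into $\mathcal L_K^k$ convergence, and control the complement with Lemma~\ref{lem:LK-tightness} and the triangle inequality. Your extra remarks make explicit two points the paper uses without comment. Positive definiteness of $K$ is what justifies the triangle inequality. Monotonicity under indicators, which holds because $K\ge0$ and $\psi_N^k,\tilde\rho_k\ge0$, is what lets the lemma be applied to the complement.
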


\begin{proof}
Fix \(\delta>0\) and \(M<\infty\), and set
\[
E_{\delta,M}:=
\left\{(\mathbf t,\mathbf x)\in\Delta_k(1)\times\mathbb R^k:
\min_{1\le i\le k}(t_i-t_{i-1})>\delta,\ 
\max_{1\le i\le k}|x_i|\le M
\right\}.
\]
On \(E_{\delta,M}\), all time increments are bounded away from zero and all
spatial variables stay in a compact set. Hence the local central limit
theorem for the one-dimensional simple symmetric random walk implies
\[
\sup_{(\mathbf t,\mathbf x)\in E_{\delta,M}}
|\psi_N^k(\mathbf t,\mathbf x)-\tilde\rho_k(\mathbf t,\mathbf x)|
\longrightarrow0 .
\]
For the local central limit theorem, see Lawler and Limic
\cite[Proposition 2.5.3 and Corollary 2.5.4]{lawler2010random}.

Moreover, \(\psi_N^k\) and \(\tilde\rho_k\) are uniformly bounded on
\(E_{\delta,M}\). Since \(2H-2\in(-1,0)\), the kernel \(K\) is locally
integrable. Therefore, by dominated convergence,
\[
\lim_{N\to\infty}
\|(\psi_N^k-\tilde\rho_k){\bf 1}_{E_{\delta,M}}\|_{\mathcal L_K^k}=0.
\]

By the triangle inequality,
\[
\begin{aligned}
\|\psi_N^k-\tilde\rho_k\|_{\mathcal L_K^k}
&\le
\|(\psi_N^k-\tilde\rho_k){\bf 1}_{E_{\delta,M}}\|_{\mathcal L_K^k}  \\
&\quad+
\|\psi_N^k{\bf 1}_{E_{\delta,M}^c}\|_{\mathcal L_K^k}
+
\|\tilde\rho_k{\bf 1}_{E_{\delta,M}^c}\|_{\mathcal L_K^k}.
\end{aligned}
\]
Since
\[
E_{\delta,M}^c
\subset
\{\min_i(t_i-t_{i-1})\le\delta\}
\cup
\{\max_i |x_i|>M\},
\]
Lemma \ref{lem:LK-tightness} gives
\[
\lim_{\delta\downarrow0}\lim_{M\to\infty}\limsup_{N\to\infty}
\left(
\|\psi_N^k{\bf 1}_{E_{\delta,M}^c}\|_{\mathcal L_K^k}
+
\|\tilde\rho_k{\bf 1}_{E_{\delta,M}^c}\|_{\mathcal L_K^k}
\right)=0.
\]
Combining this with the convergence on \(E_{\delta,M}\) yields
\[
\lim_{N\to\infty}\|\psi_N^k-\tilde\rho_k\|_{\mathcal L_K^k}=0.
\]
The proof is complete.
\end{proof}

\subsection{Gaussian Comparison and Hypercontractivity}

To establish the limiting distribution of the modified partition function $\mathfrak{Z}_{N}(\beta_{N};\bar{\omega})$, we still rely on the Gaussian comparison argument based on the Lindeberg principle for polynomial chaos developed in \cite{MDO2005, Caravenna2017,GC23}. This approach involves comparing the truncated environment $\bar{\omega}$ with a Gaussian field $\mu$ that matches the correlation structure of the original environment $\omega$.

Let $\{\eta(i, x); (i, x) \in \mathbb{N} \times \mathbb{Z}\}$ be a family of i.i.d. standard Gaussian random variables, independent of the original environment variables $\{\xi(i, x)\}$. We define a Gaussian environment $\{\mu(i, x)\}$:
\begin{equation}
\mu(i, x) = \sum_{y=-\infty}^{\infty} \psi_{y-x} \eta(i, y). \label{eq:gauss_env}
\end{equation}
By construction, $\{\mu(i, x)\}$ is a centered Gaussian field with covariance $\mathbb{E}[\mu(i, x)\mu(j, z)] = \delta_{ij} \gamma(x-z)$. Analogous to \eqref{eq:discrete_chaos}, we define the discrete Gaussian chaos:
\begin{equation}
\mathfrak{Z}_{N}(\beta_{N}; \mu) = 1 + \sum_{k=1}^{N} \beta_{N}^{k} \sum_{(\mathbf{t},\mathbf{x}) \in \Delta_{k}^{(N)} \times \mathcal{X}_{N}^{k}} p_{N}^{k}(\mathbf{t},\mathbf{x}) \mu_{N}^{k}(\mathbf{t},\mathbf{x}), \label{eq:gauss_chaos}
\end{equation}
where $\mu_{N}^{k}(\mathbf{t},\mathbf{x}) := \prod_{j=1}^{k} \mu(Nt_{j}, N^{1/2}x_{j})$. Our goal is to show that $\mathfrak{Z}_{N}(\beta_{N}; \bar{\omega})$ and $\mathfrak{Z}_{N}(\beta_{N}; \mu)$ have the same asymptotic distribution.
Following the framework in \cite{MDO2005}, we define the truncated chaos of order $m$:
\begin{equation}
\mathfrak{Z}_{N}^{\le m}(\beta_{N}; \bar{\omega}) := 1 + \sum_{k=1}^{m} \beta_{N}^{k} \sum_{(\mathbf{t},\mathbf{x}) \in \Delta_{k}^{(N)} \times \mathcal{X}_{N}^{k}} p_{N}^{k}(\mathbf{t},\mathbf{x}) \bar{\omega}_{N}^{k}(\mathbf{t},\mathbf{x}),
\end{equation}
and let $\mathfrak{Z}_{N}^{\le m}(\beta_{N}; \mu)$ be defined similarly.

To control the error between the truncated environment $\bar{\omega}$ and the Gaussian environment $\mu$ from the variable exchanges, we utilize the hypercontractivity of the noise ensemble. For $1 \le p \le q < \infty$ and $\tau \in (0, 1)$, a random variable $X$ is said to be $(p, q, \tau)$-hypercontractive if for all $a \in \mathbb{R}$:
\begin{equation}
\|a + \tau X\|_q \le \|a + X\|_p, \label{eq:hyper_def}
\end{equation}
and it is known that if $\mathbb{E}[X] = 0$ and $\mathbb{E}[|X|^q] < \infty$ for $q > 2$, then $X$ is $(2, q, \tau)$-hypercontractive with $\tau = \frac{\|X\|_2}{2(q-1)^{1/2}\|X\|_q}$. In our setting, since the truncated underlying noise $\bar{\xi}$ is centered and possesses uniformly bounded moments of order $q$ for any $2 < q < \alpha$, the ensemble satisfies this condition with a contraction parameter $\tau$ uniformly bounded away from zero.

Generally, let $\mathcal{X} = \{\mathcal{X}_1, \dots, \mathcal{X}_n\}$ be an orthonormal ensemble of independent random variables. For a multilinear polynomial $Q(x) = \sum_{\sigma} c_{\sigma} x_{\sigma}$ over $\mathcal{X}$, where $\sigma = (\sigma_1, \dots, \sigma_n)$ is a multi-index and $x_{\sigma} := \prod_{i=1}^n \mathcal{X}_{i, \sigma_i}$, the ensemble $\mathcal{X}$ is $(p, q, \tau)$-hypercontractive if:
\begin{equation}
\| T_{\tau} Q \|_q \le \| Q \|_p, \label{eq:operator_hyper}
\end{equation}
for every such polynomial $Q$, where the operator $T_{\tau}$ is defined by $(T_{\tau} Q)(x) := \sum_{\sigma} \tau^{|\sigma|} c_{\sigma} x_{\sigma}$.

\begin{lemma}\label{lem:gaussian_inv}
Assume that conditions \textbf{(A)} and \textbf{(B)} hold.
Assume moreover that \(\alpha>2\). Choose and fix
$q\in(2,\min\{\alpha,3\})$ such that \(\mathbb E_{\mathbb Q}[|\xi|^q]<\infty\).
We also use the parameter choices for \(\beta_N\) in \eqref{beta_N}
and \(k_N\) in \eqref{k_N}. Then, for every fixed \(m\ge1\),
the truncated chaos expansions \(\mathfrak Z_N^{\le m}(\beta_N;\overline\omega)\)
and \(\mathfrak Z_N^{\le m}(\beta_N;\mu)\) possess the same limiting
distribution as \(N\to\infty\). That is, for any test function
\(f\in C_b^{(3)}(\mathbb R)\),
\begin{equation}
\lim_{N\to\infty}
\left|
\mathbb E_{\mathbb Q}
\left[f\left(\mathfrak Z_N^{\le m}(\beta_N;\overline\omega)\right)\right]
-
\mathbb E_{\mathbb Q}
\left[f\left(\mathfrak Z_N^{\le m}(\beta_N;\mu)\right)\right]
\right|=0 .
\label{eq:inv_limit}
\end{equation}
\end{lemma}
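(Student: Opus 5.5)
The plan is to run the Lindeberg replacement at the level of the underlying independent variables $\bar\xi(i,y)$ rather than the correlated field $\bar\omega$. Since $\bar\omega(i,x)=\sum_y\psi_{y-x}\bar\xi(i,y)$ and $\mu(i,x)=\sum_y\psi_{y-x}\eta(i,y)$, the truncated chaos $\mathfrak Z_N^{\le m}(\beta_N;\bar\omega)$ is a polynomial of degree at most $m$ in the independent family $\{\bar\xi(i,y)/\sigma_{\bar\xi}\}$. It is multilinear in distinct time indices, because each time level enters at most once per monomial.

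First I will reduce to finitely many variables. I truncate the spatial sum over $y$ to $|y|\le R_N$ and the walk range to $|x|\le N$. The $L^2$ error of this truncation is controlled by $\beta_N^2\sum_i\sum_{|y|>R_N}\psi^2\cdot(\ldots)$ together with the chaos bounds \eqref{eq:general_bound_35}, and it vanishes as $R_N\to\infty$. After that step, $f\in C^3_b$ gives the same conclusion for the untruncated objects. I will also normalize $\bar\xi$ by $\sigma_{\bar\xi}$ and use \eqref{eq:covariance_gap}: since $\sigma_{\bar\xi}\to1$, the variance mismatch changes the $k$-th chaos by a factor $\sigma_{\bar\xi}^k\to1$. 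This is harmless for fixed $m$.

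Next I apply the invariance principle of \cite{MDO2005}, in the form used in \cite{Caravenna2017,GC23}. For a multilinear polynomial $Q$ of degree $m$ in independent ensembles $X$ and $G$ that are $(2,q,\tau)$-hypercontractive and have matching first two moments, it gives
\[
|\mathbb E f(Q(X))-\mathbb E f(Q(G))|\le C_f\,\tau^{-qm}\sum_{(i,y)}\mathrm{Inf}_{(i,y)}(Q)^{q/2},
\]
with $q\in(2,3)$. The hypercontractivity constant is uniform in $N$ because $\mathbb E|\bar\xi|^q\le C\mathbb E|\xi|^q$, as noted before the statement. The influence of $(i,y)$ is
\[
\mathrm{Inf}_{(i,y)}=\sum_{k\le m}\beta_N^{2k}\sum \big(\text{coefficient involving }\bar\xi(i,y)\big)^2 .
\]
I then bound the sum of influences to the power $q/2$ by $\max_{(i,y)}\mathrm{Inf}_{(i,y)}^{q/2-1}\cdot\sum\mathrm{Inf}$. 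The total influence is $\sum_k k\,\mathrm{Var}(k\text{-th chaos})$, which stays bounded by Lemma~\ref{lem:3.1}, using $\beta_NN^{H/2}$ bounded in both regimes.

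The main obstacle is showing $\max_{(i,y)}\mathrm{Inf}_{(i,y)}\to0$ uniformly. A single $\bar\xi(i,y)$ feeds $\bar\omega(i,x)$ for every $x$ with weight $\psi_{y-x}$. Its first-order contribution is therefore $\beta_N\sum_x\psi_{y-x}p_N(i,x)$. I will bound this by $\beta_N\sup_x p_N(i,x)\sum_{|z|\le\sqrt i}\psi_z+\beta_N\sup_{|z|>\sqrt i}\psi_z$, which is at most $C\beta_N\, i^{-r/2}$. Squaring gives $\beta_N^2 i^{-r}$. Against the normalization $\beta_N^2N^{H}\asymp$ const, this is $N^{-H}i^{-r}$, so the worst case $i=1$ contributes $O(N^{-H})$. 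Higher chaoses are handled in the same way: I fix the $(i,y)$ factor, bound it as above, and bound the rest of the chain by the chaos norms of Lemma~\ref{lem:3.1}, using the Markov property at time $i$. This yields $\max\mathrm{Inf}\le C_mN^{-c}$ for some $c>0$, and hence the Lindeberg error is $O(N^{-c(q/2-1)})\to0$, which proves \eqref{eq:inv_limit}.
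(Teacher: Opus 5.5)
Your proposal is correct, but it organizes the Lindeberg replacement differently from the paper.

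\textbf{The paper's route.} The paper swaps one whole time layer at a time, $\bar\omega(j,\cdot)\to\mu(j,\cdot)$, and writes the two adjacent configurations as $\widetilde Q_j+R_j$ and $\widetilde Q_j+S_j$.
\begin{itemize}
\item First-order terms vanish by independence across times.
\item Second-order terms match exactly up to the scalar $\sigma_N^2$, because $\bar\gamma=\sigma_N^2\gamma$. The variance defect therefore appears only as $\varepsilon_N\sum_j\mathbb E_{\mathbb Q}[S_j^2]$, and $k_N\ge cN^{H/2}$ makes it polynomially small.
\item The $q$-th order remainder is controlled by hypercontractivity of $R_j,S_j$, viewed as multilinear polynomials in the innovations (via a finite-range cutoff $M$ and Fatou).
\item This is combined with the layer estimates $\sup_j\mathbb E_{\mathbb Q}[R_j^2]\le C_mN^{-H}$ and $\sum_j\mathbb E_{\mathbb Q}[R_j^2]\le C_m$. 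Both come from the time-simplex integral with the indicator of $((j-1)/N,j/N]$ inserted.
\end{itemize}

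\textbf{Your route.} You swap individual innovations $\bar\xi(i,y)\leftrightarrow\eta(i,y)$ and apply the invariance theorem of \cite{MDO2005,Caravenna2017} as a black box. You absorb the variance defect by normalizing and comparing the $\sigma_{\bar\xi}^k$-rescaled chaoses in $L^2$; this needs only $\sigma_{\bar\xi}\to1$. The price is a site-level max-influence bound, which is exactly where the non-locality enters, since one innovation feeds every $\bar\omega(i,x)$.
\begin{itemize}
\item Your bound $\sum_x\psi_{y-x}p_N(i,x)\le Ci^{-r/2}$ handles the first chaos.
\item For higher chaoses, your Markov-property sketch needs one extra ingredient. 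The part of the chain after time $i$ must be bounded uniformly in its two starting points $x\neq z$. Positive semidefiniteness of $\prod\gamma$ (a covariance) and Cauchy--Schwarz in that bilinear form give this.
\item The part before time $i$ is bounded by the full lower-order chaos variance, using $p_N\ge0$ and $\gamma\ge0$.
\item Together these give $\max_{(i,y)}\mathrm{Inf}_{(i,y)}\le C_m\beta_N^2$, so your error is of the same order as the paper's, $O(\beta_N^{q-2})$, which is at most $CN^{-H(q-2)/2}$ in both regimes.
\end{itemize}

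\textbf{Comparison.} The paper's block swap never computes influences of single innovations: a whole correlated row is one block, whose influence is $\mathbb E_{\mathbb Q}[R_j^2]$. It also needs only proportionality of row covariances, not equality of individual moments. Your route is more modular. It applies verbatim to any environment that is linear in independent innovations once a max-influence bound is available, and it handles the variance mismatch without any rate on $k_N$.
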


\begin{proof}
For a fixed \(f\in C_b^{(3)}(\mathbb R)\), let
\[
C_f:=
\max\{\|f\|_\infty,\|f'\|_\infty,\|f''\|_\infty,\|f'''\|_\infty\}.
\]
To estimate the difference in \eqref{eq:inv_limit}, we consider the
discrete space-time set \(T=\mathbb N\times\mathbb Z\). To ensure that the
sums are well-defined, we first restrict the indices to a finite block
\[
T_N:=\{(i,x_{i,m})\in T:1\le i\le N,\ 1\le m\le N\}.
\]

The environment vectors for each time step \(i\in\{1,\dots,N\}\)
are
\[
\overline{\boldsymbol\omega}_i
:=
(1,\overline\omega(i,x_{i,1}),\dots,\overline\omega(i,x_{i,N})),
\]
\[
\boldsymbol{\mu}_i
:=
(1,\mu(i,x_{i,1}),\dots,\mu(i,x_{i,N})).
\]
An interpolating sequence \(X^{(j)}\) to facilitate the
variable-by-variable replacement are defined by
\[
X^{(j)}
=(X_1^{(j)},\dots,X_N^{(j)})
:=
(\overline{\boldsymbol\omega}_1,\dots,\overline{\boldsymbol\omega}_j,
\boldsymbol{\mu}_{j+1},\dots,\boldsymbol{\mu}_N),
\qquad j=0,1,\dots,N.
\]
Thus \(X^{(N)}\) corresponds to the truncated environment \(\overline\omega\),
while \(X^{(0)}\) corresponds to the Gaussian environment \(\mu\).
For each \(k\in\{1,\dots,N\}\), we write
\[
X_k^{(j)} = \{X_{k,0}^{(j)},\dots,X_{k,N}^{(j)}\},
\qquad X_{k,0}^{(j)}\equiv1,
\]
where \(X_{k,m}^{(j)}\) represents the noise value at the space-time point
\((k,x_{k,m}), m=1,2,\dots,N\).

For any multi-index \(\sigma=(\sigma_1,\dots,\sigma_N)\in\{0,1,\dots,N\}^N\),
let \(|\sigma|\) denote the number of non-zero entries. Let
\(\mathbf m_\sigma=(i_1,\dots,i_{|\sigma|})\) be the temporal indices where
\(\sigma_{i_\ell}\ne0\), and let
\[
\mathbf x_\sigma
=
(x_{i_1,\sigma_{i_1}},\dots, x_{i_{|\sigma|},\sigma_{i_{|\sigma|}}})
\]
be the corresponding coordinates. The truncated Gaussian chaos can
be expressed as a multilinear polynomial:
\begin{equation}
\mathfrak Z_N^{\le m}(\beta_N;\mu)
=
1+\sum_{k=1}^m\sum_{\sigma:|\sigma|=k} c_\sigma\prod_{i=1}^N X_{i,\sigma_i}^{(0)}
:=Q(X^{(0)}),
\label{eq:chaos_poly}
\end{equation}
where the coefficients are determined by the discrete transition probabilities:
\[
c_\sigma = \beta_N^{|\sigma|} p_N^{|\sigma|}
\left(\frac{\mathbf m_\sigma}{N},\mathbf x_\sigma\right).
\]

By defining \(Q(X^{(j)})\) analogously for \(j=1,\dots,N\), we separate the terms
where \(\sigma_j=0\) from those where \(\sigma_j>0\):
\begin{equation}
\begin{aligned}
Q(X^{(j)})
&=
\sum_{\substack{|\sigma|\le m\\ \sigma_j=0}}
c_\sigma\prod_{k\ne j}X_{k,\sigma_k}^{(j)}
+
\sum_{\substack{|\sigma|\le m\\ \sigma_j>0}}
c_\sigma X_{j,\sigma_j}^{(j)} \prod_{k\ne j}X_{k,\sigma_k}^{(j)} \\
&=:\widetilde Q_j+R_j .
\end{aligned}
\label{eq:poly_decomp}
\end{equation}
Here \(\widetilde Q_j\) is independent of the \(j\)-th time layer, while
\(R_j\) captures the local fluctuation driven by \(\overline\omega\) at
step \(j\). Similarly, the adjacent interpolating configuration is
\(\widetilde Q_j+S_j\), where
\[
S_j
=
\sum_{\substack{|\sigma|\le m\\ \sigma_j>0}}
c_\sigma\mu(j,x_{j,\sigma_j}) \prod_{\substack{k\ne j\\1\le k\le N}}X_{k,\sigma_k}^{(j)} .
\]
By summing over all \(N\) replacement steps, we obtain the telescoping sum
\begin{align}
&f\left(\mathfrak Z_N^{\le m}(\beta_N;\overline\omega)\right)
-
f\left(\mathfrak Z_N^{\le m}(\beta_N;\mu)\right) \nonumber\\
&\qquad
=
\sum_{j=1}^N \left(f(Q(X^{(j)}))-f(Q(X^{(j-1)}))\right) \nonumber\\
&\qquad
=
\sum_{j=1}^N \left(f(\widetilde Q_j+R_j)-f(\widetilde Q_j+S_j)\right).
\label{eq:telescoping}
\end{align}

Since \(f\in C_b^{(3)}(\mathbb R)\), there exists \(C_{f,q}<\infty\) such that,
for all \(x,y\in\mathbb R\),
\begin{equation}
\left| f(x+y)-f(x)-f'(x)y-\frac12 f''(x)y^2 \right| \le C_{f,q}|y|^q .
\label{eq:q_taylor}
\end{equation}
Indeed, if \(|y|\le1\), the usual third-order Taylor formula gives a bound
by \(C_f|y|^3\le C_f|y|^q\), since \(q\le3\). If \(|y|>1\), the boundedness
of \(f,f'\), and \(f''\) gives a bound by \(C_f(1+|y|+|y|^2)\le C_{f,q}|y|^q\),
since \(q>2\).

Applying \eqref{eq:q_taylor} to \(R_j\) and \(S_j\), taking expectation,
and using the triangle inequality, we get
\begin{align}
&\left|
\mathbb E_{\mathbb Q}
\left[f(\widetilde Q_j+R_j)-f(\widetilde Q_j+S_j)\right]
\right|
\nonumber\\
&\qquad
\le
\left|
\mathbb E_{\mathbb Q}
\left[f'(\widetilde Q_j)(R_j-S_j)\right]
\right|
+
\frac12
\left|
\mathbb E_{\mathbb Q}
\left[f''(\widetilde Q_j)(R_j^2-S_j^2)\right]
\right|
+
C_{f,q}\mathbb E_{\mathbb Q}\left[|R_j|^q+|S_j|^q\right].
\label{eq:local_taylor_bound}
\end{align}
The first-order terms vanish. Indeed, the product
\(\prod_{k\ne j}X_{k,\sigma_k}^{(j)}\) is independent of both
\(\overline\omega(j,\cdot)\) and \(\mu(j,\cdot)\), and both fields are centered.
Hence
\[
\mathbb E_{\mathbb Q}[f'(\widetilde Q_j)R_j] = \mathbb E_{\mathbb Q}[f'(\widetilde Q_j)S_j] = 0 .
\]

We now compare the second-order terms. Let
\[
\sigma_N^2:=\mathbb E_{\mathbb Q}[\overline\xi(i,x)^2].
\]
we have the exact covariance identity \(\overline\gamma(z)=\sigma_N^2\gamma(z)\).
By the covariance gap estimate \eqref{eq:covariance_gap},
\[
\varepsilon_N:=|1-\sigma_N^2| \le C k_N^{2-\alpha}L_2(k_N).
\]
Because \(q<\alpha\), Potter's bound for slowly varying functions gives,
for all large \(N\),
\[
\varepsilon_N \le C_q k_N^{2-q}.
\]
Using the lower bound \(k_N\ge cN^{H/2}\), we obtain
\begin{equation}
\varepsilon_N\le C_q N^{-H(q-2)/2}.
\label{eq:eps_absorb}
\end{equation}
Expanding \(R_j^2\) and \(S_j^2\), and using the independence of \(\widetilde Q_j\)
from the \(j\)-th time layer, gives
\begin{align}
&\mathbb E_{\mathbb Q} \left[f''(\widetilde Q_j)R_j^2\right]
- \mathbb E_{\mathbb Q} \left[f''(\widetilde Q_j)S_j^2\right]
\nonumber\\
&= \sum_{\substack{\sigma_j^1>0,\ \sigma_j^2>0\\ |\sigma^1|\le m,\ |\sigma^2|\le m}}
c_{\sigma^1}c_{\sigma^2}
\mathbb E_{\mathbb Q} \left[ f''(\widetilde Q_j)
\left(\prod_{k\ne j}X_{k,\sigma_k^1}^{(j)}\right)
\left(\prod_{k\ne j}X_{k,\sigma_k^2}^{(j)}\right) \right] \nonumber\\
&\qquad\times
\left( \mathbb E_{\mathbb Q} [\overline\omega(j,x_{j,\sigma_j^1})
\overline\omega(j,x_{j,\sigma_j^2})]
- \mathbb E_{\mathbb Q} [\mu(j,x_{j,\sigma_j^1}) \mu(j,x_{j,\sigma_j^2})] \right).
\label{eq:second_order_expansion}
\end{align}
By \(\overline\gamma=\sigma_N^2\gamma\), the expansion above is exactly
\[
\mathbb E_{\mathbb Q} \left[f''(\widetilde Q_j)R_j^2\right]
= \sigma_N^2 \mathbb E_{\mathbb Q} \left[f''(\widetilde Q_j)S_j^2\right].
\]
Therefore,

\begin{equation}
\left| \mathbb E_{\mathbb Q} \left[f''(\widetilde Q_j)(R_j^2-S_j^2)\right] \right|
\le C_f\varepsilon_N\,\mathbb E_{\mathbb Q}[S_j^2].
\label{eq:second_order_absorbed_local}
\end{equation}
Combining \eqref{eq:local_taylor_bound}, \eqref{eq:second_order_absorbed_local},
and the vanishing first-order terms, we have
\begin{equation}
\left| \mathbb E_{\mathbb Q} \left[f(\widetilde Q_j+R_j)-f(\widetilde Q_j+S_j)\right] \right|
\le
C_{f,q}\mathbb E_{\mathbb Q}\left[|R_j|^q+|S_j|^q\right]
+ C_f\varepsilon_N\,\mathbb E_{\mathbb Q}[S_j^2].
\label{eq:local_q_plus_second}
\end{equation}

We next control the \(q\)-th moments in \eqref{eq:local_q_plus_second}.
Temporarily truncate the spatial moving average at range \(M>0\), i.e.,
\[
\mu^{(M)}(i,x)=\sum_{|y|\le M}\psi_y\eta(i,x+y),
\quad
\overline\omega^{(M)}(i,x) =\sum_{|y|\le M}\psi_y\overline\xi(i,x+y).
\]
Let \(R_{j,M}\) and \(S_{j,M}\) be obtained from \(R_j\) and \(S_j\) by
replacing \(\overline\omega,\mu\) with \(\overline\omega^{(M)},\mu^{(M)}\).
These are multilinear polynomials of degree at most \(m\) over a finite
collection of independent random variables.

Since \(q<\alpha\) and \(\mathbb E_{\mathbb Q}[|\xi|^q]<\infty\),
the truncated noise inherits this moment bound, yielding
\[
    \sup_{N\ge1}\mathbb E_{\mathbb Q}[|\overline\xi|^q]<\infty .
\]
Moreover, \(\sigma_N\to1\). After normalizing the non-Gaussian variables
by \(\widehat\xi_N:=\overline\xi/\sigma_N\) for all large \(N\), the
finite independent ensemble generated by \(\widehat\xi_N\) and \(\eta\) is
\((2,q,\tau_q)\)-hypercontractive. According to \cite{MDO2005},
the constant \(\tau_q\) can be explicitly given by
\[
    \tau_q = \min \left\{
    \frac{1}{2^{q/2} \mathbb{E}_{\mathbb{Q}}[|\widehat{\xi}_N|^q]^{1/q}},
    \frac{1}{2^{q/2} \mathbb{E}_{\mathbb{Q}}[|\eta|^q]^{1/q}}
    \right\}.
\]
The \(\tau_q > 0\)
is bounded away from zero and is independent of \(N,j\), and \(M\).
By the hypercontractive estimate for multilinear polynomials of degree at
most \(m\), 
\cite[Propositions 3.11 and 3.12]{MDO2005},
\begin{equation}
\mathbb E_{\mathbb Q} \left[|R_{j,M}|^q+|S_{j,M}|^q\right]
\le
\tau_q^{-mq} \left( \mathbb E_{\mathbb Q}[R_{j,M}^2]^{q/2}
+ \mathbb E_{\mathbb Q}[S_{j,M}^2]^{q/2} \right).
\label{eq:q_hyper_M}
\end{equation}
Letting \(M\to\infty\), using \(L^2\)-convergence of the finite-range
moving averages and Fatou's lemma, yields
\begin{equation}
\mathbb E_{\mathbb Q} \left[|R_j|^q+|S_j|^q\right]
\le
\tau_q^{-mq} \left( \mathbb E_{\mathbb Q}[R_j^2]^{q/2}
+ \mathbb E_{\mathbb Q}[S_j^2]^{q/2} \right).
\label{eq:q_hyper}
\end{equation}

We now proceed to the second-moment estimates. This part follows the
original proof. Let \(\mathbf t=(t_1,\dots,t_k)\in\Delta_k^{(N)}\), and
write \(j/N\in\mathbf t\) if \(j/N\) is one of the components of
\(\mathbf t\). Expanding the multilinear polynomial for \(R_j\), and using
independence across different time steps, the cross terms between different
chaos orders vanish. Therefore,
\begin{align}
\mathbb E_{\mathbb Q}[R_j^2]
&=
\sum_{k=1}^m \beta_N^{2k}
\sum_{\substack{\mathbf t\in\Delta_k^{(N)}\\ j/N\in\mathbf t}}
\sum_{\mathbf x^1,\mathbf x^2\in\mathcal X_N^k}
p_N^k(\mathbf t,\mathbf x^1) p_N^k(\mathbf t,\mathbf x^2) \nonumber\\
&\qquad\qquad \times
\mathbb E_{\mathbb Q} \left[ \prod_{i=1}^k
X_{Nt_i,N^{1/2}x_i^1}^{(j)} X_{Nt_i,N^{1/2}x_i^2}^{(j)} \right].
\label{eq:Rj_second_expansion}
\end{align}

To bound this discrete sum, we express it as an integral over
the continuous space-time simplex. Let
\[
        \mathcal I_j^{(N)}(\mathbf t)
        =
        \sum_{i=1}^k
        \mathbf 1_{I_j}(t_i),
        \qquad
        I_j:=((j-1)/N,j/N],
\]
Using the symmetrized density
\(\psi_N^k\), and the factor \(k!\) from extending the simplex to
\([0,1]^k\), we obtain
\begin{align}
\mathbb E_{\mathbb Q}[R_j^2]
&\le
C\sum_{k=1}^m (\beta_NN^{H/2})^{2k}k!
\int_{[0,1]^k} \int_{\mathbb R^{2k}}
|\psi_N^k(\mathbf t,\mathbf x)|\, \mathcal I_j^{(N)}(\mathbf t) \nonumber\\
&\qquad\qquad \times
\left|\overline\gamma_N^{(k)}(\mathbf x-\mathbf y)\right|
|\psi_N^k(\mathbf t,\mathbf y)|
\,d\mathbf x\,d\mathbf y\,d\mathbf t .
\label{eq:Rj_indicator_integral}
\end{align}

Applying the generalized fractional-kernel bound, it gives:
\begin{align}
\mathbb E_{\mathbb Q}[R_j^2]
&\le
C\sum_{k=1}^m(\beta_NN^{H/2})^{2k}k!
\int_{[0,1]^k}\int_{\mathbb R^{2k}}
|\psi_N^k(\mathbf t,\mathbf x)|\,
\mathcal I_j^{(N)}(\mathbf t) \nonumber\\
&\qquad\qquad \times
\left(\prod_{i=1}^kK(x_i-y_i)\right)
|\psi_N^k(\mathbf t,\mathbf y)|
\,d\mathbf x\,d\mathbf y\,d\mathbf t .
\label{eq:Rj_local_integral}
\end{align}
The spatial integrations give a factor \(C^k\),
where the constant \(C\) is universal; the remaining time integral is bounded by
\[
\int_{0<t_1<\cdots<t_k<1}
\mathcal I_j^{(N)}(\mathbf t)
\prod_{a=1}^k(t_a-t_{a-1})^{H-1}\,d\mathbf t ,
\qquad t_0:=0 .
\]
Since
\[
        \mathcal I_j^{(N)}(\mathbf t)
        =
        \sum_{i=1}^k
        \mathbf 1_{I_j}(t_i),
        \qquad
        I_j:=((j-1)/N,j/N],
\]
we fix \(i\).  Put \(t_i=s\).  The variables before \(s\) give
\[
\int_{0<t_1<\cdots<t_{i-1}<s}
\prod_{a=1}^{i-1}(t_a-t_{a-1})^{H-1}
(s-t_{i-1})^{H-1}\,dt_1\cdots dt_{i-1}
=
\frac{\Gamma(H)^i}{\Gamma(iH)}s^{iH-1},
\]
and the variables after \(s\) give
\[
\int_{s<t_{i+1}<\cdots<t_k<1}
\prod_{a=i+1}^k(t_a-t_{a-1})^{H-1}
\,dt_{i+1}\cdots dt_k
=
\frac{\Gamma(H)^{k-i}}{\Gamma((k-i)H+1)}
(1-s)^{(k-i)H}.
\]
Therefore
\[
\begin{aligned}
\int_{0<t_1<\cdots<t_k<1}
\mathbf 1_{I_j}(t_i)
\prod_{a=1}^k(t_a-t_{a-1})^{H-1}\,d\mathbf t\le
\frac{\Gamma(H)^k}
{\Gamma(iH)\Gamma((k-i)H+1)}
\int_{I_j}s^{iH-1}\,ds .
\end{aligned}
\]
For all \(i\ge1\), since \(0<s\le1\), \(
s^{iH-1}\le s^{H-1}.
\) Thus
\[
\int_{I_j}s^{iH-1}\,ds
\le
\int_{0}^{1/N}s^{H-1}\,ds
=
\frac1H N^{-H}.
\]
Summing over \(i=1,\dots,k\), we obtain
\[
\begin{aligned}
\int_{0<t_1<\cdots<t_k<1}
\mathcal I_j^{(N)}(\mathbf t)
\prod_{a=1}^k(t_a-t_{a-1})^{H-1}\,d\mathbf t\le
\frac{N^{-H}}{H}
\Gamma(H)^k
\sum_{i=1}^k
\frac{1}
{\Gamma(iH)\Gamma((k-i)H+1)} .
\end{aligned}
\]
Hence
\[
\mathbb E_{\mathbb Q}[R_j^2]
\le
CN^{-H}\sum_{k=1}^m
(\beta_NN^{H/2})^{2k} B_{k,H},
\]
where
\[
B_{k,H}
:=
\frac{C^k\Gamma(H)^k}{H}
\sum_{\ell=1}^k
\frac{1}
{\Gamma(\ell H)\Gamma((k-\ell)H+1)} .
\]
Because \(\beta_NN^{H/2}\) is bounded, and \(m\) is
fixed, consequently,
\begin{equation}
\sup_{1\le j\le N}\mathbb E_{\mathbb Q}[R_j^2] \le C_mN^{-H},
\qquad
\sup_{1\le j\le N}\mathbb E_{\mathbb Q}[S_j^2] \le C_mN^{-H}.
\label{eq:variance_sup_bound}
\end{equation}

To evaluate the sum of the variances over \(j=1,\dots,N\), we retain
\(\mathcal I_j^{(N)}\) in \eqref{eq:Rj_indicator_integral}. For every
\(\mathbf t\in[0,1]^k\),
\[
\sum_{j=1}^N\mathcal I_j^{(N)}(\mathbf t)
=
\sum_{j=1}^N\sum_{i=1}^k \mathbf 1_{((j-1)/N,j/N]}(t_i) = k.
\]
Starting from \eqref{eq:Rj_local_integral}, since \(k\le m\), Lemma \ref{lem:3.1} yields
\begin{equation}
\sum_{j=1}^N\mathbb E_{\mathbb Q}[R_j^2]
\le
Cm\sum_{k=1}^m k(\sqrt{2}\beta_NN^{H/2})^{2k}
\frac{A_H^k\Gamma^k(H)}{\Gamma(kH+1)} \le C_m .
\label{eq:variance_sum_bound}
\end{equation}
Similarly,
\begin{equation}
\sum_{j=1}^N\mathbb E_{\mathbb Q}[S_j^2]\le C_m .
\label{eq:S_variance_sum_bound}
\end{equation}

Combine \eqref{eq:eps_absorb},\eqref{eq:q_hyper},\eqref{eq:variance_sup_bound},\eqref{eq:variance_sum_bound}, and \eqref{eq:S_variance_sum_bound}
\begin{align}
&\left| \mathbb E_{\mathbb Q} \left[
f\left(\mathfrak Z_N^{\le m}(\beta_N;\overline\omega)\right) \right]
- \mathbb E_{\mathbb Q} \left[
f\left(\mathfrak Z_N^{\le m}(\beta_N;\mu)\right) \right] \right| \nonumber\\
&\qquad
\le C_{f,q}\tau_q^{-mq} \sum_{j=1}^N
\left( \mathbb E_{\mathbb Q}[R_j^2]^{q/2} + \mathbb E_{\mathbb Q}[S_j^2]^{q/2} \right)
+ C_f\varepsilon_N\sum_{j=1}^N\mathbb E_{\mathbb Q}[S_j^2] \nonumber\\
&\qquad
\le C_{f,m,q}\tau_q^{-mq} \left( \sup_{1\le j\le N}
\bigl(\mathbb E_{\mathbb Q}[R_j^2] + \mathbb E_{\mathbb Q}[S_j^2]\bigr) \right)^{(q-2)/2} \nonumber\\
&\qquad\qquad \times \sum_{j=1}^N
\bigl(\mathbb E_{\mathbb Q}[R_j^2] + \mathbb E_{\mathbb Q}[S_j^2]\bigr)
+ C_{f,m}\varepsilon_N \nonumber\\
&\qquad
\le C_{f,m,q}\tau_q^{-mq}N^{-H(q-2)/2} + C_{f,m}N^{-H(q-2)/2} \nonumber\\
&\qquad
= O\left(N^{-H(q-2)/2}\right).
\label{eq:final_error_alpha_gt_2}
\end{align}
The right-hand side converges to zero because \(q>2\) and \(H>0\). This
proves \eqref{eq:inv_limit}.

Finally, removing the finite-block restriction \(T_N\) follows from the same
square-summability estimates in Lemma \ref{lem:3.1}, uniformly in the finite
block. The proof is complete.
\end{proof}

\subsection{Scaling Limit of the Modified Partition Function}

By the Gaussian invariance principle established in Lemma \ref{lem:gaussian_inv}, we now present the main convergence result for the modified partition function $\mathfrak{Z}_N(\beta_N; \bar{\omega})$.

\begin{theorem}\label{thm:scaling_limit}
Assume that conditions \textbf{(A)} and \textbf{(B)} hold. Let the tail exponent satisfy $\alpha > \frac{6}{3-2r}$. If the inverse temperature sequence $\beta_N$ satisfies the thermodynamic scaling $\beta_N N^{H/2} \to \beta \in [0, \infty)$ as $N \to \infty$. Then the modified partition function $\mathfrak{Z}_N(\beta_N; \bar{\omega})$ converges in distribution to the continuous Wiener chaos $Z_{\sqrt{2}\beta}(1, \cdot)$ given by:
\begin{equation}
Z_{\sqrt{2}\beta}(1, \cdot) = 1 + \sum_{k=1}^{\infty} (\sqrt{2}\beta)^{k} \int_{\Delta_{k}(1)} \int_{\mathbb{R}^k} \tilde{\rho}_{k}(\mathbf{t}, \mathbf{x}) W(\mathrm{d}\mathbf{t}, \mathrm{d}\mathbf{x}).
\end{equation}
Here, the notations
\begin{itemize}
    \item $\Delta_k(1) = \{0 < t_1 < \dots < t_k \le 1\}$ denotes the standard $k$-dimensional time simplex;
    \item $\tilde{\rho}_{k}(\mathbf{t}, \mathbf{x})$ is the symmetrized Gaussian heat kernel introduced in Lemma \ref{lem:3.2};
    \item $W(\mathrm{d}\mathbf{t}, \mathrm{d}\mathbf{x})$ is a centered Gaussian space-time white noise, whose spatial covariance structure is  the fractional kernel $K(z) = H(2H-1)|z|^{2H-2}$ defined in Section 1.
\end{itemize}
\end{theorem}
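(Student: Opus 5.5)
The plan is a three-step approximation: truncate the chaos in order, swap $\bar\omega$ for the Gaussian field $\mu$ on each finite-order piece, and identify the limit of the Gaussian chaos. It is carried out in the integral representation \eqref{integral-representation}. Write $\mathfrak Z_N(\beta_N;\bar\omega)=\mathfrak Z_N^{\le m}(\beta_N;\bar\omega)+\mathfrak R_N^{>m}(\bar\omega)$, and similarly for $\mu$ and for the limit $Z_{\sqrt2\beta}(1,\cdot)=Z^{\le m}+Z^{>m}$. Since $C_b^{(3)}$ test functions determine convergence in law, it suffices to prove three things:
\[
\lim_{m\to\infty}\limsup_{N\to\infty}\mathbb E_{\mathbb Q}\bigl[(\mathfrak R_N^{>m}(\bar\omega))^2\bigr]=0,\qquad \lim_{m\to\infty}\mathbb E\bigl[(Z^{>m})^2\bigr]=0,
\]
and, for each fixed $m$, $\mathfrak Z_N^{\le m}(\beta_N;\mu)\Rightarrow Z^{\le m}$. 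Lemma \ref{lem:gaussian_inv} then transfers the last convergence to $\bar\omega$. A standard $3\varepsilon$ argument, comparing $\mathbb E f(\cdot)$ for Lipschitz $f\in C_b^{(3)}$, finishes the proof.

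\textbf{Tail control.} Different chaos orders and different time sets are orthogonal in $L^2(\mathbb Q)$, because the layers $\bar\omega(i,\cdot)$ are independent in $i$ and centered. Hence
\[
\mathbb E_{\mathbb Q}\bigl[(\mathfrak R_N^{>m})^2\bigr]=\sum_{k>m}(\beta_NN)^{2k}k!\int\!\!\int \psi_N^k(\mathbf t,\mathbf x)\,\bar\gamma(\cdot)\,\psi_N^k(\mathbf t,\mathbf y)\,d\mathbf t\,d\mathbf x\,d\mathbf y .
\]
Rescaling turns $(\beta_NN)^{2k}$ together with the cell volumes into $(\beta_NN^{H/2})^{2k}$ times $\bar\gamma_N^{(k)}$, up to the constant $2^k$ coming from the cell volume. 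Then \eqref{eq:general_bound_35} with $\sigma_N^2\le1$ bounds the $k$-th term by $C^k(\beta_NN^{H/2})^{2k}k!\,\|\psi_N^k\|^2_{\mathcal L_K^k}$, and \eqref{eq:psi_summable} gives the first limit. The continuum tail is handled in the same way by \eqref{eq:rho_summable}. If $\beta=0$, every order $k\ge1$ vanishes in $L^2$ and the limit is the constant $1$, so the case is trivial.

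\textbf{Fixed-order Gaussian limit.} For fixed $k\le m$, write the $k$-th Gaussian term as a multiple Wiener-type integral $I_k^N(\psi_N^k)$ with respect to the piecewise-constant Gaussian noise
\[
W_N(dt,dx):=N^{3/4-H/2}\cdots\,\mu(\lceil Nt\rceil,\lceil N^{1/2}x\rceil)\,dt\,dx ,
\]
where the factor is chosen so that its covariance is $\delta$ in time times $N^{r-1/2}\gamma(N^{1/2}(x-y))$ in space. By \eqref{eq:rescaled_cov}, this spatial covariance converges to $K$ and is dominated by $CK$.

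Next replace $\psi_N^k$ by $\tilde\rho_k$. The resulting $L^2$ error is bounded by $C^k\|\psi_N^k-\tilde\rho_k\|^2_{\mathcal L_K^k}\to0$ by Lemma \ref{lem:3.2}. It then remains to show that the vector $\bigl(I_k^N(\tilde\rho_k)\bigr)_{k\le m}$ converges in law to $\bigl(I_k(\tilde\rho_k)\bigr)_{k\le m}$ for the fractional noise $W$.

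To do this, approximate $\tilde\rho_k$ in $\mathcal L_K^k$ by finite linear combinations of symmetrized tensor products of indicators of disjoint rectangles. This is possible because these are dense and the seminorm is dominated by the norm controlled in \eqref{eq:norm_bound_36}. For such elementary kernels, $I_k^N$ is a polynomial in finitely many jointly Gaussian variables $W_N(A_\ell)$, whose covariances converge to those of $W(A_\ell)$ by \eqref{eq:rescaled_cov} and dominated convergence, using the local integrability of $K$. Off-diagonal products converge to the corresponding Wiener integrals. Diagonal contributions disappear in the limit because the rectangles are disjoint in time and the discrete time grid refines. The approximation errors are uniform in $N$ by the isometry bound \eqref{eq:general_bound_35}.

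\textbf{Main obstacle.} The expected main obstacle is this last step. The point is to make sure that uniform-in-$N$ $L^2$ bounds hold for the elementary approximation, using $C^k\|\cdot\|_{\mathcal L_K^k}$ with the dominating kernel $CK$ rather than $\bar\gamma_N$. I would handle it by first extending \eqref{eq:general_bound_35} to signed differences of kernels, splitting them into positive and negative parts, and only then passing to the limit.
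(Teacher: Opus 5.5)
Your skeleton matches the paper's. You control the chaos tail in $L^2$ via Lemma~\ref{lem:3.1}, swap $\bar\omega\to\mu$ at fixed order via Lemma~\ref{lem:gaussian_inv}, get kernel convergence from Lemma~\ref{lem:3.2}, and finish with a four-term triangle inequality on $C_b^{(3)}$ test functions. You differ in how you identify the Gaussian chaos.

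\textbf{Different route in the identification step.} The paper writes $(\beta_NN)^k\int\psi_N^k\mu_N^k$ directly as a multiple integral of $(\sqrt2\beta_NN^{H/2})^k\psi_N^k$ against the limiting noise $W$, and reads off the error from the isometry and Lemma~\ref{lem:3.2}. You instead keep an $N$-dependent noise $W_N$ whose spatial covariance is the rescaled $\gamma$. You pass from $\psi_N^k$ to $\tilde\rho_k$ using domination by $CK$, and prove $I_k^N(\tilde\rho_k)\Rightarrow I_k(\tilde\rho_k)$ through elementary kernels on time-disjoint rectangles.

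Your route is the more careful one. The field $\mu$ has covariance $\gamma$, not a cell average of $K$, so the paper's exact identity needs either a coupling with a covariance-mismatch error term or an argument like yours. The cost is the density and diagonal-removal work. The signed-versus-absolute issue you flag is settled by \eqref{eq:norm_bound_36}. The Hardy--Littlewood bound controls $\int\!\!\int|g|K^{\otimes k}|g|$ by $\int(\int|g|^{1/H}d\mathbf x)^{2H}d\mathbf t$, a norm in which elementary kernels are dense and $\tilde\rho_k$ is finite.

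\textbf{The gap: the constant.} Nothing in your argument produces the $(\sqrt2)^k$ in $(\sqrt2\beta)^kI_k(\tilde\rho_k)$. The covariance you describe for $W_N$ requires the prefactor $N^{1-H/2}$, not $N^{3/4-H/2}$. With that normalization, the $k$-th Gaussian term equals $(\beta_NN^{H/2})^kI_k^N(\psi_N^k)$ exactly. Your argument therefore delivers $1+\sum_k\beta^kI_k(\tilde\rho_k)$, i.e.\ $Z_\beta$, not $Z_{\sqrt2\beta}$.

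The paper does not derive the $\sqrt2$ either; it simply appears in its isometry display, by analogy with the i.i.d.\ case. A direct check at $k=1$ goes against it. We have $\operatorname{Var}\bigl(\beta_N\sum_{i,x}p_N(i,x)\mu(i,x)\bigr)=\beta_N^2\sum_{i\le N}\mathbb E[\gamma(S_i-S_i')]$, where $S'$ is an independent copy and $S_i-S_i'\stackrel{d}{=}S_{2i}$. This tends to $\beta^2\int_0^1\int_{\mathbb R^2}\rho(t,x)\rho(t,y)K(x-y)\,dx\,dy\,dt$, which is half the variance of $\sqrt2\beta I_1(\tilde\rho_1)$.

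In the i.i.d.\ setting the factor $2$ comes from parity: $\sum_xp_N(i,x)^2=\mathbb P(S_{2i}=0)\approx2\rho(2i,0)$. It disappears once the covariance varies on the diffusive scale. For the same reason, the $2^k$ you attribute to the cell volume in the tail bound is not there. The width $2N^{-1/2}$ is absorbed into $\hat p_N^k$, and integrating over matching time cells contributes $N^{-k}$. That is harmless in an upper bound, but it cannot be the source of a $2^k$.

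You need to compute the prefactor explicitly. Done honestly, your route proves convergence to $Z_\beta$, and the $\sqrt2$ in the statement must be reconciled rather than assumed.
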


\begin{proof}
For a fixed truncation level $m \ge 1$, we decompose the modified partition function into the truncated chaos expansion and its corresponding remainder:
$$
\mathfrak{Z}_N(\beta_N; \bar{\omega}) = \mathfrak{Z}_N^{\le m}(\beta_N; \bar{\omega}) + \mathfrak{R}_N^{>m}(\beta_N; \bar{\omega}).
$$
By Lemma \ref{lem:3.1}, there exists a constant $C$ such that for all $N \ge 1$, the second moment of the remainder is uniformly bounded:
\begin{equation}\label{L2summability}
\mathbb{E}_{\mathbb{Q}} \left[ \left| \mathfrak{R}_N^{>m}(\beta_N; \bar{\omega}) \right|^2 \right] \le \sum_{k=m+1}^N (\sqrt{2}\beta_N N^{H/2})^{2k} \frac{C^k \Gamma^k(H)}{\Gamma(kH+1)}.
\end{equation}
Because $H > 0$, the sum of this series converges to zero as $m \to \infty$ uniformly in $N$. Thus, the remainder $\mathfrak{R}_N^{>m}$ vanishes in $L^2(\mathbb{Q})$ as the truncation level increases.

In the Gaussian environment $\mu$, the truncated partition function $\mathfrak{Z}_N^{\le m}(\beta_N; \mu)$ is a multi-linear polynomial of a centered Gaussian field.
$$
\mathfrak{Z}_N^{\le m}(\beta_N; \mu) = 1 + \sum_{k=1}^m (\beta_N N)^{k} \int_{[0,1]^k} \int_{\mathbb{R}^k} \psi_N^k(\mathbf{t}, \mathbf{x}) \mu_N^k(\mathbf{t}, \mathbf{x}) \,\mathrm{d}\mathbf{t} \,\mathrm{d}\mathbf{x}.
$$

We define the corresponding truncated continuous Wiener chaos:
$$
Z_{\sqrt{2}\beta}^{\le m}(1, \cdot) = 1 + \sum_{k=1}^m (\sqrt{2}\beta)^{k} \int_{\Delta_{k}(1)} \int_{\mathbb{R}^k} \tilde{\rho}_{k}(\mathbf{t}, \mathbf{x}) W(\mathrm{d}\mathbf{t}, \mathrm{d}\mathbf{x}).
$$

Adapting the convergence framework for discrete Gaussian polynomial chaos from Lemma 3.5 of \cite{GC23} to our setting, we embed the discrete Gaussian field $\mu$ into a continuous fractional white noise framework. To estimate the $L^2(\mathbb{Q})$-distance between the $k$-th discrete chaos term and its continuous counterpart, we apply the Itô isometry for multiple fractional Wiener integrals on the simplex $\Delta_k(1)$. Adding and subtracting $(\sqrt{2}\beta_N N^{H/2})^k \tilde{\rho}_k$ to separate the scaling error from the kernel approximation error, and using the elementary inequality $\|aX - bY\|^2 \le 2a^2\|X-Y\|^2 + 2(a-b)^2\|Y\|^2$, we obtain:
\begin{align*}
&\mathbb{E}_{\mathbb{Q}} \left[ \left| (\beta_N N)^k \int_{\Delta_k(1)} \int_{\mathbb{R}^k} \psi_N^k \mu_N^k \,\mathrm{d}\mathbf{t} \,\mathrm{d}\mathbf{x} - (\sqrt{2}\beta)^k \int_{\Delta_k(1)} \int_{\mathbb{R}^k} \tilde{\rho}_k \,\mathrm{d}W \right|^2 \right] \\
&= \mathbb{E}_{\mathbb{Q}} \left[ \left| \int_{\Delta_k(1) \times \mathbb{R}^k} \left( (\sqrt{2}\beta_N N^{H/2})^k \psi_N^k - (\sqrt{2}\beta)^k \tilde{\rho}_k \right) \mathrm{d}W \right|^2 \right] \\
&= k! \left\| (\sqrt{2}\beta_N N^{H/2})^k \psi_N^k - (\sqrt{2}\beta)^k \tilde{\rho}_k \right\|_{\mathcal{L}_K^k}^2 \\
&\le 2k! (\sqrt{2}\beta_N N^{H/2})^{2k} \|\psi_N^k - \tilde{\rho}_k\|_{\mathcal{L}_K^k}^2 + 2k! \left( (\sqrt{2}\beta_N N^{H/2})^k - (\sqrt{2}\beta)^k \right)^2 \|\tilde{\rho}_k\|_{\mathcal{L}_K^k}^2.
\end{align*}

Since $\beta_N N^{H/2} \to \beta$ as $N \to \infty$, the coefficient $\left( (\sqrt{2}\beta_N N^{H/2})^k - (\sqrt{2}\beta)^k \right)^2$ vanishes. Furthermore, Lemma \ref{lem:3.1} ensures $\|\tilde{\rho}_k\|_{\mathcal{L}_K^k}^2$ is finite. Thus, the second term is a higher-order infinitesimal. We can extract a constant $C(k, \beta)$ such that the total error is asymptotically dominated by the first term:
$$
\le C(k, \beta) \|\psi_N^k - \tilde{\rho}_k\|_{\mathcal{L}_K^k}^2.
$$

By Lemma \ref{lem:3.2}, $\|\psi_N^k - \tilde{\rho}_k\|_{\mathcal{L}_K^k} \to 0$ as $N \to \infty$. Consequently, each discrete term converges in $L^2(\mathbb{Q})$ to its corresponding multiple stochastic integral, which implies convergence in distribution for any fixed $m \ge 1$:
\begin{equation}\label{truncated Gaussian to Wiener}
\mathfrak{Z}_N^{\le m}(\beta_N; \mu) \xrightarrow{d} Z_{\sqrt{2}\beta}^{\le m}(1, \cdot) \quad \text{as } N \to \infty.
\end{equation}

Finally, for any test function $f \in C_b^{(3)}(\mathbb{R})$ with $C_f = \max\{\|f\|_\infty,\|f'\|_\infty, \|f''\|_\infty, \|f'''\|_\infty\}$, the triangle inequality yields:
\begin{align*}
\left| \mathbb{E}_{\mathbb{Q}}[f(\mathfrak{Z}_N(\beta_N; \bar{\omega}))] - \mathbb{E}_{\mathbb{Q}}[f(Z_{\sqrt{2}\beta})] \right| &\le \left| \mathbb{E}_{\mathbb{Q}}[f(\mathfrak{Z}_N(\beta_N; \bar{\omega}))] - \mathbb{E}_{\mathbb{Q}}[f(\mathfrak{Z}_N^{\le m}(\beta_N; \bar{\omega}))] \right| \\
&\quad + \left| \mathbb{E}_{\mathbb{Q}}[f(\mathfrak{Z}_N^{\le m}(\beta_N; \bar{\omega}))] - \mathbb{E}_{\mathbb{Q}}[f(\mathfrak{Z}_N^{\le m}(\beta_N; \mu))] \right| \\
&\quad + \left| \mathbb{E}_{\mathbb{Q}}[f(\mathfrak{Z}_N^{\le m}(\beta_N; \mu))] - \mathbb{E}_{\mathbb{Q}}[f(Z_{\sqrt{2}\beta}^{\le m})] \right| \\
&\quad + \left| \mathbb{E}_{\mathbb{Q}}[f(Z_{\sqrt{2}\beta}^{\le m})] - \mathbb{E}_{\mathbb{Q}}[f(Z_{\sqrt{2}\beta})] \right|. 
\end{align*}

We analyze the four terms as $N \to \infty$ and then $m \to \infty$:
First, by the $L^2$-summability established in \eqref{L2summability}, the first term is bounded by $C_f ( \mathbb{E}_{\mathbb{Q}}[| \mathfrak{R}_N^{>m}(\beta_N; \bar{\omega}) |^2] )^{1/2}$, which vanishes as $m \to \infty$ uniformly in $N$.
Second, Lemma \ref{lem:gaussian_inv} guarantees the second term converges to $0$ as $N \to \infty$.
Third, \eqref{truncated Gaussian to Wiener} above ensures the third term converges to $0$ as $N \to \infty$.
Finally, the fourth term vanishes as $m \to \infty$ since the continuous Wiener chaos $Z_{\sqrt{2}\beta}(1, \cdot)$ is rigorously constructed as the $L^2$-limit of its finite truncations $Z_{\sqrt{2}\beta}^{\le m}$. So we complete the proof.
\end{proof}

\section{Proof of Theorem 1.3}
To complete the final step of the environment transition, we need to estimate the $L^2$-error induced by replacing the non-linear environment variable $\hat{\omega}(i,x)$ with the linear truncated environment $\bar{\omega}(i,x)$. We define the local environment difference as:
\begin{equation}\label{eq:env_diff_def}
\Delta \hat{\omega}(i, x) := \hat{\omega}(i, x) - \bar{\omega}(i, x) = \frac{e^{\beta_N \bar{\omega}(i, x) - \bar{\lambda}(\beta_N)} - 1}{\beta_N} - \bar{\omega}(i, x).
\end{equation}
We establish the following upper bounds for the environment covariances.

\begin{lemma}\label{lemma:env_covariance_bounds}
Assume that conditions \textbf{(A)} and \textbf{(B)} hold, with
\(\alpha>2\).  Recall \( \beta_N \) and \(k_N\) in \eqref{beta_N} and \eqref{k_N}, and fix $\theta\in(2,\alpha\wedge4)$.
Define
\[
        \Delta\hat\omega(i,x):=\hat\omega(i,x)-\bar\omega(i,x),
        \qquad
        \hat\omega(i,x)
        :=
        \frac{
        e^{\beta_N\bar\omega(i,x)-\bar\lambda(\beta_N)}-1}
        {\beta_N}.
\]
Then there exists a sequence \(\varepsilon_N\downarrow0\) such
that, uniformly in \(i\) and \(x,z\in\mathbb Z\),
\begin{equation}\label{eq:cov_hat_omega}
        \left|
        \mathbb E_{\mathbb Q}
        [\hat\omega(i,x)\hat\omega(i,z)]
        \right|
        \le C\gamma(x-z),
\end{equation}
and
\begin{equation}\label{eq:cov_delta_omega}
        \left|
        \mathbb E_{\mathbb Q}
        [\Delta\hat\omega(i,x)\Delta\hat\omega(i,z)]
        \right|
        \le \varepsilon_N\gamma(x-z).
\end{equation}
More precisely, one may take
\[
        \varepsilon_N=C(\beta_N^{\theta-2}+\beta_N^2).
\]
\end{lemma}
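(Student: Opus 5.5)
The natural route is to work with the joint cumulant generating function of the pair \((\bar\omega(i,x),\bar\omega(i,z))\), which factorizes over the underlying sites \(y\).  Since \(\mathbb E_{\mathbb Q}[e^{\beta_N\bar\omega(i,x)-\bar\lambda(\beta_N)}]=1\), we have \(\mathbb E_{\mathbb Q}[\hat\omega(i,x)]=0\) and
\[
\beta_N^2\,\mathbb E_{\mathbb Q}[\hat\omega(i,x)\hat\omega(i,z)]
=\frac{\mathbb E_{\mathbb Q}\bigl[e^{\beta_N(\bar\omega(i,x)+\bar\omega(i,z))}\bigr]}{\mathbb E_{\mathbb Q}[e^{\beta_N\bar\omega(i,x)}]\,\mathbb E_{\mathbb Q}[e^{\beta_N\bar\omega(i,z)}]}-1
=\exp\Bigl\{\sum_{y}\Phi_N(\psi_{y-x},\psi_{y-z})\Bigr\}-1,
\]
where \(\Phi_N(a,b):=\log\mathbb E e^{\beta_N(a+b)\bar\xi}-\log\mathbb E e^{\beta_N a\bar\xi}-\log\mathbb E e^{\beta_N b\bar\xi}\).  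Writing \(g(u):=\log\mathbb E e^{u\bar\xi}\), which is smooth with \(g(0)=g'(0)=0\) and \(g''(0)=\sigma_N^2\), one has \(\Phi_N(a,b)=\beta_N^2ab\int_0^1\!\int_0^1 g''(\beta_N(sa+tb))\,ds\,dt\).  Because \(|\beta_N k_N\psi|\) is bounded (by a power of \(\log N\) in the heavy regime, which I would need to absorb via \(\psi_y\to0\) and the tilted-moment argument as in Lemma~\ref{lem:one-site-clipped-section2}), \(g''\) is uniformly bounded on the relevant range.  This gives \(|\Phi_N(a,b)|\le C\beta_N^2ab\), hence \(\sum_y|\Phi_N|\le C\beta_N^2\gamma(x-z)\le C\beta_N^2\gamma(0)\to0\).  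Since \(|e^u-1|\le C|u|\) for bounded \(u\), this yields \eqref{eq:cov_hat_omega}.

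For \eqref{eq:cov_delta_omega}, expand
\[
\mathbb E[\Delta\hat\omega_x\Delta\hat\omega_z]=\mathbb E[\hat\omega_x\hat\omega_z]-\mathbb E[\hat\omega_x\bar\omega_z]-\mathbb E[\bar\omega_x\hat\omega_z]+\sigma_N^2\gamma(x-z),
\]
and compute each term exactly.  The mixed term is
\[
\beta_N\,\mathbb E[\hat\omega_x\bar\omega_z]=\mathbb E\bigl[e^{\beta_N\bar\omega_x-\bar\lambda}\bar\omega_z\bigr]=\sum_y\psi_{y-z}\,g'(\beta_N\psi_{y-x}),
\]
by factorization (only the site-\(y\) factor is tilted).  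Taylor expansion gives \(g'(u)=\sigma_N^2u+\tfrac12 g'''(0)u^2+O(|u|^{\theta-1})\), the remainder of fractional order being controlled by \(\mathbb E|\bar\xi|^\theta<\infty\) uniformly, as in Lemma~\ref{lem:cgf_expansion}.  So \(\mathbb E[\hat\omega_x\bar\omega_z]=\sigma_N^2\gamma(x-z)+O(\beta_N)\sum_y\psi_{y-z}\psi_{y-x}^2+O(\beta_N^{\theta-2})\sum_y\psi_{y-z}\psi_{y-x}^{\theta-1}\).  Similarly, \(\mathbb E[\hat\omega_x\hat\omega_z]=\beta_N^{-2}(e^{\Sigma}-1)\) with \(\Sigma=\sigma_N^2\beta_N^2\gamma(x-z)+\text{(higher)}\), so \(\beta_N^{-2}(e^\Sigma-1)=\sigma_N^2\gamma+O(\beta_N^2\gamma^2)+\text{(same higher terms)}\).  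In the combination the leading \(\sigma_N^2\gamma\) terms cancel exactly (coefficients \(1-1-1+1\)).  The cubic terms \(\pm\tfrac12 g'''(0)\beta_N\sum_y\psi_{y-x}\psi_{y-z}(\psi_{y-x}+\psi_{y-z})\) also cancel between the \(\hat\omega\hat\omega\) expansion and the two mixed terms, which is a point to verify carefully; if they do not cancel, the bound \(\beta_N\) still suffices since \(\beta_N\le\beta_N^{\theta-2}\) for \(\theta<3\).  What remains is \(O(\beta_N^{\theta-2}+\beta_N^2)\) times sums of the form \(\sum_y\psi_{y-z}\psi_{y-x}^{\theta-1}\).

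The main obstacle is comparing these remainder sums with \(\gamma(x-z)\) uniformly in \(x-z\).  Since \(\psi\) is bounded and \(\theta-1>1\), we have \(\psi^{\theta-1}\le C\psi\), so \(\sum_y\psi_{y-z}\psi_{y-x}^{\theta-1}\le C\gamma(x-z)\).  Likewise \(\gamma^2\le\gamma(0)\gamma\).  The genuinely delicate part is the uniform control of \(g'',g'''\) and the fractional Taylor remainder when \(\beta_N k_N\) is only logarithmically bounded in the regime \(2<\alpha\le 3/H\).  There I would split \(\bar\xi\) at level \(\beta_N^{-1}\), use \(\psi_y\to0\) together with the hypotheses of Lemma~\ref{lem:cgf_expansion}, and bound \(\mathbb E[|u\bar\xi|^\theta e^{|u\bar\xi|}]\le C|u|^\theta\).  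Collecting terms gives \(\varepsilon_N=C(\beta_N^{\theta-2}+\beta_N^2)\).
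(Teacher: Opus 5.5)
Your proposal is correct. For \eqref{eq:cov_hat_omega} it is the paper's argument: the paper also writes the one-site factor as $\mathbb E_{\mathbb Q}[U_sU_t]=\exp\{\ell_N(s+t)-\ell_N(s)-\ell_N(t)\}-1$, with $\ell_N(s)=\log\mathbb E_{\mathbb Q}[e^{s\bar\xi}]$ (your $g$), and bounds the exponent by $C|st|$ via $\sup|\ell_N''|$.

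For \eqref{eq:cov_delta_omega} your bookkeeping differs from the paper's. The paper writes $\beta_N\Delta\hat\omega(i,x)=\sum_yV_{s^x_y}+\sum_{|I|\ge2}\prod_{y\in I}U_{s^x_y}$ with $V_s=U_s-s\bar\xi$. The two pieces are orthogonal, and the degree-$\ge2$ part contributes at most $C\beta_N^2\gamma(x-z)^2$. The linear part is handled by Cauchy--Schwarz, $|\mathbb E_{\mathbb Q}[V_sV_t]|\le C|s|^{\theta/2}|t|^{\theta/2}\le C\beta_N^{\theta-2}|st|$, so no cumulant ever has to be identified or cancelled.

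Your four-term expansion computes the same quantity: site by site, $\mathbb E[V_sV_t]=\mathbb E[U_sU_t]-tg'(s)-sg'(t)+st\sigma_N^2$. You evaluate it by explicit Taylor expansion instead of Cauchy--Schwarz. The cancellation you left open does hold. With $\kappa_3=g'''(0)$, $B_x=\sum_y\psi_{y-x}^2\psi_{y-z}$ and $B_z=\sum_y\psi_{y-x}\psi_{y-z}^2$, the cubic contributions of your four terms are $\tfrac{\kappa_3}{2}\beta_N(B_x+B_z)$, $\tfrac{\kappa_3}{2}\beta_NB_x$, $\tfrac{\kappa_3}{2}\beta_NB_z$ and $0$, so they cancel exactly. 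You do need this, because your fallback $\beta_N\le\beta_N^{\theta-2}$ is false for $\theta\in(3,4)$. Your route gives explicit asymptotics for each covariance. The paper's route is shorter and is uniform over $\theta\in(2,\alpha\wedge4)$, using only $|e^u-1-u|\le C|u|^{\theta/2}e^{u_+}$.

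Three points need care in your version.

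First, take the remainder in $\Sigma$ from your representation $\Phi_N(a,b)=\beta_N^2ab\int_0^1\!\int_0^1g''(\beta_N(sa+tb))\,ds\,dt$, expanding $g''$ inside the integral. Then each site's error carries the factor $\psi_{y-x}\psi_{y-z}$. If you instead expand $g$ separately at $\beta_N(a+b)$, $\beta_Na$ and $\beta_Nb$, you are left with $\beta_N^{\theta}\sum_y(\psi_{y-x}^\theta+\psi_{y-z}^\theta)$. That does not decay in $|x-z|$ and is not $O(\gamma(x-z))$.

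Second, for $\theta\le3$ simply drop the cubic term, since $\mathbb E_{\mathbb Q}[\bar\xi^3]$ need not stay bounded when $\alpha\le3$.

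Third, in the regime $2<\alpha\le3/H$ the quantity $\beta_Nk_N\psi_{y-x}$ is not bounded at near sites. Neither $\psi_y\to0$ nor Lemma~\ref{lem:one-site-clipped-section2} helps there; that lemma's hypothesis $2bk_N\le1$ fails at those sites. What controls $g''$, $g'''$ and all your remainders is the paper's bound $\sup_{|u|\le C_0\beta_N}\mathbb E_{\mathbb Q}[|\bar\xi|^\theta e^{|u||\bar\xi|}]\le C$. It is proved by splitting at $\beta_N^{-1}$: the tail mass beyond $\beta_N^{-1}$ costs $\beta_N^{\alpha-\theta-\rho}=N^{-c+o(1)}$, while $e^{C_0\beta_Nk_N}=N^{o(1)}$ because $\eta<\alpha$. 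Your last paragraph points to exactly this bound, and it is enough for every Taylor remainder you use.
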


\begin{proof}
Throughout the proof \(C\) may change from line to line, but it is independent
of \(N,i,x,z\).  Put
\[
        X:=\bar\xi(i,0).
\]
For every fixed \(C_0<\infty\), we claim
\begin{equation}\label{eq:local-exp-moment-heavy}
        \sup_{|u|\le C_0\beta_N}
        \mathbb E_{\mathbb Q}
        \left[
        |X|^\theta e^{|u||X|}
        \right]\le C .
\end{equation}
Indeed, \(|X|\le |\xi|\mathbf 1_{\{|\xi|\le k_N\}}+|c_N|\), where
\[
        c_N:=\mathbb E_{\mathbb Q}
        [\xi\mathbf 1_{\{|\xi|\le k_N\}}],
\]
and \(\sup_N |c_N|<\infty\).  Hence it is enough to bound the corresponding
quantity with \(|\xi|\mathbf 1_{\{|\xi|\le k_N\}}\).  On
\(\{|\xi|\le\beta_N^{-1}\}\), \eqref{eq:local-exp-moment-heavy} is bounded by a constant because
\(\theta<\alpha\).  On
\(\{\beta_N^{-1}<|\xi|\le k_N\}\), by discretization, for a small
\(\rho>0\) with \(\theta+\rho<\alpha\),
\[
\begin{aligned}
&\mathbb E_{\mathbb Q}
\left[
|\xi|^\theta e^{C_0\beta_N|\xi|}
\mathbf 1_{\{\beta_N^{-1}<|\xi|\le k_N\}}
\right]                                                     \\
&\quad\le
C
\sum_{1\le m\le C\beta_N k_N}
e^{C_0(m+1)}
\left(\frac{m+1}{\beta_N}\right)^\theta
\mathbb Q\left(|\xi|>\frac{m}{\beta_N}\right)                \\
&\quad\le
C\beta_N^{\alpha-\theta-\rho}
\sum_{1\le m\le C\beta_N k_N}
e^{C_0(m+1)}m^{\theta-\alpha+\rho}.
\end{aligned}
\]
If \(k_N=\beta_N^{-1}\), then \(\beta_Nk_N=1\), and the last display is
bounded.  In the truncation,
\[
        \beta_Nk_N
        =
        \frac{l(N^{3/2}(\log N)^\eta)}{l(N^{3/2})}
        =
        (\log N)^{\eta/\alpha+o(1)}.
\]
Since \(\eta<\alpha\), the exponential factor in the last display is
\(N^{o(1)}\), whereas
\(\beta_N^{\alpha-\theta-\rho}=N^{-c+o(1)}\) for some \(c>0\).
This proves \eqref{eq:local-exp-moment-heavy}.

For \(s\in\mathbb R\), define
\[
        m_N(s):=\mathbb E_{\mathbb Q}[e^{sX}],
        \qquad
        U_s:=\frac{e^{sX}}{m_N(s)}-1,
        \qquad
        V_s:=U_s-sX .
\]
The variables \(U_s\) and \(V_s\) are centered.  We shall use the following estimates: whenever \(|s|,|t|\le C_0\beta_N\),
\begin{equation}\label{eq:one-site-U-bound}
        \left|\mathbb E_{\mathbb Q}[U_sU_t]\right|
        \le C|st|,
\end{equation}
and
\begin{equation}\label{eq:one-site-V-bound}
        \left|\mathbb E_{\mathbb Q}[V_sV_t]\right|
        \le C\beta_N^{\theta-2}|st|.
\end{equation}
To prove \eqref{eq:one-site-U-bound}, let
\[
        \ell_N(s):=\log m_N(s).
\]
By \eqref{eq:local-exp-moment-heavy} with \(\theta>2\),
\[
        \sup_{|u|\le C_0\beta_N} |\ell_N''(u)|\le C.
\]
Therefore
\[
\begin{aligned}
\left|
\ell_N(s+t)-\ell_N(s)-\ell_N(t)
\right|
&=
\left|
\int_0^s\int_0^t \ell_N''(u+v)\,dv\,du
\right|                                                     \\
&\le C|st|.
\end{aligned}
\]
Since
\[
        \mathbb E_{\mathbb Q}[U_sU_t]
        =
        \frac{m_N(s+t)}{m_N(s)m_N(t)}-1
        =
        \exp\{\ell_N(s+t)-\ell_N(s)-\ell_N(t)\}-1,
\]
\eqref{eq:one-site-U-bound} follows.

For \eqref{eq:one-site-V-bound}, use the Taylor estimate
\[
        |e^u-1-u|\le C|u|^{\theta/2}e^{u_+},
        \qquad 2<\theta\le4.
\]
Since \(\mathbb E_{\mathbb Q}[X]=0\), \eqref{eq:local-exp-moment-heavy}
implies
\[
        |m_N(s)-1|
        \le C|s|^{\theta/2},
        \qquad |s|\le C_0\beta_N,
\]
 hence \(m_N(s)\) is bounded above and below by positive constants for all
large \(N\).  Writing
\[
        V_s
        =
        \frac{e^{sX}-1-sX}{m_N(s)}
        -
        \frac{m_N(s)-1}{m_N(s)}(1+sX),
\]
we get, again by \eqref{eq:local-exp-moment-heavy},
\[
        \mathbb E_{\mathbb Q}[V_s^2]\le C|s|^\theta .
\]
Consequently, by Cauchy's inequality,
\[
        |\mathbb E_{\mathbb Q}[V_sV_t]|
        \le C|s|^{\theta/2}|t|^{\theta/2}
        \le C\beta_N^{\theta-2}|st|,
\]
because \(|s|,|t|\le C_0\beta_N\).  This proves
\eqref{eq:one-site-V-bound}.

We now pass to the moving-average environment.  For
\(x\in\mathbb Z\), put
\[
        s_y^x:=\beta_N\psi_{y-x}.
\]
Since \(\sup_y\psi_y<\infty\), \(|s_y^x|\le C\beta_N\). The following computations
 are first performed with \(y\) restricted to a finite set.  The bounds are
uniform in the finite set, and the infinite-volume identities follow by
\(L^2(\mathbb Q)\)-convergence, because
\[
        \sum_y \mathbb E_{\mathbb Q}[(U_{s_y^x})^2]
        \le C\beta_N^2\sum_y\psi_{y-x}^2<\infty .
\]
By independence of the variables \(\{\xi(i,y)\}_{y\in\mathbb Z}\) and \eqref{cor:variance_estimate}
\[
        e^{\beta_N\bar\omega(i,x)-\bar\lambda(\beta_N)}
        =
        \prod_{y\in\mathbb Z}(1+U_{s_y^x})
\]
in \(L^2(\mathbb Q)\), where $U_{s_y^x}$ depends only on $\bar{\xi}(i,y)$.  Thus
\[
        \beta_N\hat\omega(i,x)
        =
        \sum_{\emptyset\ne I\subset\mathbb Z}
        \prod_{y\in I}U_{s_y^x},
\]
and, since
\[
        \beta_N\bar\omega(i,x)=\sum_y s_y^x X_y,
        \qquad X_y:=\bar\xi(i,y),
\]
we also have, recalling $\Delta\hat\omega(i,x):=\hat\omega(i,x)-\bar\omega(i,x)$, 
\[
        \beta_N\Delta\hat\omega(i,x)
        =
        \sum_y V_{s_y^x}
        +
        \sum_{\substack{\emptyset\ne I\subset\mathbb Z\\ |I|\ge2}}
        \prod_{y\in I}U_{s_y^x}.
\]
Here \(I\subset\mathbb Z\) means that \(I\) is finite.

We first prove the covariance bound for \(\hat\omega\).  Expanding gives
\[
\begin{aligned}
&\beta_N^2
\mathbb E_{\mathbb Q}
[\hat\omega(i,x)\hat\omega(i,z)]                         \\
&\quad =
\sum_{\emptyset\ne I,J\subset\mathbb Z}
\mathbb E_{\mathbb Q}
\left[
\prod_{y\in I}U_{s_y^x}
\prod_{y\in J}U_{s_y^z}
\right].
\end{aligned}
\]
 Factors with different \(y\)-indices are independent and each
\(U_{s_y^x}\) is centered.  When \(I\ne J\), it is zero. Hence, only the
terms with \(I=J\) remain:
\[
        \beta_N^2
        \mathbb E_{\mathbb Q}
        [\hat\omega(i,x)\hat\omega(i,z)]
        =
        \sum_{\emptyset\ne I\subset\mathbb Z}
        \prod_{y\in I}
        \mathbb E_{\mathbb Q}
        [U_{s_y^x}U_{s_y^z}] .
\]
Taking absolute values and using \eqref{eq:one-site-U-bound},
\[
\begin{aligned}
&\beta_N^2
\left|
\mathbb E_{\mathbb Q}
[\hat\omega(i,x)\hat\omega(i,z)]
\right|                                                     \\
&\quad\le
\sum_{\emptyset\ne I\subset\mathbb Z}
\prod_{y\in I}
C\beta_N^2\psi_{y-x}\psi_{y-z}                              \\
&\quad=
\prod_y
\left(
1+
C\beta_N^2\psi_{y-x}\psi_{y-z}
\right)-1                                                   \\
&\quad\le
C\beta_N^2
\sum_y\psi_{y-x}\psi_{y-z}
=
C\beta_N^2\gamma(x-z),
\end{aligned}
\]
where the last inequality follows from
\[
        \prod_y(1+a_y)-1
        \le
        \exp\!\left\{\sum_y a_y\right\}-1
        \le
        C\sum_y a_y
\]
with \(a_y=C\beta_N^2\psi_{y-x}\psi_{y-z}\) and \(\sum_y a_y\le C\beta_N^2\gamma(0)\to0\), uniformly in \(x,z\).
Dividing by \(\beta_N^2\) gives \eqref{eq:cov_hat_omega}.

For the difference, by
\eqref{eq:one-site-V-bound},
\[
\begin{aligned}
&\beta_N^{-2}
\sum_y
\left|
\mathbb E_{\mathbb Q}
[V_{s_y^x}V_{s_y^z}]
\right|                                                     \\
&\quad\le
C\beta_N^{-2}\beta_N^{\theta-2}
\sum_y |s_y^x s_y^z|
=
C\beta_N^{\theta-2}\gamma(x-z).
\end{aligned}
\]
The terms of degree at least two give, by \eqref{eq:one-site-U-bound},
\[
\begin{aligned}
&\beta_N^{-2}
\sum_{\substack{I\subset\mathbb Z\\ |I|\ge2}}
\prod_{y\in I}
C\beta_N^2\psi_{y-x}\psi_{y-z}                              \\
&\quad\le
C\beta_N^{-2}
\left[
\exp\{C\beta_N^2\gamma(x-z)\}
-1-C\beta_N^2\gamma(x-z)
\right]                                                     \\
&\quad\le
C\beta_N^2\gamma(x-z)^2
\le
C\beta_N^2\gamma(0)\gamma(x-z).
\end{aligned}
\]
Combining the last two estimates gives
\[
        \left|
        \mathbb E_{\mathbb Q}
        [\Delta\hat\omega(i,x)\Delta\hat\omega(i,z)]
        \right|
        \le
        C(\beta_N^{\theta-2}+\beta_N^2)\gamma(x-z).
\]
Since \(\theta>2\) and \(\beta_N\to0\), the right-hand side has the form
\(\varepsilon_N\gamma(x-z)\) with \(\varepsilon_N\downarrow0\).  This proves
\eqref{eq:cov_delta_omega}.
\end{proof}

With the covariance bounds established in Lemma \ref{lemma:env_covariance_bounds}, we are now ready to evaluate the $L^2(\mathbb{Q})$-distance between the truncated normalized partition function $Z_N(\beta_N; \bar{\omega})e^{-N\bar{\lambda}(\beta_N)}$ and the modified partition function $\mathfrak{Z}_N(\beta_N; \bar{\omega})$.

Let us express the truncated normalized partition function via its discrete chaos expansion:
$$
Z_N(\beta_N; \bar{\omega})e^{-N\bar{\lambda}(\beta_N)} = 1 + \sum_{k=1}^N \hat{S}_k^N, \quad \text{where} \quad \hat{S}_k^N := \beta_N^k \sum_{(\mathbf{t}, \mathbf{x}) \in \Delta_k^{(N)} \times \mathcal{X}_N^k} p_N^k(\mathbf{t}, \mathbf{x}) \hat{\omega}_N^k(\mathbf{t}, \mathbf{x}).
$$
Similarly, the modified partition function is expanded as $\mathfrak{Z}_N(\beta_N; \bar{\omega}) = 1 + \sum_{k=1}^N \bar{S}_k^N$, where $\bar{S}_k^N$ is defined analogously by replacing $\hat{\omega}_N^k$ with $\bar{\omega}_N^k$.

First, for a fixed truncation level $m \ge 1$, we consider the tail sum  $\sum_{k=m+1}^N \hat{S}_k^N$. By the orthogonality of the discrete chaos expansion in different time steps, the $L^2(\mathbb{Q})$-norm of the tail is the sum of the variances of each order $k > m$:
$$
\mathbb{E}_{\mathbb{Q}} \left[ \left( \sum_{k=m+1}^N \hat{S}_k^N \right)^2 \right] = \sum_{k=m+1}^N \mathbb{E}_{\mathbb{Q}}[(\hat{S}_k^N)^2].
$$

By Lemma \ref{lemma:env_covariance_bounds}, the covariance of the $\hat{\omega}$ is bounded by the original covariance $\gamma$, i.e., $\mathbb{E}_{\mathbb{Q}}[\hat{\omega}(i,x)\hat{\omega}(i,z)] \le C\gamma(x-z)$. Following the rescaling $x \to N^{1/2}x$ and the discrete-to-continuous transition framework developed in Section 3, we bound the $k$-th order variance as:
$$
\mathbb{E}_{\mathbb{Q}}[(\hat{S}_k^N)^2] \le C_1^k \beta_N^{2k} N^k N^{k(H-1)} \int_{\Delta_k(1)} \int_{\mathbb{R}^{2k}} \hat{p}_N^k(\mathbf{t}, \mathbf{x}) \hat{p}_N^k(\mathbf{t}, \mathbf{z}) \prod_{i=1}^k K(x_i - z_i) \,\mathrm{d}\mathbf{x} \,\mathrm{d}\mathbf{z} \,\mathrm{d}\mathbf{t},
$$
where $K(z) = H(2H-1)|z|^{2H-2}$ is the fractional kernel.

Due to Lemma \ref{lem:3.1}, the integrals are uniformly controlled by
$$
\mathbb{E}_{\mathbb{Q}}[(\hat{S}_k^N)^2] \le (C_2 \beta_N N^{H/2})^{2k} \int_{\Delta_k(1)} \prod_{i=1}^k (t_i - t_{i-1})^{H-1} \,\mathrm{d}\mathbf{t} \le (C_2 \beta_N N^{H/2})^{2k} \frac{\Gamma^k(H)}{\Gamma(kH + 1)}.
$$
Given the scaling $\beta_N N^{H/2} \to \beta$, there exists a universal constant $A > 0$ such that for sufficiently large $N$, the tail variance is bounded by:
$$
\mathbb{E}_{\mathbb{Q}} \left[ \left( \sum_{k=m+1}^N \hat{S}_k^N \right)^2 \right] \le \sum_{k=m+1}^\infty \frac{A^{2k} \Gamma^k(H)}{\Gamma(kH + 1)}.
$$

Since $H > 0$, the tail remainder vanishes as the truncation level increases
$$
\lim_{m \to \infty} \limsup_{N \to \infty} \mathbb{E}_{\mathbb{Q}} \left[ \left( \sum_{k=m+1}^N \hat{S}_k^N \right)^2 \right] = 0.
$$
This justifies that the higher-order terms of the normalized chaos expansion are uniformly negligible in $L^2(\mathbb{Q})$.

Therefore, it suffices to show that for any fixed $1 \le k \le m$, the difference of the $k$-th chaos terms vanishes:
\begin{equation}\label{eq:limit_difference_k}
\lim_{N\to\infty} \mathbb{E}_{\mathbb{Q}}[(\hat{S}_k^N - \bar{S}_k^N)^2] = 0.
\end{equation}

To prove \eqref{eq:limit_difference_k}, we employ a telescoping sum to replace the environment variables one coordinate at a time. For $1 \le j \le k$, we define the intermediate difference terms by
$$
S_{k,j}^N := \beta_N^k \sum_{(\mathbf{t}, \mathbf{x}) \in \Delta_k^{(N)} \times \mathcal{X}_N^k} p_N^k(\mathbf{t}, \mathbf{x}) \left( \prod_{i=1}^{j-1} \hat{\omega}(t_i, x_i) \right) \Delta\hat{\omega}(t_j, x_j) \left( \prod_{i=j+1}^k \bar{\omega}(t_i, x_i) \right).
$$
Since $\hat{S}_k^N - \bar{S}_k^N = \sum_{j=1}^k S_{k,j}^N$, by Minkowski's inequality, it suffices to prove that $\|S_{k,j}^N\|_2 \to 0$ for each $j$.

Recalling that $\mathbb{E}_{\mathbb{Q}}[\bar{\omega}\bar{\omega}] \le 2\gamma$, $\mathbb{E}_{\mathbb{Q}}[\hat{\omega}\hat{\omega}] \le C\gamma$, and $\mathbb{E}_{\mathbb{Q}}[\Delta\hat{\omega}\Delta\hat{\omega}] \le C\beta_N^{\theta-2}\gamma$, we obtain
\begin{align*}
\mathbb{E}_{\mathbb{Q}}[(S_{k,j}^N)^2] &\le \beta_N^{2k} \sum_{\mathbf{t} \in \Delta_k^{(N)}} \sum_{\mathbf{x}, \mathbf{z} \in \mathcal{X}_N^k} p_N^k(\mathbf{t}, \mathbf{x}) p_N^k(\mathbf{t}, \mathbf{z}) \\
&\quad \times C^k \left( \prod_{i=1}^{j-1} \gamma(x_i - z_i) \right) \big(\mathcal{O}(\beta_N^{\theta-2})\gamma\big) \left( \prod_{i=j+1}^k \gamma(x_i - z_i) \right).
\end{align*}

To bound this discrete sum, we convert it into an integral over the continuous simplex $\Delta_k(1) \times \mathbb{R}^{2k}$. We substitute $\hat{p}_N^k(\mathbf{t}, \mathbf{x}) = (N^{1/2}/2)^k p_N^k(\mathbf{t}, \mathbf{x})$. For the covariance, we utilize the scaling relation $\gamma(N^{1/2}(x_i - z_i)) \le C N^{H-1} K(x_i - z_i)$.
We obtain
\begin{align*}
\mathbb{E}_{\mathbb{Q}}[(S_{k,j}^N)^2] &\le C_1^k N^k \beta_N^{2k} \mathcal{O}(\beta_N^{\theta-2}) N^{k(H-1)} \int_{\Delta_k(1)} \int_{\mathbb{R}^{2k}} \hat{p}_N^k(\mathbf{t}, \mathbf{x}) \hat{p}_N^k(\mathbf{t}, \mathbf{z}) \\
&\quad \times \left( \prod_{i=1}^k \left[ N^{H-1} K(x_i - z_i) \right] \right) \,\mathrm{d}\mathbf{x} \,\mathrm{d}\mathbf{z} \,\mathrm{d}\mathbf{t} \\
&\le C_2^k N^k \beta_N^{2k+\theta-2} N^{k(H-1)} \int_{\Delta_k(1)} \prod_{i=1}^k (t_i - t_{i-1})^{H-1} \,\mathrm{d}\mathbf{t}.
\end{align*}
The integral $\int_{\Delta_k(1)} \prod (t_i - t_{i-1})^{H-1} \,\mathrm{d}\mathbf{t}$ is bounded by $\frac{\Gamma^{k}(H)}{\Gamma(kH + 1)}$, which is a finite constant independent of $N$. Substituting the scaling $\beta_N = \beta N^{-H/2}$, we have the convergence rate
\begin{align*}
N^k (\beta N^{-H/2})^{2k+\theta-2} N^{kH-k} &= \beta^{2k+\theta-2} N^k N^{-kH - \frac{\theta-2}{2}H} N^{kH-k} \\
&= \beta^{2k+\theta-2} N^{- \frac{\theta-2}{2}H}.
\end{align*}
Recall from Lemma \ref{lemma:env_covariance_bounds} that we chose $\theta > 2$. Consequently, the variance is bounded by a strictly decaying power of $N$:
$$
\mathbb{E}_{\mathbb{Q}}[(S_{k,j}^N)^2] \le \mathcal{O}\big(N^{- \frac{\theta-2}{2}H}\big) \xrightarrow{N \to \infty} 0.
$$
Therefore, $\|\hat{S}_k^N - \bar{S}_k^N\|_2 \to 0$, confirming that $Z_N(\beta_N; \bar{\omega})e^{-N\bar{\lambda}(\beta_N)}$ and $\mathfrak{Z}_N(\beta_N; \bar{\omega})$ have the identical asymptotic distribution. With the $L^2$-equivalence established, we know that the truncated normalized partition function converges in distribution to the continuous Wiener chaos:
$$
Z_N(\beta_N; \bar{\omega})e^{-N\bar{\lambda}(\beta_N)} \xrightarrow{d} Z_{\sqrt{2}\beta}(1,\cdot).
$$

To apply Proposition \ref{truncated-error}, we must verify the following positivity condition:
\begin{equation}\label{eq:prop22_condition}
\lim_{\delta \downarrow 0} \limsup_{N\to\infty} \mathbb{Q}\big(Z_N(\beta_N; \bar{\omega}) < \delta \mathbb{E}_{\mathbb{Q}}[Z_N(\beta_N; \bar{\omega})]\big) = 0.
\end{equation}
By the Portmanteau Theorem for weak convergence, for any fixed $\delta > 0$, the discrete probability is bounded by the distribution of the continuous limit:
$$
\limsup_{N\to\infty} \mathbb{Q}\left( \frac{Z_N(\beta_N; \bar{\omega})}{\mathbb{E}_{\mathbb{Q}}[Z_N(\beta_N; \bar{\omega})]} < \delta \right) \le \mathbb{Q}\big(Z_{\sqrt{2}\beta}(1, \cdot) \le \delta\big).
$$
Taking the limit as $\delta \downarrow 0$, the right-hand side converges to $\mathbb{Q}\big(Z_{\sqrt{2}\beta}(1, \cdot) \le 0\big)$. Therefore, verifying condition \eqref{eq:prop22_condition} is equivalent to proving that the Wiener chaos is strictly positive almost surely. 

It is a well-established fact that the continuous Wiener chaos $Z_{\sqrt{2}\beta}(t, x)$ coincides with the solution $u(t, x)$ to the following stochastic heat equation (SHE) with multiplicative fractional noise:
$$
\partial_t u = \frac{1}{2} \Delta u + \sqrt{2} \beta u \dot{W}, \quad u(0, x) = \delta_0(x),
$$
where the noise $\dot{W}$ possesses the spatial covariance structure $\mathbb{E}_{\mathbb{Q}}[\dot{W}(t,x) \dot{W}(s,y)] = \delta(t-s) |x-y|^{2H-2}$.

By applying Theorem 1.4 in \cite{Chen2016RegularityAS} (setting the parameters $\alpha = 2$, $\delta = 0$, and the function $\rho(u) = \sqrt{2} \beta u$), one establishes that the solution $u(t, x)$ to this SHE is strictly positive almost surely for all $t > 0$. Consequently, $\mathbb{Q}\big(Z_{\sqrt{2}\beta}(1, \cdot) \le 0\big) = 0$, which satisfies the condition required in \eqref{eq:prop22_condition}.

Having this positivity condition in hand, we apply Proposition \ref{truncated-error}:
$$
    \left| \ln Z_N(\beta_N; \omega) - \ln Z_N(\beta_N; \bar{\omega}) \right| \xrightarrow{\mathbb{Q}} 0.
$$
Applying the Slutsky's Theorem yielding
\begin{equation}
\label{eq:original_log_conv}
    \ln Z_N(\beta_N; \omega) - N\bar{\lambda}(\beta_N) \xrightarrow{d} \ln Z_{\sqrt{2}\beta}(1, \cdot).
\end{equation}

To complete the proof of Theorem 1.3(a), we expand the term
\(N\bar{\lambda}(\beta_N)\).  Set
\[
    \theta := \frac{2}{H}.
\]
Because \(H\in(1/2,1)\), we have \(\theta\in(2,4)\).  Moreover, since
\(\alpha > 3/H\), we have
\[
    \theta = \frac{2}{H} < \frac{3}{H} < \alpha.
\]
Also, recalling that \(r = \frac32 - H\), we obtain
\[
    r\theta
    =
    \left(\frac32 - H\right)\frac{2}{H}
    =
    \frac{3}{H} - 2
    >
    1.
\]
In the present regime \(\alpha > \frac{6}{3-2r} = \frac{3}{H}\),
the truncation is \(k_N=\beta_N^{-1}\), therefore the conditions for applying the Lemma \ref{lem:cgf_expansion} are satisfied. Hence it applies with
\[
    q=1,
    \qquad
    \theta=\frac{2}{H},
    \qquad
    p=\lfloor \theta \rfloor
    =
    \left\lfloor \frac{2}{H} \right\rfloor
    =
    p'.
\]

Since \(\mathbb{E}_{\mathbb{Q}}[\bar{\xi}]=0\), it gives
\[
    \bar{\lambda}(\beta_N)
    =
    \sum_{y=-\infty}^{\infty}
    \log\left(
    1+
    \sum_{j=2}^{p'}
    \frac{(\beta_N \psi_y)^j}{j!}
    \mathbb{E}_{\mathbb{Q}}[\bar{\xi}^j]
    \right)
    +
    o\!\left(\beta_N^{2/H}\right).
\]
Multiplying by \(N\), and using
\[
    N\beta_N^{2/H}
    =
    (\beta_N N^{H/2})^{2/H}
    \longrightarrow
    \beta^{2/H},
\]
we obtain
\[
    N\,o\!\left(\beta_N^{2/H}\right)=o(1).
\]
Therefore
\[
    N\bar{\lambda}(\beta_N)
    =
    N\sum_{y=-\infty}^{\infty}
    \log\left(
    1+
    \sum_{j=2}^{p'}
    \frac{(\beta_N \psi_y)^j}{j!}
    \mathbb{E}_{\mathbb{Q}}[\bar{\xi}^j]
    \right)
    +
    o(1).
\]

Now define
\[
    X_y
    :=
    \sum_{j=2}^{p'}
    \frac{(\beta_N \psi_y)^j}{j!}
    \mathbb{E}_{\mathbb{Q}}[\bar{\xi}^j].
\]
Since \(H>1/2\), we have \(2/H<4\), hence
\[
    p'=\left\lfloor \frac{2}{H}\right\rfloor \le 3.
\]
Because \(\sup_y \psi_y<\infty\) and \(\beta_N\to0\), it follows that for all
large \(N\),
\[
    |X_y|
    \le
    C\big(\beta_N^2\psi_y^2+\beta_N^3\psi_y^3\big)
    \le
    C\beta_N^2\psi_y^2.
\]
In particular, \(\sup_y |X_y|\to0\), and hence
\[
    \log(1+X_y)=X_y+\mathcal{O}(X_y^2)
\]
uniformly in \(y\).  Therefore
\[
    N\bar{\lambda}(\beta_N)
    =
    N\sum_{y=-\infty}^{\infty} X_y
    +
    N\sum_{y=-\infty}^{\infty}\mathcal{O}(X_y^2)
    +
    o(1).
\]
It remains to show that the quadratic correction is negligible.  Using the
bound above and \(\sum_y \psi_y^4<\infty\), we obtain
\[
    \sum_{y=-\infty}^{\infty} X_y^2
    \le
    C\beta_N^4 \sum_{y=-\infty}^{\infty}\psi_y^4
    \le
    C\beta_N^4.
\]
Hence, it
\[
    N\sum_{y=-\infty}^{\infty}\mathcal{O}(X_y^2)
    =
    \mathcal{O}(N\beta_N^4).
\]
Finally, since \(\beta_N N^{H/2}\to\beta\), we have
\[
    N\beta_N^4 = \mathcal{O}(N^{1-2H}) \to 0
\]
because \(H>1/2\).

Consequently, the quadratic logarithmic correction vanishes, and we conclude
that
\[
    N\bar{\lambda}(\beta_N)
    =
    N\sum_{y=-\infty}^{\infty} X_y + o(1)
    =
    N\sum_{j=2}^{p'}
    \frac{\beta_N^j}{j!}
    \mathbb{E}_{\mathbb{Q}}[\bar{\xi}^j]
    \sum_{y=-\infty}^{\infty}\psi_y^j
    +o(1),
\]
Substitute this explicit polynomial back into
\eqref{eq:original_log_conv}.

Finally, We briefly discuss the case where $\beta_N N^{H/2} \to 0$ as $N \to \infty$, under the same tail condition  $\alpha > \frac{6}{3-2r}$.

When \(\beta=0\) in \eqref{beta_N}, the preceding lemmas and theorems remain valid. The limiting behavior of the free energy in this regime can be intuitively seen from the modified partition function. Recall the modified partition function $\mathfrak{Z}_N(\beta_N; \bar{\omega}) = 1 + \sum_{k=1}^N \overline{S}_k^N$. If we normalize it by $\beta_N N^{H/2}$, the $L^2$ bounds established in Section 3 imply that the variance of the scaled $k$-th order chaos term behaves asymptotically as
\begin{equation}
    \mathbb{E}_{\mathbb{Q}}\left[ \left( \frac{\overline{S}_k^N}{\beta_N N^{H/2}} \right)^2 \right] \sim \mathcal{O}\left( (\beta_N N^{H/2})^{2k-2} \right).
\end{equation}
For $k = 1$, the variance converges to a positive constant $\sigma_H^2$. However, for all higher-order terms ($k \ge 2$), the factor $(\beta_N N^{H/2})^{2k-2}$ guarantees that their scaled variances vanish. Consequently, we can obtain 
\begin{equation}
    \frac{\mathfrak{Z}_N(\beta_N; \bar{\omega}) - 1}{\beta_N N^{H/2}} = \frac{\overline{S}_1^N}{\beta_N N^{H/2}} + o_{L^2}(1) \approx \frac{1}{N^{H/2}} \sum_{i=1}^N \sum_{x\in\mathbb{Z}} p_N(i,x) \bar{\omega}(i,x).
\end{equation}
Since the right-hand side is a linear combination of moving average environment variables, the Lindeberg--Feller theorem for triangular arrays implies its convergence in distribution to a Gaussian distribution $\mathcal{N}(0, \sigma_H^2)$. Here, the variance can be explicitly evaluated as follows:
\begin{equation}
    \sigma_H^2 = 2 \int_0^1 \int_{\mathbb{R}^2} \rho(t,x)\rho(t,y)K(x-y) \,dx \,dy \,dt.
\end{equation}

Combining this with Proposition~\ref{truncated-error},
we obtain
\[
        \frac{
        \log Z_N(\beta_N;\omega)
        -
        \log Z_N(\beta_N;\bar\omega)
        }
        {\beta_NN^{H/2}}
        \xrightarrow{\mathbb Q}0,
\]
provided that \(\beta_NN^{H/2}\gg N^{-a}\) for some
\(a<A_*(\alpha,H)\). Hence, it remains only to identify \(N\bar\lambda(\beta_N)\) at this scale.

By Lemma~\ref{lem:cgf_expansion}, applied with
\(\theta=2/H\), we have
\[
        N\bar\lambda(\beta_N)
        =
        N\sum_{j=2}^{\lfloor2/H\rfloor}
        \frac{\beta_N^j}{j!}
        \mathbb E_{\mathbb Q}[\bar\xi^j]
        \sum_{y\in\mathbb Z}\psi_y^j
        +o(\beta_NN^{H/2}).
\]
Finally, since
\[
        \frac{Z_N(\beta_N;\bar\omega)e^{-N\bar\lambda(\beta_N)}-1}
        {\beta_NN^{H/2}}
        \xrightarrow{d}\mathcal N(0,\sigma_H^2),
\]
and \(\beta_NN^{H/2}\to0\), Taylor's formula gives
\[
        \frac{
        \log Z_N(\beta_N;\bar\omega)-N\bar\lambda(\beta_N)}
        {\beta_NN^{H/2}}
        \xrightarrow{d}\mathcal N(0,\sigma_H^2).
\]\hfill $\square$

\section{proof of Theorem 1.4}
\begin{proof}
Put
\[
        a_N:=\beta_NN^{H/2}.
\]
By assumption \(a_N\to0\).  We first prove the fluctuation result for the
truncated normalized partition function and, at the same time, verify the
positive condition required in Proposition~\ref{truncated-error}.

Recall that
\[
        W_N
        :=
        Z_N(\beta_N;\bar\omega)e^{-N\bar\lambda(\beta_N)}
        =
        \mathbb E_{\mathbb P_N}
        \left[
        \prod_{i=1}^N(1+\beta_N\hat\omega(i,S_i))
        \right],
\]
where
\[
        \hat\omega(i,x)
        =
        \frac{e^{\beta_N\bar\omega(i,x)-\bar\lambda(\beta_N)}-1}{\beta_N}.
\]
Expanding the product gives the discrete chaos decomposition
\[
        W_N
        =
        1+\sum_{k=1}^N\beta_N^k\widehat S_{N,k},
\]
with
\[
        \widehat S_{N,k}
        =
        \sum_{\mathbf i\in\Delta_k^{(N)}}
        \sum_{\mathbf x\in\mathbb Z^k}
        p_N^k(\mathbf i,\mathbf x)
        \prod_{\ell=1}^k\hat\omega(i_\ell,x_\ell).
\]
The first-order term, divided by \(a_N\), is
\[
        \frac{\beta_N\widehat S_{N,1}}{a_N}
        =
        N^{-H/2}
        \sum_{i=1}^N\sum_{x\in\mathbb Z}
        p_N(i,x)\hat\omega(i,x).
\]

We first replace \(\hat\omega\) by the linear truncated environment
\(\bar\omega\).  By Lemma~\ref{lemma:env_covariance_bounds},
\[
        \left|
        \mathbb E_{\mathbb Q}
        [\Delta\hat\omega(i,x)\Delta\hat\omega(i,z)]
        \right|
        \le
        \varepsilon_N\gamma(x-z),
        \qquad
        \varepsilon_N\downarrow0,
\]
where \(\Delta\hat\omega=\hat\omega-\bar\omega\).  Therefore
\begin{equation}
\begin{aligned}
&\mathbb E_{\mathbb Q}
\left[
\left(
N^{-H/2}
\sum_{i=1}^N\sum_{x\in\mathbb Z}
p_N(i,x)\Delta\hat\omega(i,x)
\right)^2
\right]                                                     \\
&\quad\le
\varepsilon_N
N^{-H}
\sum_{i=1}^N
\sum_{x,z\in\mathbb Z}
p_N(i,x)p_N(i,z)\gamma(x-z)
\le
C\varepsilon_N
\longrightarrow0.
\end{aligned}
\label{L2_env_error}
\end{equation}

It remains to prove the central limit theorem for
\[
        \mathcal L_N
        :=
        N^{-H/2}
        \sum_{i=1}^N\sum_{x\in\mathbb Z}
        p_N(i,x)\bar\omega(i,x).
\]
Using
\[
        \bar\omega(i,x)
        =
        \sum_{y\in\mathbb Z}\psi_{y-x}\bar\xi(i,y),
\]
write
\[
        \mathcal L_N
        =
        \sum_{i=1}^N X_{N,i},
        \qquad
        X_{N,i}
        :=
        N^{-H/2}
        \sum_{y\in\mathbb Z}a_{N,i}(y)\bar\xi(i,y),
\]
where
\[
        a_{N,i}(y):=
        \sum_{x\in\mathbb Z}p_N(i,x)\psi_{y-x}.
\]
The variables \(X_{N,1},\ldots,X_{N,N}\) are independent and centered.  Also,
if \(\sigma_N^2:=\mathbb E_{\mathbb Q}[\bar\xi^2]\), then
\(\sigma_N^2\to1\) and
\[
\begin{aligned}
        \sum_{y\in\mathbb Z}a_{N,i}(y)^2
        &=
        \sum_{x,z\in\mathbb Z}
        p_N(i,x)p_N(i,z)
        \sum_{y\in\mathbb Z}\psi_{y-x}\psi_{y-z}              \\
        &=
        \sum_{x,z\in\mathbb Z}
        p_N(i,x)p_N(i,z)\gamma(x-z)
        \le
        C(1+i)^{H-1}.
\end{aligned}
\]
Consequently,
\[
\begin{aligned}
        \operatorname{Var}_{\mathbb Q}(\mathcal L_N)
        &=
        \sigma_N^2
        N^{-H}
        \sum_{i=1}^N
        \sum_{x,z\in\mathbb Z}
        p_N(i,x)p_N(i,z)\gamma(x-z)                           \\
        &\longrightarrow
        2\int_0^1\int_{\mathbb R^2}
        \rho(t,x)\rho(t,y)K(x-y)\,dx\,dy\,dt
        =
        \sigma_H^2,
\end{aligned}
\]
where \(K(x)=H(2H-1)|x|^{2H-2}\).

We verify Lyapunov's condition.  Choose \(q\in(2,\alpha)\).  Since
\(\mathbb E_{\mathbb Q}[|\bar\xi|^q]\) is uniformly bounded, Lemma~B.1 gives
\[
        \left\|
        \sum_y a_{N,i}(y)\bar\xi(i,y)
        \right\|_{L^q(\mathbb Q)}
        \le
        C_q
        \max\{\|a_{N,i}\|_2,\|a_{N,i}\|_q\}
        \le
        C_q\|a_{N,i}\|_2,
\]
since \(q>2\).  Hence
\[
\begin{aligned}
        \sum_{i=1}^N
        \mathbb E_{\mathbb Q}[|X_{N,i}|^q]
        &\le
        C_q
        N^{-qH/2}
        \sum_{i=1}^N(1+i)^{q(H-1)/2}                         \\
        &\le
        C_q
        \left(
        N^{-qH/2}
        +
        N^{1-q/2}
        \right)
        \longrightarrow0.
\end{aligned}
\]
Therefore the Lindeberg--Feller theorem for triangular arrays ( \cite{Durrett_2019}) yields
\[
        \mathcal L_N
        \xrightarrow{d}
        \mathcal N(0,\sigma_H^2).
\]
Combining this with \eqref{L2_env_error}, we obtain the same limit for the
first-order chaos term:
\[
        \frac{\beta_N\widehat S_{N,1}}{a_N}
        \xrightarrow{d}
        \mathcal N(0,\sigma_H^2).
\]

We next show that all higher-order terms vanish after division by
\(a_N\).  By the covariance bound
\[
        \left|
        \mathbb E_{\mathbb Q}
        [\hat\omega(i,x)\hat\omega(i,z)]
        \right|
        \le
        C\gamma(x-z)
\]
in Lemma~\ref{lemma:env_covariance_bounds} and the same computation as in
Section~3 gives,
\[
\begin{aligned}
        \mathbb E_{\mathbb Q}
        \left[
        \left(
        \frac1{a_N}
        \sum_{k=2}^N\beta_N^k\widehat S_{N,k}
        \right)^2
        \right]
        &\le
        C
        \sum_{k=2}^\infty
        a_N^{2k-2}
        \frac{C^k\Gamma(H)^k}{\Gamma(kH+1)}
        \longrightarrow0,
\end{aligned}
\]
Thus
\begin{equation}\label{eq:truncated-fluctuation-theorem14}
        \frac{
        Z_N(\beta_N;\bar\omega)e^{-N\bar\lambda(\beta_N)}-1
        }
        {a_N}
       \stackrel{d} {\longrightarrow}
        \mathcal N(0,\sigma_H^2).
\end{equation}

In particular,
\[
        Z_N(\beta_N;\bar\omega)e^{-N\bar\lambda(\beta_N)}
       \stackrel{\mathbb Q} {\longrightarrow}1.
\]
Since
\[
        \mathbb E_{\mathbb Q}[Z_N(\beta_N;\bar\omega)]
        =
        e^{N\bar\lambda(\beta_N)},
\]
this implies the positivity condition required by
Proposition~\ref{truncated-error}:
\[
        \lim_{\delta\downarrow0}\limsup_{N\to\infty}
        \mathbb Q
        \left(
        Z_N(\beta_N;\bar\omega)
        <
        \delta\,
        \mathbb E_{\mathbb Q}[Z_N(\beta_N;\bar\omega)]
        \right)
        =
        0.
\]
Therefore Proposition~\ref{truncated-error}, applied with
\(\tau_N=a_N\), gives
\[
        \frac{
        \log Z_N(\beta_N;\omega)
        -
        \log Z_N(\beta_N;\bar\omega)
        }
        {a_N}
        \stackrel{\mathbb Q}{\longrightarrow}0.
\]

We now pass from the normalized partition function to its logarithm.  From
\eqref{eq:truncated-fluctuation-theorem14},
\[
        Z_N(\beta_N;\bar\omega)e^{-N\bar\lambda(\beta_N)}-1
        =
        O_{\mathbb Q}(a_N).
\]
Since \(a_N=\beta_NN^{H/2}\to0\), Taylor's formula gives
\[
        \frac{
        \log Z_N(\beta_N;\bar\omega)-N\bar\lambda(\beta_N)
        }
        {a_N}
        -
        \frac{
        Z_N(\beta_N;\bar\omega)e^{-N\bar\lambda(\beta_N)}-1
        }
        {a_N}
         \stackrel{\mathbb Q}{\longrightarrow}0.
\]
Consequently,
\[
        \frac{
        \log Z_N(\beta_N;\bar\omega)-N\bar\lambda(\beta_N)
        }
        {a_N}
        \stackrel{d}{\longrightarrow}
        \mathcal N(0,\sigma_H^2).
\]

It remains to expand \(N\bar\lambda(\beta_N)\).
Let \(p'\) be the truncation order; namely,
\[
        1\le p'\le \left\lfloor\frac2H\right\rfloor,
        \qquad
        p'<\alpha,
        \qquad\text{and}\quad
        \frac{N\beta_N^{p'+1}}{a_N}\longrightarrow0.
\]

Choose \(\theta\in(2,\alpha)\) such that
\[
        \lfloor\theta\rfloor\ge p',
        \qquad
        r\theta>1,
        \qquad
        \frac{N\beta_N^\theta}{a_N}\longrightarrow0.
\]
This \(\theta\) exists, which makes Lemma \eqref{lem:cgf_expansion} applicable. Indeed, \(r>1/2\), so \(r\theta>1\) for every
\(\theta>2\).  The assumption
\(N\beta_N^{p'+1}/a_N\to0\) handles the case \(p'+1<\alpha\), while if
\(p'+1\ge\alpha\) one chooses \(\theta<\alpha\) sufficiently close to
\(\alpha\); then
\(
        \frac{N\beta_N^\theta}{a_N}\to0
\)
because \(\beta_N=\beta l(N^{3/2})^{-1}=N^{-3/(2\alpha)+o(1)}\) and
\[
        \frac{N\beta_N^\alpha}{a_N}
        =
        N^{1-H/2}\beta_N^{\alpha-1}
        =
        N^{(3/\alpha-H-1)/2+o(1)}
        \longrightarrow0,
\]
using \(\alpha>2\) and \(H>1/2\). 

Applying Lemma~\ref{lem:cgf_expansion} with \(q=1\) and 
the \(\theta\) chosen last paragraph, and using \(\mathbb E_{\mathbb Q}[\bar\xi]=0\), gives
\[
        \bar\lambda(\beta_N)
        =
        \sum_{y\in\mathbb Z}
        \log\left(
        1+
        \sum_{j=2}^{\lfloor\theta\rfloor}
        \frac{(\beta_N\psi_y)^j}{j!}
        \mathbb E_{\mathbb Q}[\bar\xi^j]
        \right)
        +
        o(\beta_N^\theta).
\]
After multiplication by \(N/a_N\), the last error is \(o(1)\).  As before,
if
\[
        Y_{N,y}:=
        \sum_{j=2}^{\lfloor\theta\rfloor}
        \frac{(\beta_N\psi_y)^j}{j!}
        \mathbb E_{\mathbb Q}[\bar\xi^j],
\]
then \(|Y_{N,y}|\le C\beta_N^2\psi_y^2\) and
\[
        \frac{N}{a_N}\sum_y Y_{N,y}^2
        \le
        C\frac{N\beta_N^4}{a_N}
        =
        C a_N^3N^{1-2H}
        \longrightarrow0.
\]
Thus the logarithm may be replaced by its linear part at the scale \(a_N\).
The terms with orders \(j>p'\) are negligible.  Indeed, for large \(N\),
\(\beta_N\le1\), while \(\sum_y|\psi_y|^j<\infty\) for every \(j\ge2\), and
therefore
\[
        \frac{N\beta_N^j}{a_N}\le
        \frac{N\beta_N^{p'+1}}{a_N}
        \longrightarrow0
        \qquad (j\ge p'+1).
\]
Therefore
\[
        \frac{
        N\bar\lambda(\beta_N)
        -
        N\sum_{j=2}^{p'}
        \frac{\beta_N^j}{j!}
        \mathbb E_{\mathbb Q}[\bar\xi^j]
        \sum_{y\in\mathbb Z}\psi_y^j
        }
        {a_N}
        \longrightarrow0.
\]

Consequently, we obtain
\[
        \frac{1}{\beta_NN^{H/2}}
        \left(
        \log Z_N(\beta_N;\omega)
        -
        N\sum_{j=2}^{p'}
        \frac{\beta_N^j}{j!}
        \mathbb E_{\mathbb Q}[\bar\xi^j]
        \sum_{y\in\mathbb Z}\psi_y^j
        \right)
        \xrightarrow{d}
        \mathcal N(0,\sigma_H^2).
\]
This completes the proof.
\end{proof}

\begin{appendices}
\section{Proof of Lemma 1.7}
\begin{proof}
Without loss of generality, we set \(x=0\).  Let
\[
        Z_y:=\xi(i,y)\mathbf 1_{\{|\xi(i,y)|\le k_N\}},
        \qquad
        c_N:=\mathbb E_{\mathbb Q}[Z_y],
\]
so that the centered truncated variable is
\[
        \bar\xi(i,y)=Z_y-c_N .
\]
Since the variables \(\{\xi(i,y)\}_{y\in\mathbb Z}\) are independent under
\(\mathbb Q\), the log-cumulant generating function factorizes as
\[
\begin{aligned}
\Lambda_N(q)
&=
\log\mathbb E_{\mathbb Q}
\left[
\exp\left\{
q\beta_N\sum_{y\in\mathbb Z}\psi_y\bar\xi(i,y)
\right\}
\right]                                                     \\
&=
\sum_{y\in\mathbb Z}
\log\mathbb E_{\mathbb Q}
\left[
e^{q\beta_N\psi_y\bar\xi(i,y)}
\right]                                                     \\
&=
\sum_{y\in\mathbb Z}
\log\left(
e^{-q\beta_N\psi_yc_N}
\mathbb E_{\mathbb Q}
\left[e^{q\beta_N\psi_y Z_y}\right]
\right).
\end{aligned}
\]
Put
\[
        \lambda_{N,y}:=q\beta_N\psi_y .
\]
Since \(q\) is fixed, \(\sup_y\psi_y<\infty\), and \(\beta_N\to0\), we have
\(\sup_y|\lambda_{N,y}|\to0\).  Let
\[
        \varphi_p(u):=e^u-\sum_{j=0}^p\frac{u^j}{j!}.
\]
We first estimate the Taylor remainder for the uncentered truncated variable.
We shall use the elementary bound
\[
        |\varphi_p(u)|\le C_p |u|^{p+1} e^{u_+},
        \qquad u\in\mathbb R,
\]
where \(u_+=\max\{u,0\}\).  Since \(|Z_y|\le k_N\), it gives the bound \(e^{|\lambda_{N,y}|k_N}\). Then we split at \(a|\lambda_{N,y}|^{-1}\), with \(a>0\) fixed.

On the region \(\{|Z_y|\le a|\lambda_{N,y}|^{-1}\}\), the usual Taylor
bound gives
\[
\begin{aligned}
&\mathbb E_{\mathbb Q}
\left[
|\varphi_p(\lambda_{N,y}Z_y)|
\mathbf 1_{\{|Z_y|\le a|\lambda_{N,y}|^{-1}\}}
\right]                                                     \\
&\qquad\le
C|\lambda_{N,y}|^{p+1}
\mathbb E_{\mathbb Q}
\left[
|\xi|^{p+1}
\mathbf 1_{\{|\xi|\le a|\lambda_{N,y}|^{-1}\}}
\right].
\end{aligned}
\]
By Karamata's theorem applied to the tail, and by the finiteness of moments below \(\alpha\), this
is bounded by
\[
        C\left(
        |\lambda_{N,y}|^{p+1}
        +
        |\lambda_{N,y}|^\alpha
        L(|\lambda_{N,y}|^{-1})
        \right).
\]
Here and below \(L\) may change from line to line, but is always slowly
varying. This unified bound covers both cases \(p+1<\alpha\) and
\(p+1\ge\alpha\).

On the complementary region
\(\{|Z_y|>a|\lambda_{N,y}|^{-1}\}\), the same remainder bound and
\(|Z_y|\le k_N\) yield
\[
\begin{aligned}
&\mathbb E_{\mathbb Q}
\left[
|\varphi_p(\lambda_{N,y}Z_y)|
\mathbf 1_{\{|Z_y|>a|\lambda_{N,y}|^{-1}\}}
\right]                                                     \\
&\qquad\le
C e^{|\lambda_{N,y}|k_N}
|\lambda_{N,y}|^{p+1}
\mathbb E_{\mathbb Q}
\left[
|\xi|^{p+1}
\mathbf 1_{\{a|\lambda_{N,y}|^{-1}<|\xi|\le k_N\}}
\right]                                                     \\
&\qquad\le
C e^{|\lambda_{N,y}|k_N}
\left(
|\lambda_{N,y}|^{p+1}
+
|\lambda_{N,y}|^\alpha
L(|\lambda_{N,y}|^{-1})
\right).
\end{aligned}
\]
Consequently, uniformly in \(y\),
\begin{equation}\label{eq:lem17-raw-remainder}
\left|
\mathbb E_{\mathbb Q}
\left[e^{\lambda_{N,y}Z_y}\right]
-
\sum_{j=0}^p
\frac{\lambda_{N,y}^j}{j!}
\mathbb E_{\mathbb Q}[Z_y^j]
\right|
\le
C e^{|\lambda_{N,y}|k_N}
\left(
|\lambda_{N,y}|^{p+1}
+
|\lambda_{N,y}|^\alpha L(|\lambda_{N,y}|^{-1})
\right).
\end{equation}

We now pass from \(Z_y\) to the centered variable \(\bar\xi=Z_y-c_N\).
Since \(p<\alpha\), all moments of \(Z_y\) up to order \(p\) are uniformly
bounded.  Also \(|c_N|\le \mathbb E_{\mathbb Q}[|\xi|\mathbf 1_{\{|\xi|\le
k_N\}}]\le C\).  Expanding \(e^{-\lambda_{N,y}c_N}\) up to order \(p\) and
using the Cauchy product of the two Taylor polynomials, the polynomial part is
exactly the Taylor polynomial in the centered moments:
\[
\sum_{j=0}^p
\frac{\lambda_{N,y}^j}{j!}
\mathbb E_{\mathbb Q}[(Z_y-c_N)^j].
\]
The terms lost by truncating the exponential
\(e^{-\lambda_{N,y}c_N}\) are \(O(|\lambda_{N,y}|^{p+1})\), and are absorbed
by the right-hand side of \eqref{eq:lem17-raw-remainder}.  Therefore
\begin{equation}\label{eq:lem17-centered-expansion}
\mathbb E_{\mathbb Q}
\left[e^{\lambda_{N,y}\bar\xi}\right]
=
1+
\sum_{j=1}^p
\frac{\lambda_{N,y}^j}{j!}
\mathbb E_{\mathbb Q}[\bar\xi^j]
+
E_{N,y},
\end{equation}
where
\begin{equation}\label{eq:lem17-local-error}
|E_{N,y}|
\le
C e^{|\lambda_{N,y}|k_N}
\left(
|\lambda_{N,y}|^{p+1}
+
|\lambda_{N,y}|^\alpha L(|\lambda_{N,y}|^{-1})
\right).
\end{equation}

Set
\[
        M_{N,y}:=
        \sum_{j=1}^p
        \frac{\lambda_{N,y}^j}{j!}
        \mathbb E_{\mathbb Q}[\bar\xi^j].
\]
Because \(\mathbb E_{\mathbb Q}[\bar\xi]=0\), the first non-zero term starts
at order two; in particular
\[
        \sup_y |M_{N,y}|\to0.
\]
Moreover, by \eqref{eq:lem17-local-error} and Potter bounds,
\[
\begin{aligned}
        \sup_y |E_{N,y}|
        &\le
        C e^{q\beta_Nk_N\sup_z\psi_z}
        \left(
        \beta_N^{p+1}
        +
        \beta_N^\alpha L(\beta_N^{-1})
        \right)
        \to0 .
\end{aligned}
\]
Hence, for all large \(N\), we can set
\[
        |1+M_{N,y}|\ge\frac12,
        \qquad
        |1+M_{N,y}+E_{N,y}|\ge\frac14,
        \qquad y\in\mathbb Z.
\]
Using the mean value theorem for the logarithm,
\[
\left|
\log(1+M_{N,y}+E_{N,y})
-
\log(1+M_{N,y})
\right|
\le C|E_{N,y}|.
\]
Therefore
\begin{equation}\label{eq:lem17-log-error-sum}
\begin{aligned}
&\left|
\Lambda_N(q)
-
\sum_{y\in\mathbb Z}
\log\left(
1+
\sum_{j=1}^p
\frac{(q\beta_N\psi_y)^j}{j!}
\mathbb E_{\mathbb Q}[\bar\xi^j]
\right)
\right|                                                     \\
&\qquad\le
C
\sum_{y\in\mathbb Z}
e^{q\beta_Nk_N\sup_z\psi_z}
\left[
(\beta_N\psi_y)^{p+1}
+
(\beta_N\psi_y)^\alpha
L\big((\beta_N\psi_y)^{-1}\big)
\right].
\end{aligned}
\end{equation}
We now make the last sum independent of \(y\).  Choose
\(\varepsilon>0\) so small that \(r(\alpha-\varepsilon)>1\).  Potter bounds
give, uniformly in \(0<\psi_y\le\sup_z\psi_z\),
\[
        L\big((\beta_N\psi_y)^{-1}\big)
        \le
        C_\varepsilon
        \psi_y^{-\varepsilon}L(\beta_N^{-1})
\]
for all large \(N\).  Since \(p+1>2\), \(\alpha-\varepsilon>1/r\), and
\(\psi_y\asymp |y|^{-r}\), we have
\[
        \sum_y \psi_y^{p+1}<\infty,
        \qquad
        \sum_y \psi_y^{\alpha-\varepsilon}<\infty .
\]
Thus \eqref{eq:lem17-log-error-sum} gives
\begin{equation}\label{eq:lem17-global-error}
\begin{aligned}
&\left|
\Lambda_N(q)
-
\sum_{y\in\mathbb Z}
\log\left(
1+
\sum_{j=1}^p
\frac{(q\beta_N\psi_y)^j}{j!}
\mathbb E_{\mathbb Q}[\bar\xi^j]
\right)
\right|                                                     \\
&\qquad\le
C
e^{q\beta_Nk_N\sup_z\psi_z}
\left(
\beta_N^{p+1}
+
\beta_N^\alpha L(\beta_N^{-1})
\right).
\end{aligned}
\end{equation}

It remains to verify that the right-hand side is \(o(\beta_N^\theta)\).
Since \(p=\lfloor\theta\rfloor\), we have \(p+1>\theta\), hence
\[
        \beta_N^{p+1}=o(\beta_N^\theta).
\]
For the regularly varying term, fix
\(\varepsilon\in(0,\alpha-\theta)\).  By Potter bounds,
\[
        \beta_N^\varepsilon L(\beta_N^{-1})\to0.
\]
Hence
\[
        \beta_N^\alpha L(\beta_N^{-1})
        =
        \beta_N^\theta
        \beta_N^{\alpha-\theta-\varepsilon}
        \big(\beta_N^\varepsilon L(\beta_N^{-1})\big).
\]
Therefore the desired conclusion follows once
\[
        \left(
        \beta_N^{p+1-\theta}
        +
        \beta_N^{\alpha-\theta-\varepsilon}
        \right)
        e^{q\beta_Nk_N\sup_z\psi_z}
        \to0
\]
for some such \(\varepsilon>0\). In the truncation \(k_N=\beta_N^{-1}\), the exponential
factor is bounded.  In the truncation,
\[
        k_N
        =
        \beta_N^{-1}
        \frac{l(N^{3/2}(\log N)^\eta)}{l(N^{3/2})},
        \qquad \eta\in(1,\alpha),
\]
and therefore, by regular variation of \(l\) with index \(1/\alpha\),
\[
        \beta_Nk_N
        =
        \frac{l(N^{3/2}(\log N)^\eta)}{l(N^{3/2})}
        =
        (\log N)^{\eta/\alpha+o(1)}.
\]
Since \(\eta/\alpha<1\),
\[
        \exp\{q\beta_Nk_N\sup_z\psi_z\}
        =
        \exp\{C(\log N)^{\eta/\alpha+o(1)}\}
        =
        N^{o(1)}.
\]
On the other hand,
\[
        \beta_N
        =
        \beta\,l(N^{3/2})^{-1}
        =
        N^{-3/(2\alpha)+o(1)}.
\]
Combining this with
\eqref{eq:lem17-global-error}, we obtain
\[
\frac1{\beta_N^\theta}
\left|
\Lambda_N(q)
-
\sum_{y\in\mathbb Z}
\log\left(
1+
\sum_{j=1}^p
\frac{(q\beta_N\psi_y)^j}{j!}
\mathbb E_{\mathbb Q}[\bar\xi^j]
\right)
\right|
\longrightarrow0 .
\]
This completes the proof.
\end{proof}

\begin{corollary}\label{cor:variance_estimate}
Recall the non-linear environment variable as $\hat{\omega}(i,x) := \frac{1}{\beta_N}\left(e^{\beta_N \bar{\omega}(i,x) - \bar{\lambda}(\beta_N)} - 1\right)$, where $\bar{\lambda}(\beta_N) = \Lambda_N(1)$. As $N \to \infty$, its variance converges asymptotically to the variance of the correlated environment:
$$
\mathbb{E}_{\mathbb{Q}}[\hat{\omega}(i,x)^2] \approx \bar{\gamma}(0).
$$
\end{corollary}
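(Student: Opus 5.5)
We use the product representation from the proof of Lemma \ref{lemma:env_covariance_bounds}. Fix $(i,x)$, and by stationarity take $x=0$. For each site $y$ set $s_y:=\beta_N\psi_y$ and $U_{s_y}:=e^{s_y\bar\xi(i,y)}/m_N(s_y)-1$, where $m_N(s)=\mathbb E_{\mathbb Q}[e^{s\bar\xi}]$. Because $\bar\lambda(\beta_N)=\Lambda_N(1)=\sum_y\log m_N(s_y)$, independence gives
\[
        e^{\beta_N\bar\omega(i,0)-\bar\lambda(\beta_N)}=\prod_{y}(1+U_{s_y}).
\]
This identity is first established for a finite set of $y$'s and then passed to the limit in $L^2(\mathbb Q)$. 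Expanding the product and using that the $U_{s_y}$ are centered and independent, we obtain the exact identity
\[
        \beta_N^2\,\mathbb E_{\mathbb Q}[\hat\omega(i,0)^2]
        =\prod_y\bigl(1+\mathbb E_{\mathbb Q}[U_{s_y}^2]\bigr)-1 .
\]
So the task is to estimate each one-site second moment $\mathbb E_{\mathbb Q}[U_s^2]=m_N(2s)/m_N(s)^2-1$ with precision $o(s^2)$, uniformly over $|s|\le C\beta_N$.

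For the one-site estimate we write $\ell_N=\log m_N$. Then $\mathbb E[U_s^2]=\exp\{\ell_N(2s)-2\ell_N(s)\}-1$, and
\[
        \ell_N(2s)-2\ell_N(s)=\int_0^s\!\!\int_0^s\ell_N''(u+v)\,du\,dv .
\]
Since $\ell_N''(0)=\mathbb E[\bar\xi^2]=\sigma_N^2$, we need the continuity estimate
\[
        \sup_{|u|\le 2C\beta_N}|\ell_N''(u)-\sigma_N^2|\to0 .
\]
To prove it, write $\ell_N''(u)$ as the variance of $\bar\xi$ under the tilted law $e^{u\bar\xi}/m_N(u)\,d\mathbb Q$. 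Then use the uniform bound \eqref{eq:local-exp-moment-heavy} on $\mathbb E[|\bar\xi|^\theta e^{|u||\bar\xi|}]$ with $\theta>2$, together with $|e^{u\bar\xi}-1|\le |u\bar\xi|^{\theta-2}e^{|u\bar\xi|}$. This gives differences of order $\beta_N^{\theta-2}$ in the first and second tilted moments. It follows that
\[
        \mathbb E[U_{s}^2]=s^2\sigma_N^2\bigl(1+O(\beta_N^{\theta-2})\bigr),
\]
uniformly over $|s|\le C\beta_N$.

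Summing over $y$, we get $\sum_y\mathbb E[U_{s_y}^2]=\beta_N^2\sigma_N^2\gamma(0)(1+o(1))$, since $\sum_y\psi_y^2=\gamma(0)$. This quantity tends to $0$, so
\[
        \prod_y\bigl(1+\mathbb E[U_{s_y}^2]\bigr)-1=\beta_N^2\sigma_N^2\gamma(0)\bigl(1+o(1)\bigr),
\]
using $e^a-1=a+O(a^2)$ and $\sum_ya_y\le\exp\{\sum a_y\}$. Dividing by $\beta_N^2$, and recalling $\bar\gamma(0)=\sigma_N^2\gamma(0)$ from Section 3, yields $\mathbb E_{\mathbb Q}[\hat\omega(i,x)^2]=\bar\gamma(0)(1+o(1))$. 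This is the claimed statement, and since $\sigma_N\to1$ it also equals $\gamma(0)+o(1)$.

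The main obstacle is the uniform continuity of $\ell_N''$ near $0$ in the heavy-tailed regime $2<\alpha\le3/H$. There $\beta_Nk_N$ grows like $(\log N)^{\eta/\alpha}$, so the tilted density is not uniformly bounded. I would control this exactly as in the proof of \eqref{eq:local-exp-moment-heavy}: split at $|\xi|\le\beta_N^{-1}$, and on the shell $\beta_N^{-1}<|\xi|\le k_N$ use the dyadic discretization, where the factor $N^{o(1)}$ is beaten by $\beta_N^{\alpha-\theta-\rho}$.
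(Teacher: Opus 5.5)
Your proof is correct, but it reaches the conclusion by a different route from the paper's. Both arguments start from the same exact identity: your $\prod_y(1+\mathbb E_{\mathbb Q}[U_{s_y}^2])$ is just $\exp\{\Lambda_N(2)-2\Lambda_N(1)\}$ written site by site.

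The paper then applies Lemma~\ref{lem:cgf_expansion} separately at $q=1$ and $q=2$ (with $p=2$), linearizes the logarithms, and subtracts the two expansions. You instead write the one-site second difference $\ell_N(2s)-2\ell_N(s)$ as a double integral of $\ell_N''$. You then show $\ell_N''\to\sigma_N^2$ uniformly on $|u|\le 2C\beta_N$, using the tilted-variance representation and \eqref{eq:local-exp-moment-heavy}.

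The paper's argument is shorter once Lemma~\ref{lem:cgf_expansion} is available. Yours avoids that lemma entirely; the regime-dependent input is concentrated in \eqref{eq:local-exp-moment-heavy}, which is already proved. It also cancels the $q$-dependence exactly at each site, rather than by subtracting two separate expansions. Finally, it yields an explicit uniform relative error $O(\beta_N^{\theta-2}+\beta_N^2)$, the same $\varepsilon_N$ as in Lemma~\ref{lemma:env_covariance_bounds}. In effect you sharpen \eqref{eq:one-site-U-bound} at $s=t$ from an upper bound into an asymptotic equality.

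The nonnegativity of $a_y:=\mathbb E_{\mathbb Q}[U_{s_y}^2]$ gives $\sum_y a_y\le\prod_y(1+a_y)-1\le e^{\sum_y a_y}-1$. This is presumably what your inequality $\sum_ya_y\le\exp\{\sum a_y\}$ was meant to say.

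Two small points.
\begin{itemize}
\item Choose $\theta\in(2,\min\{3,\alpha\})$. The inequality $|e^x-1|\le|x|^{\theta-2}e^{|x|}$ fails near $0$ when $\theta-2>1$, and Lemma~\ref{lemma:env_covariance_bounds} allows $\theta$ up to $4$.
\item Keep your direct $L^2(\mathbb Q)$ justification of the product identity rather than citing it from the proof of Lemma~\ref{lemma:env_covariance_bounds}. That proof in the paper itself invokes this corollary, so citing it would be circular. By contrast, \eqref{eq:local-exp-moment-heavy} is proved independently of the corollary and can be borrowed safely.
\end{itemize}
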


\begin{proof}
By definition, $\mathbb{E}_{\mathbb{Q}}[\hat{\omega}(i,x)] = 0$. The variance is exactly its second moment, which can be expressed via the cumulant generating function evaluated at $q=2$ and $q=1$:
$$
\mathbb{E}_{\mathbb{Q}}[\hat{\omega}^2] = \frac{1}{\beta_N^2} \left( \mathbb{E}_{\mathbb{Q}}[e^{2\beta_N \bar{\omega}(i,x)}] e^{-2\bar{\lambda}(\beta_N)} - 1 \right) = \frac{1}{\beta_N^2} \left( \exp\big(\Lambda_N(2) - 2\Lambda_N(1)\big) - 1 \right).
$$
Applying Lemma \ref{lem:cgf_expansion} with $p=2$ (since $\theta > 2$), and noting that the truncated noise is centered ($\mathbb{E}_{\mathbb{Q}}[\bar{\xi}] = 0$), the expansion reduces to:
$$
\Lambda_N(q) \approx \sum_{y=-\infty}^{\infty} \log \left( 1 + \frac{1}{2} q^2 \beta_N^2 \psi_y^2 \mathbb{E}_{\mathbb{Q}}[\bar{\xi}^2] \right) \approx \frac{q^2 \beta_N^2}{2} \sigma_{\bar{\xi}}^2 \sum_{y=-\infty}^{\infty} \psi_y^2.
$$
Substituting $q=2$ and $q=1$ into the exponent difference yields:
$$
\Lambda_N(2) - 2\Lambda_N(1) \approx \frac{4 \beta_N^2}{2} \sigma_{\bar{\xi}}^2 \sum_{y} \psi_y^2 - 2 \frac{\beta_N^2}{2} \sigma_{\bar{\xi}}^2 \sum_{y} \psi_y^2 = \beta_N^2 \sigma_{\bar{\xi}}^2 \sum_{y} \psi_y^2.
$$
Recalling the definition of the spatial covariance $\bar{\gamma}(x-z) = \sum_{y} \psi_{y-x}\psi_{y-z} \sigma_{\bar{\xi}}^2$, we identify $\bar{\gamma}(0) = \sigma_{\bar{\xi}}^2 \sum_{y} \psi_y^2$. Therefore,
$$
\mathbb{E}_{\mathbb{Q}}[\hat{\omega}^2] \approx \frac{1}{\beta_N^2} \left( \exp\big(\beta_N^2 \bar{\gamma}(0)\big) - 1 \right) \approx \bar{\gamma}(0),
$$
which concludes the verification.
\end{proof}
\section{Proof of Bound for Linear Environment Variable's Moments}

\begin{lemma} \label{lemma:rosenthal_linear}
Under conditions \textbf{(A)} and \textbf{(B)}, for any $p \ge 2$, there exists an absolute constant $K_p \le 14.7 \frac{p}{\ln p}$ such that the $L^p(\mathbb{Q})$-norm of the linear environment variable satisfies:
\begin{equation}
    \|\bar{\omega}(i,x)\|_p \le K_p \max \left\{ \left( \sum_{y=-\infty}^{\infty} \psi_y^2 \right)^{1/2}, \left( \sum_{y=-\infty}^{\infty} |\psi_y|^p \mathbb{E}_{\mathbb{Q}}[|\bar{\xi}|^p] \right)^{1/p} \right\}.
\end{equation}
\end{lemma}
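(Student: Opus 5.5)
The plan is to apply Rosenthal's inequality with the sharp constant of Johnson, Schechtman and Zinn (which gives $K_p\le 14.7\,p/\ln p$) to the finite partial sums $\bar\omega^{(M)}(i,x)=\sum_{|y-x|\le M}\psi_{y-x}\bar\xi(i,y)$, and then let $M\to\infty$. For fixed $i,x$ the summands $Y_y:=\psi_{y-x}\bar\xi(i,y)$ are independent and centered under $\mathbb Q$, because $\bar\xi$ is centered by construction. They also lie in $L^p$ for every $p$, since $|\bar\xi|\le 2k_N$. So Rosenthal's inequality for sums of independent mean-zero variables gives
\[
\Bigl\|\sum_{|y-x|\le M}Y_y\Bigr\|_p\le K_p\max\Bigl\{\Bigl(\sum_{|y-x|\le M}\mathbb E_{\mathbb Q}[Y_y^2]\Bigr)^{1/2},\Bigl(\sum_{|y-x|\le M}\mathbb E_{\mathbb Q}|Y_y|^p\Bigr)^{1/p}\Bigr\}.
\]

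Next I will identify the two terms. We have $\mathbb E_{\mathbb Q}[Y_y^2]=\psi_{y-x}^2\sigma_{\bar\xi}^2$, and $\sigma_{\bar\xi}^2=1-\bigl(\mathbb E_{\mathbb Q}[\xi^2\mathbbm 1_{\{|\xi|>k_N\}}]+(\mathbb E_{\mathbb Q}[\xi\mathbbm 1_{\{|\xi|>k_N\}}])^2\bigr)\le1$ by the identity used for \eqref{eq:covariance_gap}. Also $\mathbb E_{\mathbb Q}|Y_y|^p=|\psi_{y-x}|^p\,\mathbb E_{\mathbb Q}|\bar\xi|^p$. After the shift $y\mapsto y+x$, both truncated sums are bounded by the full sums over $\mathbb Z$ that appear in the statement. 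This gives the claimed bound for every $M$, uniformly.

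To pass to the limit, note that $\bar\omega^{(M)}(i,x)\to\bar\omega(i,x)$ in $L^2(\mathbb Q)$ because $\sum_y\psi_y^2<\infty$ (here $2r>1$). Along a subsequence the convergence is almost sure, so Fatou's lemma transfers the uniform $L^p$ bound to $\bar\omega(i,x)$. If $\sum_y|\psi_y|^p=\infty$, which can only happen when $p\le 1/r$, the right-hand side is infinite and there is nothing to prove. Otherwise it is finite, since $\psi_y\sim\lambda_r|y|^{-r}$ and $pr>1$ whenever $p\ge2$.

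The main obstacle is the constant. The explicit value $14.7\,p/\ln p$ comes from Johnson--Schechtman--Zinn's version of Rosenthal's inequality, which I will cite directly rather than reprove. The only other care needed is to apply it to finite sums before taking limits. Nothing in the argument depends on $N$ except through $\mathbb E_{\mathbb Q}|\bar\xi|^p$, which stays on the right-hand side.
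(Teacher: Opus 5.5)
Your proposal is correct. It uses the same underlying tool as the paper, but organizes the proof differently.

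\textbf{Your route.} You treat the Johnson--Schechtman--Zinn form of Rosenthal's inequality for independent mean-zero summands as a black box. Its constant is $7.35\,p/\ln p$ for symmetric variables, doubled to $14.7\,p/\ln p$ by the symmetrization remark after their Theorem 4.1, so cite that combined mean-zero version precisely. You apply it to the finite sums $\bar\omega^{(M)}(i,x)$. You then use $\sigma_{\bar\xi}^2\le1$ to replace $\sum_y\psi_y^2\sigma_{\bar\xi}^2$ by $\sum_y\psi_y^2$. Finally you let $M\to\infty$ via $L^2$ convergence, an a.s. subsequence and Fatou's lemma.

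\textbf{The paper's route.} The paper re-derives the inequality inside the proof.
\begin{itemize}
\item It symmetrizes and splits each summand at the level $\|\bar\omega\|_2$, using the events $A_y=\{|\psi_y\bar\xi_y|\ge\|\bar\omega\|_2\}$.
\item It bounds the bounded part through the Orlicz norm for $t\ln(1+t)$, via Prokhorov's arcsinh inequality.
\item It bounds the large part by the Rosenthal inequality for nonnegative summands with constant $2p/\ln p$, using $\bigl\|\sum_y|\psi_y\bar\xi_y|I_{A_y}\bigr\|_1\le\|\bar\omega\|_2$.
\item It then asserts that the resulting prefactor is at most $7.35\,p/\ln p$.
\end{itemize}

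\textbf{Comparison.} The paper's version shows where the order $p/\ln p$ comes from. The numerical constant, however, is still asserted ("one can verify") rather than computed, so it rests on JSZ just as yours does. Your version is shorter and is more careful on one point. The paper applies finite-sum inequalities directly to the infinite series $\sum_y\psi_y\bar\xi_y$ without comment, whereas your truncation-plus-Fatou step actually justifies this. Your side case $\sum_y|\psi_y|^p=\infty$ is vacuous, since $pr>1$ for every $p\ge2$, as you note yourself.
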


\begin{proof}
Without loss of generality, we set $x=0$ and denote the truncated noise as $\bar{\xi}_y := \bar{\xi}(i, y)$ for simplicity. The linear environment variable is then $\bar{\omega}(i,0) = \sum_{y=-\infty}^{\infty} \psi_y \bar{\xi}_y$. Since the sequence $\{\bar{\xi}_y\}$ consists of independent, centered random variables, we can apply the martingale techniques and moment inequalities developed by Johnson, Schechtman, and Zinn (1985) \cite{Johnson-inequality}.

By a standard symmetrization argument (see the Remark following Theorem 4.1 in \cite{Johnson-inequality}), we can assume for the moment that the variables $\bar{\xi}_y$ are symmetric, which ultimately introduces an additional factor of 2 to the final $L^p$ bound. Under this symmetry assumption, any symmetric truncation preserves the centered and orthogonal properties of the sequence.

We define a sequence of truncation events based on the $L^2$-norm of the linear environment to separate the extreme values from the bulk fluctuations:
$$
    A_y = \left\{ |\psi_y \bar{\xi}_y| \ge \|\bar{\omega}\|_2 \right\}, \quad y \in \mathbb{Z}.
$$
By the Minkowski inequality, we can decompose the $L^p$-norm of $\bar{\omega}$ into two parts:
\begin{equation} \label{eq:rosenthal_split}
    \|\bar{\omega}\|_p = \left\| \sum_{y=-\infty}^{\infty} \psi_y \bar{\xi}_y \right\|_p \le \left\| \sum_{y=-\infty}^{\infty} \psi_y \bar{\xi}_y I_{A_y^c} \right\|_p + \left\| \sum_{y=-\infty}^{\infty} \psi_y \bar{\xi}_y I_{A_y} \right\|_p.
\end{equation}

For the first term in \eqref{eq:rosenthal_split}, the variables are bounded. We utilize the Orlicz norm $\|\cdot\|_\psi$ associated with the function $\psi(t) = t \ln(1+t)$. Combining Prokhorov's arcsinh inequality with the properties of the Orlicz space (see Proposition 3.6 in \cite{Johnson-inequality}), we obtain:
$$
    \left\| \sum_{y=-\infty}^{\infty} \psi_y \bar{\xi}_y I_{A_y^c} \right\|_p \le e^{1/p} \psi^{-1}(p) \left\| \sum_{y=-\infty}^{\infty} \psi_y \bar{\xi}_y I_{A_y^c} \right\|_\psi.
$$
Further bounding the Orlicz norm using Corollary 3.5 from the same reference yields:
$$
    \left\| \sum_{y=-\infty}^{\infty} \psi_y \bar{\xi}_y I_{A_y^c} \right\|_\psi \le 2\left(\frac{4+e}{e}\right) \max \left\{ \left\| \sum_{y=-\infty}^{\infty} \psi_y \bar{\xi}_y I_{A_y^c} \right\|_2, \max_y \|\psi_y \bar{\xi}_y I_{A_y^c}\|_\infty \right\}.
$$
By the definition of the set $A_y^c$, the supremum norm is trivially bounded by $\|\bar{\omega}\|_2$. Additionally, because we established the symmetry assumption, the truncated variables $\bar{\xi}_y I_{A_y^c}$ remain centered and orthogonal. Consequently, the $L^2$-norm of the truncated sum is bounded by the $L^2$-norm of the full sum: $\left\| \sum_y \psi_y \bar{\xi}_y I_{A_y^c} \right\|_2 \le \|\bar{\omega}\|_2$. Thus, the first term is entirely controlled by the $L^2$-norm:
\begin{equation} \label{eq:term1_bound}
    \left\| \sum_{y=-\infty}^{\infty} \psi_y \bar{\xi}_y I_{A_y^c} \right\|_p \le 2\left(\frac{4+e}{e}\right) e^{1/p} \psi^{-1}(p) \|\bar{\omega}\|_2.
\end{equation}

For the second term in \eqref{eq:rosenthal_split}, we apply the Rosenthal-type inequality specifically derived for sums of independent non-negative random variables (Theorem 2.5 in \cite{Johnson-inequality}):
$$
    \left\| \sum_{y=-\infty}^{\infty} \psi_y \bar{\xi}_y I_{A_y} \right\|_p \le \left\| \sum_{y=-\infty}^{\infty} |\psi_y \bar{\xi}_y| I_{A_y} \right\|_p \le \frac{2p}{\ln p} \max \left\{ \left\| \sum_{y=-\infty}^{\infty} |\psi_y \bar{\xi}_y| I_{A_y} \right\|_1, \left( \sum_{y=-\infty}^{\infty} \|\psi_y \bar{\xi}_y I_{A_y}\|_p^p \right)^{1/p} \right\}.
$$
Notice that the $L^1$-norm can be bounded using the definition of the indicator function $I_{A_y}$:
$$
    \left\| \sum_{y=-\infty}^{\infty} |\psi_y \bar{\xi}_y| I_{A_y} \right\|_1 \le \sum_{y=-\infty}^{\infty} \mathbb{E}_{\mathbb{Q}} \left[ \frac{|\psi_y \bar{\xi}_y|^2}{\|\bar{\omega}\|_2} \right] = \frac{\sum_{y=-\infty}^{\infty} \mathbb{E}_{\mathbb{Q}}[|\psi_y \bar{\xi}_y|^2]}{\|\bar{\omega}\|_2} = \|\bar{\omega}\|_2.
$$
Therefore, the second term is bounded by:
\begin{equation} \label{eq:term2_bound}
    \left\| \sum_{y=-\infty}^{\infty} \psi_y \bar{\xi}_y I_{A_y} \right\|_p \le \frac{2p}{\ln p} \max \left\{ \|\bar{\omega}\|_2, \left( \sum_{y=-\infty}^{\infty} |\psi_y|^p \mathbb{E}_{\mathbb{Q}}[|\bar{\xi}_y|^p] \right)^{1/p} \right\}.
\end{equation}

Combining \eqref{eq:term1_bound} and \eqref{eq:term2_bound}, we extract the common maximums to obtain the global bound:
$$
    \|\bar{\omega}\|_p \le K_p \max \left\{ \|\bar{\omega}\|_2, \left( \sum_{y=-\infty}^{\infty} |\psi_y|^p \mathbb{E}_{\mathbb{Q}}[|\bar{\xi}_y|^p] \right)^{1/p} \right\},
$$
where the absolute prefactor is exactly $K_p = 2\left(\frac{4+e}{e}\right) e^{1/p} \psi^{-1}(p) + \frac{2p}{\ln p}$. By analyzing the asymptotic behavior of the inverse Orlicz function $\psi^{-1}(p)$ associated with $\psi(t) = t \ln(1+t)$, one can verify that this prefactor is bounded by $7.35 \frac{p}{\ln p}$ for symmetric variables. Incorporating the standard symmetrization factor of $2$ discussed earlier, we yield $K_p \le 14.7 \frac{p}{\ln p}$ for all $p \ge 2$. Finally, evaluating the variance gives $\|\bar{\omega}\|_2 = (\sum_y \psi_y^2 \mathbb{E}_{\mathbb{Q}}[\bar{\xi}^2])^{1/2} \le (\sum_y \psi_y^2)^{1/2}$ since $\mathbb{E}_{\mathbb{Q}}[\bar{\xi}^2] \le 1$. This completes the proof.
\end{proof}

\end{appendices}
\bibliographystyle{alpha}  
%\bibliography{references}  %%% Remove comment to use the external .bib file (using bibtex).
%%% and comment out the ``thebibliography'' section.

%%% Comment out this section when you \bibliography{references} is enabled.
\bibliography{references}

\end{document}